\numberwithin{equation}{section}
\newenvironment{bleu}
{\relax\color{blue}}
{\hspace*{.3ex}\relax}
\newcommand{\bev}{\begin{bleu}}
\newcommand{\ev}{\end{bleu}}
\newcommand{\beb}{\begin{bleu}}
\newcommand{\eb}{\end{bleu}}
\newenvironment{rouge}
{\relax\color{red}}
{\hspace*{.3ex}\relax}
\newcommand{\ber}{\begin{rouge}}
\newcommand{\er}{\end{rouge}}
\newcommand{\define}[1]{\emph{#1}}
\newcommand{\spa}{\vspace{0.5ex}\noindent}
\newcommand{\clos}[1]{\overline{#1}}%closure
\newcommand{\nc}{\newcommand}
\nc{\on}{\operatorname}
\newtheorem{theorem}{Theorem}[section]
\newtheorem{proposition}[theorem]{Proposition}
\newtheorem{lemma}[theorem]{Lemma}
\newtheorem{corollary}[theorem]{Corollary}
\newtheorem{erratum}[theorem]{Erratum}
\theoremstyle{definition}
\newtheorem{definition}[theorem]{Definition}
\newtheorem{notation}[theorem]{Notation}
\newtheorem{example}[theorem]{Example}
\newtheorem{remark}[theorem]{Remark}
\nc{\Prop}{\begin{proposition}}
\nc{\enprop}{\end{proposition}}
\nc{\Lemma}{\begin{lemma}}
\nc{\enlemma}{\end{lemma}}
\nc{\Cor}{\begin{corollary}}
\nc{\encor}{\end{corollary}}
\nc{\Def}{\begin{definition}}
\nc{\enDef}{\end{definition}}
\nc{\Th}{\begin{theorem}}
\nc{\entheorem}{\end{theorem}}
\newcommand{\C}{{\mathbb{C}}}
\newcommand{\N}{{\mathbb{N}}}
\newcommand{\R}{{\mathbb{R}}}
\newcommand{\Z}{{\mathbb{Z}}}
\nc{\forl}{[\mspace{-.3mu}[\hbar]\mspace{-.3mu}]}
\nc{\Ls}{(\mspace{-.3mu}(\hbar)\mspace{-.3mu})}
\newcommand{\cor}{{\bf k}}
\nc{\kor}[1][M]{\C_{#1}((\hbar))}
\nc{\koro}[1][M]{\C_{#1}[[\hbar]]}
\nc{\Dh}[1][M]{\shd_{#1}^\hbar}
\nc{\Dhh}[1][M]{\shd_{#1}\forl}
\nc{\Dhhl}[1][M]{\shd_{#1}\Ls}
\nc{\Dhhh}[1][M]{\mathscr{D}_{#1}[\hbar,\opb{\hbar}]}
\nc{\DA}[1][X]{\mathscr{D}^{\mathscr{A}}_{#1}}
\nc{\tim}{q}
\def\BBS{{\mathbb S}}
\def\BBV{\mathbb{V}}
\def\phi{{\varphi}}
\def\epsilon{\varepsilon}
\def\sha{\mathscr{A}}
\def\shb{\mathscr{B}}
\def\shd{\mathscr{D}}
\def\she{\mathscr{E}}
\def\shi{\mathscr{I}}
\def\shl{\mathscr{L}}
\def\shm{\mathscr{M}}
\def\sho{\mathscr{O}}
\newcommand{\ol}{\overline}
\newcommand{\bl}{\bigl(}
\newcommand{\br}{\bigr)}
\newcommand{\lp}{{\rm(}}
\newcommand{\rp}{{\rm)}}
\nc{\RR}{\mathrm{R}}
\nc{\LL}{\mathrm{L}}
\newcommand{\into}{\hookrightarrow}
\newcommand{\A}[1][X]{\mathscr{A}_{{#1}}}
\nc{\Dhl}[1][X]{\shd_{#1}\Ls}
\nc{\Dho}[1][X]{\mathscr{D}_{#1}^\hbar(0)}
\nc{\Ohl}[1][X]{\sho_{#1}\Ls}
\nc{\Oh}[1][X]{\sho_{#1}^\hbar}
\nc{\OOh}[1][X]{\sho_{#1}[[\hbar]]}
\newcommand{\HWo}[1][X]{\widehat{\mathscr{W}}_{#1}(0)}
\newcommand{\gr}{\mathrm{gr}_\hbar}
\newcommand{\Lie}[1][]{\operatorname{\mathsf{L}}\def\temp{#1}
\ifx\temp\empty\else^{(#1)}\fi}
\newcommand{\stkHom}[1][]{\mathfrak{Hom}_{\raise1.5ex\hbox to.1em{}#1}}
\newcommand{\Int}{{\rm Int}}
\newcommand{\loc}{{\rm loc}}
\nc{\oA}[1][X]{\omega_{#1}^{\,\mathscr{A}}}
\nc{\oo}[1][X]{\omega_{#1}}
\renewcommand{\to}[1][]{\xrightarrow[]{#1}}
\newcommand{\isoto}[1][]{\xrightarrow[#1]%
{{\raisebox{-.6ex}[0ex][-.6ex]{$\mspace{1mu}\sim\mspace{2mu}$}}}}
\newcommand{\To}[1][]{\xrightarrow[]{\mspace{10mu}{#1}\mspace{10mu}}}
\newcommand{\Hom}[1][]{\mathrm{Hom}_{\raise1.5ex\hbox to.1em{}#1}}
\newcommand{\RHom}[1][]{\RR\mathrm{Hom}_{\raise1.5ex\hbox to.1em{}#1}}
\newcommand{\Ext}[2][]{\mathrm{Ext}_{\raise1.5ex\hbox to.1em{}#1}^{#2}}
\renewcommand{\hom}[1][]{{\mathscr{H}\mspace{-4mu}om}_{\raise1.5ex\hbox to.1em{}#1}}
\newcommand{\rhom}[1][]{{\RR\mathscr{H}\mspace{-3mu}om}_{\raise1.5ex\hbox to.1em{}#1}}
\newcommand{\ext}[2][]{{\mathscr{E}\mspace{-2mu}xt}_{%
\raise1.5ex\hbox to.1em{}#1}^{#2}}
\newcommand{\BHom}[1][]{\mathrm{Bhom}_{\raise1.5ex\hbox to.1em{}#1}}
\newcommand{\Tens}[1][]{\mathbin{\otimes_{\raise1.5ex\hbox to-.1em{}{#1}}}}
\newcommand{\LTens}[1][]{\mathbin{\otimes_{\raise1.5ex\hbox to-.1em{}#1}^{L}}}
\newcommand{\Tor}[2][]{\mathrm{Tor}^{\raise1.5ex\hbox to.1em{}#1}_{#2}}
\newcommand{\tens}[1][]{\mathbin{\otimes_{\raise1.5ex\hbox to-.1em{}{#1}}}}
\newcommand{\dtens}[1][]%
{{\overset{\mathrm{L}}{\underline{\otimes}}}_{#1}}
\nc{\aut}{{\sha\mspace{-1mu}\mit{ut}}\,}
\newcommand{\shend}{\operatorname{{\she\mspace{-2mu}\mathit{nd}}}}
\newcommand{\Endo}[1][]{\mathrm{End}_{\raise1.5ex\hbox to.1em{}#1}}
\newcommand{\sendo}[1][]{{\shend}_{\raise1.5ex\hbox to.1em{}#1}}
\newcommand{\Aut}[1][]{\mathrm{Aut}_{\raise1.5ex\hbox to.1em{}#1}}
\newcommand{\sect}{\Gamma}
\newcommand{\rsect}{\mathrm{R}\Gamma}
\newcommand{\Rb}{{\rm b}}
\newcommand{\grad}{{\rm grad}}
\newcommand{\SSi}{\on{SS}}
\newcommand{\roim}[1]{\RR{#1}_*}
\newcommand{\reim}[1]{\RR{#1}_!}
\newcommand{\opb}[1]{#1^{-1}}
\newcommand{\epb}[1]{#1^{!}}
\newcommand{\tw}[1]{\widetilde{#1}}
\DeclareMathOperator{\supp}{supp}
\DeclareMathOperator{\ori}{or}
\DeclareMathOperator{\chv}{char}
\DeclareMathOperator{\hchv}{hypchar}
\newcommand{\Pro}{\mathrm{Pro}}
\newcommand{\eqdot}{\mathbin{:=}}
\newcommand{\scdot}{\,\cdot\,}
\newcommand{\cl}{\colon}
\newcommand{\scbul}{{\,\raise.4ex\hbox{$\scriptscriptstyle\bullet$}\,}}
\newcommand{\ba}{\begin{array}}
\newcommand{\ea}{\end{array}}
\newcommand{\bnum}{\begin{enumerate}[{\rm(i)}]}
\newcommand{\enum}{\end{enumerate}}
\newcommand{\banum}{\begin{enumerate}[{\rm(a)}]}
\newcommand{\eanum}{\end{enumerate}}
\newcommand{\eq}{\begin{eqnarray}}
\newcommand{\eneq}{\end{eqnarray}}
\newcommand{\eqn}{\begin{eqnarray*}}
\newcommand{\eneqn}{\end{eqnarray*}}
\nc{\Der}{\on{D}}
\nc{\Derb}{\mathrm{D}^{\mathrm{b}}}
\nc{\hs}{\hspace*}
\nc{\Supp}{\on{Supp}}
\nc{\tr}{\on{tr}}
\newcommand{\HHWo}[1][X]{\mathcal{HH}(\HWo[])}
\newcommand{\RD}{{\rm D}}
\nc{\RDAA}[1][X]{\mathrm{D}^\prime_{\mathscr{A}_{#1}}}
\nc{\RDA}{\mathrm{D}^\prime_{\mathscr{A}}}
\nc{\RDAh}{\mathrm{D}^\prime_{\mathscr{A}^{\rm loc}}}
\nc{\RDO}{\mathrm{D}^\prime_{\mathscr{O}}}
\nc{\RDOl}{\mathrm{D}^\prime_{\mathscr{O}^{\hbar,\rm loc}}}
\nc{\RDDO}{\mathrm{D}_{\mathscr{O}}}
\nc{\RDDA}{\mathrm{D}_{\mathscr{A}}}
\nc{\RDD}{\mathrm{D}^\prime}
\nc{\conv}[1][]{\mathop{\circ}\limits_{#1}}
\newcommand{\aconv}[1][]{\mathop{\circ}\limits^{a}\limits_{#1}}
\nc{\sconv}[1][]{\mathop{\ast}\limits_{#1}}
\nc{\ssum}{\mathop{\mbox{\normalsize$\sum$}}}
\nc{\de}[1][X]{\delta_{#1}} %diagonal embedding
\nc{\vs}{\vspace}
\nc{\dA}[1][X]{{{\delta}_*\mspace{1mu}\mathscr{A}_{{#1}}}}
\nc{\dO}[1][X]{{{\delta_{{#1}}}_*\mspace{1mu}\mathscr{O}_{{#1}}}}
\nc{\dGA}[1][X]{\gr(\mathscr{C}_{{#1}})}
\nc{\OS}[1][X]{\sho_{#1}}
\nc{\soplus}{\mathop{\text{\scriptsize\raisebox{.5ex}{$\displaystyle\bigoplus$}}}}
\nc{\Inv}{\on{Inv}}
\nc{\stkInv}{\mathfrak{Inv}}
\nc{\bwr}{{\mbox{\large$\wr$}}}
\nc{\be}{\begin{enumerate}}
\nc{\ee}{\end{enumerate}}
\nc{\stan}{\mathrm{stan}}
\nc{\Db}{\RD^\Rb}
\nc{\pt}{\mathrm{pt}}
\nc{\BBD}{\mathbb{D}}
\nc{\rC}{\mathrm{C}}
\nc{\scup}{\mathop{\text{\scriptsize\raisebox{.5ex}{$\displaystyle\bigcup$}}}}
\nc{\tX}{{\widetilde{X}}}
\nc{\AL}{\A[\Lambda/X]}
\nc{\ALa}{\A[\Lambda^a]}
\nc{\Gr}{\on{Gr}}
\nc{\CL}{\on{\mathrm{C}_\Lambda}}
\nc{\codim}{\on{codim}}
\nc{\Chl}{\on{char_{\Lambda}}}
\nc{\Chlo}{\on{char_{\Lambda_0}}}
\nc{\ChM}{\on{char_{M}}}
\nc{\DL}{\shd_\shl}
\nc{\DLl}{\shd_\shl^\loc}
\nc{\rmC}{\rm C}
\nc{\Zh}{\Z\forl}
\nc{\PZ}{\Pro(\Zh)}
\nc{\coco}{{cohomologically complete}}
\nc{\rpi}{{{\rm R}\pi}}
\nc{\shal}{{\sha^\loc}}
\DeclareMathOperator{\pinto}{\Int_{\rm pw}}
\DeclareMathOperator{\pclo}{\rm{cl_{pw}}}
\newcommand{\pint}[1]{\pinto\left(#1\right)}
\newcommand{\pcl}[1]{\pclo\left(#1\right)}
\newcommand{\ccirc}{{\circ\circ}}
\newcommand{\pasii}[1]{J^-_{\preceq_i}{(#1)}}
\newcommand{\gDel}{\Delta_\gamma}
\newcommand{\pas}[1]{J^-_\gamma{(#1)}}
\newcommand{\preceql}{\mathop{\preceq_\gamma}}
\newcommand{\futai}[1]{I^+_{\gamma}{(#1)}}
\newcommand{\pasai}[1]{I^-_{\gamma}{(#1)}}
\newcommand{\futo}[1]{J^+_\preceq{(#1)}}
\newcommand{\paso}[1]{J^-_\preceq{(#1)}}
\newcommand{\futoa}[1]{J^+_{\preceq^a}{(#1)}}
\newcommand{\futi}[1]{I^+_\gamma{(#1)}}
\newcommand{\futps}[1]{J^+_{\rm ps}{(#1)}}
\newcommand{\pasps}[1]{J^-_{\rm ps}{(#1)}}
\newcommand{\preceqps}{\mathop{\preceq}_{\rm ps}}
\newcommand{\gDelps}{\Delta_{\rm ps}}
\newcommand{\gDelccc}{\Delta_{\rm cc}}
\newcommand{\futccc}[1]{J^+_{\rm cc}{(#1)}}
\newcommand{\pasccc}[1]{J^-_{\rm cc}{(#1)}}
\newcommand{\preceqccc}{\mathop{\preceq}_{\rm cc}}
\newcommand{\gPo}{\Delta_{\preceq}}
\newcommand{\gPoa}{\Delta_{\preceq^a}}
\newcommand{\sq}{\sqrt{-1}}
\begin{document}

\title{Sheaves and D-modules on Lorentzian manifolds}
\author{Beno\^it Jubin and Pierre Schapira%
\footnote{Key words: Lorentzian manifolds, microlocal sheaf theory, hyperbolic $\shd$-modules}
\footnote{MSC: 35A27, 58J15, 58J45, 81T20}}
\maketitle

\begin{abstract}
We introduce a class of causal manifolds which contains the globally hyperbolic spacetimes and we prove global propagation theorems for sheaves on such manifolds.
As an application, we solve globally the Cauchy problem for hyperfunction solutions of hyperbolic systems.
\end{abstract}

\tableofcontents

\section*{Introduction}

A causal manifold $(M,\gamma)$ is a real smooth manifold $M$ endowed with an everywhere nonempty open convex cone $\gamma$ in its tangent bundle $TM$.
The main examples of such manifolds are provided by Lorentzian spacetimes, that is, time-oriented Lorentzian manifolds.
Lorentzian spacetimes and the properties of their causal preorders are important objects of study in the mathematics of general relativity.
A natural problem in this field is to solve {\em globally} the Cauchy problem for the wave operator or for related operators, with initial data on a Cauchy hypersurface.
This problem has been and still is the object of an intense activity in mathematical physics (see for example~\cites{Ge70,HE73,BF00,BGP07,MS08,BF11}).
It was initiated in the pioneering work of Jean Leray~\cite{Le53}.

In {\bf Section~1} of this paper, using the tools of the Whitney normal cone and related notions as in~\cite{KS90} (see the appendix), we introduce the notion of a $\gamma$-set in $M$.
The family of $\gamma$-sets is stable by union and intersection, which allows us to define causal preorders on a causal manifold and in particular the cc-preorder, the finest closed causal preorder.
We then define a {\em Cauchy time function} $\tim\cl M\to\R$ (see Definition~\ref{def:Gcausal}) as a submersive causal map which is proper on the future and the past (for the cc-preorder) of any point.
The cc-preorder on a causal manifold with a Cauchy time function is proper (causal diamonds are compact).
By theorems of Geroch and Bernal--S\'anchez (\cites{Ge70,BS05}), globally hyperbolic Lorentzian spacetimes may be endowed with Cauchy times functions.
However, the situation here is more general: the cone $\gamma$ need not have a smooth boundary nor be quadratic, and a causal manifold with a Cauchy time function may have causal loops.
A triple $(M,\gamma,\tim)$ composed of a causal manifold endowed with a Cauchy time function is called here a G-causal manifold, by reference to Geroch.

In {\bf Section~2}, we apply the microlocal theory of sheaves of \cite{KS90} to causal manifolds and obtain propagation results for sheaves on G-causal manifolds.
More precisely, denote by $T^*M$ the cotangent bundle to $M$, by $\gamma^{\circ a} = -\gamma^\circ$ the opposite polar cone of $\gamma$, and by $T^*_MM$ the zero-section of $T^*M$.
Choose a field $\cor$ and consider an object $F$ of the bounded derived category $\Derb(\cor_M)$ of sheaves of $\cor$-modules on $M$.
Denote as usual by $\SSi(F)$ its microsupport, a closed conic co-isotropic subset of $T^*M$.
We prove here the following results (see Theorem~\ref{th:GglobalCP1}).

(1)
If $\SSi(F)\cap\gamma^{\circ a}\subset T^*_MM$, then for any compact set $K$ the restriction morphism
\eq\label{eq:intro1}
&&\rsect(M;F)\To\rsect(M\setminus\pasccc{K};F)
\eneq
is an isomorphism.
In other words, any ``section'' of $F$ on $M\setminus\pasccc{K}$ extends uniquely to $M$.

(2)
If $\SSi(F)\cap(\gamma^\circ \cup \gamma^{\circ a})\subset T^*_MM$, then setting $L = \opb{q}(0)$, the restriction morphism
\eq\label{eq:intro2}
&&\rsect(M;F)\To\rsect(L;F\vert_L)
\eneq
is an isomorphism.
In other words, any ``section'' of $F$ defined in a neighborhood of a Cauchy hypersurface $L$ extends uniquely to the whole of $M$.
In fact, we prove a more general result when replacing the Cauchy time function $q \colon M \to \R$ with a submersive morphism $f \colon M \to N$, the manifold $N$ being contractible (see Corollary~\ref{cor:GP3}).

Both isomorphisms~\eqref{eq:intro1} and~\eqref{eq:intro2} are easy consequences of Theorem~\ref{th:GP1} below which gives a bound to the microsupport of direct images in a non-proper situation.

In {\bf Section~3}, we apply the preceding results to the case where $M$ is real analytic and $F$ is the complex of hyperfunction (or analytic) solutions of a hyperbolic system $\shm$.
In~\cite{Sc13} (based on~\cite{KS90}), it is explained how the microlocal theory of sheaves allows one to solve the Cauchy problem and to give domains of propagation for the hyperfunction solutions of a linear hyperbolic system.
We translate these results here in the case of causal manifolds with Cauchy time functions and show that the Cauchy problem may be solved {\em globally}.
Note that the notion of hyperbolicity used here relies only on the characteristic variety of the system.
In case of a linear equation $Pu=v$, it corresponds to ``weakly hyperbolic'' in the old terminology.
Indeed, the framework of hyperfunctions is much more flexible than that of distributions as far as one wants to solve the Cauchy problem or to study analytic propagation.
We end this section with several detailed examples:
(1) in the case of a product of $\R$ (the ``time'') with a compact manifold, we give general sufficient conditions on a differential operator for the Cauchy problem to be globally well-posed;
(2) we extend these results to the case of a product of $\R$ with a complete Riemannian manifold;
(3) we give a propagation result for operators on a product of $\C$ (the ``complex time'') with a compact manifold;
(4) in the general case of a globally hyperbolic Lorentzian spacetime, we solve globally the Cauchy problem for operators of wave type.

\begin{erratum}\label{err:1}
The statement~\cite{DS98}*{Prop.~4.4~(ii)} is not correct \lp see Example~\ref{exa:DS}\rp.
Therefore,~\cite{Sc13}*{Prop.~6.6} as well as its corollaries are not correct.
However, and that is what we shall show in this paper, most of the applications to causal manifolds are correct when assuming the spacetime globally hyperbolic.
\end{erratum}

\section{Causal manifolds}

\subsection{Notation and background}\label{subsect:not}The statements  of this subsection are all elementary and well-known. We recall them  to fix some notations.

Unless otherwise specified, a manifold means a real $C^\infty$-manifold and a morphism of manifolds $f\cl M\to N$ is a map of class $C^\infty$.

Let $M$ be a manifold.
For any subset $A\subset M$, we denote by $\ol A$ its closure, by $\Int(A)$ its interior and we set $\partial A = \ol{A} \setminus \Int(A)$.

\subsubsection*{Cones  in vector bundles}
Recall that in a real  finite-dimensional vector space $\BBV$ a cone is \define{proper}   if its convex hull does not contain any nonzero linear subspace.
The dual (see~\eqref{eq:dualcone} below) of a cone is a convex closed cone.
The dual of a cone with nonempty interior is proper.

Let $p\cl E\to M$ be a real (finite-dimensional) vector bundle over $M$.
As usual, one denotes by $a\cl E\to E$ the antipodal map, $(x;\xi)\mapsto(x;-\xi)$.
A subset $\gamma$ of $E$ is \define{conic} (or is a cone) if it is invariant by the action of $\R_{>0}$, that is, $\gamma_x\subset E_x$ is a cone for each $x\in M$.
Here, $\gamma_x$ is the restriction of $\gamma$ to the fibre $E_x$.
If $\gamma$ is closed and conic, then its projection by $p$ on $M$ coincides with its intersection with the zero-section of $E$ and we identify the zero-section of $E$ with $M$.

Let $\gamma$ be a cone in $E$.
We denote by $\gamma^\circ$ the polar (or dual) cone in $E^*$ and by $\gamma^{a}$ the image of $\gamma$ by the antipodal map $a$:
\eq
&&\gamma^\circ=\{(x;\xi)\in E^*;\langle\xi,v\rangle\geq0\mbox{ for all }v\in\gamma_x\},\label{eq:dualcone}\\
&&\gamma^{a}=a(\gamma).\label{eq:opcone}
\eneq
For a cone $\gamma$ in a vector bundle $E$, one has to distinguish between its closure $\ol{\gamma}$ and its pointwise closure $\pcl{\gamma}$.
Similarly, one has to distinguish between its interior $\Int(\gamma)$ and its pointwise interior $\pint{\gamma}$.
One has
\eqn
&&\Int(\gamma) \subset \pint{\gamma} \subset \gamma \subset \pcl{\gamma} \subset \ol{\gamma}
\eneqn
and all inclusions may be strict.

\begin{example}\label{exa:cones}
Consider two nonempty open convex cones $\gamma_0^1\subsetneq\gamma_0^2$ in a vector space $\BBV$ and two nonempty open subsets $U\subsetneq V\subset\BBV$ and define
\eqn
&&\gamma=V\times\gamma_0^1\cup U\times\gamma_0^2.
\eneqn
Then, for $x\in V\cap\partial U$, one has $\ol{\gamma_x} = \ol{\gamma_0^1}$ and $\ol{\gamma}_x = \ol{\gamma_0^2}$.
Therefore, $\gamma^\ccirc = \pcl{\gamma}$ is not closed.
\end{example}

\begin{lemma}\label{le:lambdaclosed}
Let $p\cl E\to M$ be a real finite-dimensional vector bundle over $M$ and let $\gamma$ be an open convex cone.
Then $\gamma^\circ$ is a closed convex cone, $\gamma = \Int(\pcl{\gamma}) = \Int({\gamma^\ccirc})$.
\end{lemma}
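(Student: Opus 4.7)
My plan is to handle the three claims separately and to reduce the only nontrivial one, $\gamma = \Int(\pcl{\gamma})$, to a single-fibre separating-hyperplane argument.

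The assertion that $\gamma^\circ$ is a closed convex cone follows in one line from~\eqref{eq:dualcone}: $\gamma^\circ$ is the intersection in $E^*$ of a family of closed half-spaces indexed by the elements of $\gamma$.

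For the identification $\pcl{\gamma} = \gamma^{\ccirc}$, both the polar operation $(\cdot)^\circ$ and the pointwise closure $\pcl{\cdot}$ are defined fibrewise, so it is enough to check $(\gamma_x)^{\circ\circ} = \overline{\gamma_x}$ in each fibre $E_x$; this is the classical bipolar theorem applied to the open convex cone $\gamma_x$. In particular $\Int(\pcl{\gamma}) = \Int(\gamma^{\ccirc})$.

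The core of the lemma is the equality $\gamma = \Int(\pcl{\gamma})$. One inclusion is obvious: $\gamma$ is open in $E$ and contained in $\pcl{\gamma}$, so $\gamma \subseteq \Int(\pcl{\gamma})$. For the reverse inclusion I would argue by contradiction. Assume $(x_0, v_0) \in \Int(\pcl{\gamma})$ with $v_0 \notin \gamma_{x_0}$. Since $v_0 \in \overline{\gamma_{x_0}} \setminus \gamma_{x_0}$ and $\gamma_{x_0}$ is open and convex in the vector space $E_{x_0}$, the point $v_0$ lies on the boundary of $\gamma_{x_0}$. The supporting-hyperplane theorem then produces a nonzero $\xi_0 \in E_{x_0}^*$ with $\langle \xi_0, v \rangle \geq 0$ for every $v \in \overline{\gamma_{x_0}}$ and $\langle \xi_0, v_0 \rangle = 0$. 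Choose any $w \in E_{x_0}$ with $\langle \xi_0, w \rangle > 0$. Then $v_0 - tw$ has strictly negative $\xi_0$-value, so $v_0 - tw \notin \overline{\gamma_{x_0}} = (\pcl{\gamma})_{x_0}$ for every $t > 0$. On the other hand, the path $t \mapsto (x_0, v_0 - tw)$ stays inside the single fibre $E_{x_0}$ and tends to $(x_0, v_0)$ as $t \to 0^+$, so for $t$ small enough it lies in any prescribed neighbourhood of $(x_0, v_0)$ in $E$; this contradicts the fact that some such neighbourhood is contained in $\pcl{\gamma}$.

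The one delicate point is the last step: in view of Example~\ref{exa:cones}, the pointwise closure $\pcl{\gamma}$ can jump wildly between fibres, so perturbing the base point $x_0$ would give no usable information. The trick is to keep $x_0$ fixed and perturb only inside $E_{x_0}$, so that the variation of $\overline{\gamma_x}$ with $x$ plays no role and the contradiction comes purely from the single-fibre separation furnished by $\xi_0$.
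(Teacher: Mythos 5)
Your argument for $\gamma=\Int(\pcl{\gamma})$ is correct and matches the spirit of the paper's: the paper simply quotes the fact that an open convex set in a vector space equals the interior of its closure, and you re-derive that fact via a supporting hyperplane, while correctly noting that the perturbation must stay inside a single fibre. But the first two steps of your proposal contain genuine errors.

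The one-line argument for closedness of $\gamma^\circ$ fails in the vector bundle setting. For a fixed $(x_0,v_0)\in\gamma$, the constraint $\langle\xi,v_0\rangle\ge0$ in~\eqref{eq:dualcone} applies only to covectors over $x_0$; the corresponding subset of $E^*$ is $\{(x,\xi);x\ne x_0\}\cup\{(x_0,\xi);\langle\xi,v_0\rangle\ge0\}$, which is \emph{not} closed (its complement is a subset of the single fibre $E^*_{x_0}$, hence has empty interior whenever $\dim M\ge1$). So $\gamma^\circ$ is not an intersection of closed sets in any obvious way, and in fact the conclusion is false without the openness of $\gamma$: the paper's own Example~\ref{exa:cones} exhibits an open $\gamma$ for which $\gamma^\ccirc=(\gamma^\circ)^\circ$ is not closed, showing that the polar of a (closed, convex, but non-open) cone in a vector bundle can fail to be closed. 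The paper's proof of closedness uses openness essentially, via a constant-cone neighborhood $U\times\theta\subset\gamma$ of a chosen $(x,v)$, which guarantees $v\in\gamma_{x_n}$ for $x_n$ near $x$ and lets one pass to the limit in $\langle\xi_n,v\rangle\ge0$. You correctly flag the fibre-jumping subtlety for the interior computation, but you do not apply the same caution here, where it is at least as relevant. Separately, the claim $\pcl{\gamma}=\gamma^\ccirc$ is also false in general: over a point $x$ with $\gamma_x=\varnothing$ one has $(\pcl{\gamma})_x=\varnothing$ but $(\gamma^\ccirc)_x=(\gamma_x)^{\circ\circ}=\{0\}$, so the fibrewise bipolar theorem does not give equality (it only holds on the open set $U=\{x;\gamma_x\ne\varnothing\}$). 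The weaker equality $\Int(\pcl{\gamma})=\Int(\gamma^\ccirc)$ is what actually holds, and the paper proves it by noting that neither set has interior points over $M\setminus U$.
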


\begin{proof}
(i)
First, $\gamma^\circ$ being a polar cone, it is convex.
We shall prove that it is closed.
Let $(x;\xi) \in \ol{\gamma^\circ}$.
If $\gamma_x=\varnothing$, then $(x;\xi) \in \gamma^\circ$.
Now assume $\gamma_x\neq\varnothing$.
We choose a local chart $U$ in a neighborhood of $x$ so that $E\vert_U=U\times\BBV$.
There exists a sequence $(x_n, \xi_n)_n \to[n] (x,\xi)$ with $(x_n,\xi_n) \in \gamma^\circ$.
Let $v \in \gamma_x$.
Since $\gamma$ is open, there exists an open cone $\theta\subset\BBV$ such that $(x,v)\in U \times \theta\subset\gamma$.
Then for all $n$ large enough, $x_n \in U$ and $\langle\xi_n,v\rangle \geq 0$.
Therefore, $\langle\xi,v\rangle \geq 0$ and $(x;\xi) \in \gamma^\circ$.

\spa
(ii)
Since $\gamma \subset \pcl{\gamma}$ and $\gamma$ is open, $\gamma \subset \Int(\pcl{\gamma})$.
On the other hand, $\Int(\pcl{\gamma}) \subset \pint{\pcl{\gamma}} = \gamma$ (recall that in a vector space, an open convex set is equal to the interior of its closure).
Therefore, $\gamma = \Int(\pcl{\gamma})$.

\spa
(iii)
The set $U = \{ x \in M ; \gamma_x \neq \varnothing \}$ is open, and $\gamma^\ccirc|_U = \pcl{\gamma}|_U$ while $\gamma^\ccirc|_{M \setminus U} = M \setminus U$ and $\pcl{\gamma}|_{M\setminus U} = \varnothing$.
Therefore, $\Int({\gamma^\ccirc}) = \Int(\pcl{\gamma})$.
\end{proof}

\subsubsection*{Cotangent bundles}

Let $M$ be a manifold.
We denote by $\tau\cl TM\to M$ and by $\pi\cl T^*M\to M$ its tangent and cotangent bundle, respectively.

For a submanifold $N$ of $M$, we denote by $T_N M = (N \times_M TM)/TN$ the normal bundle of $N$ in $M$ and by $T^*_N M$ its dual, the conormal bundle of $N$ in $M$.
In particular, $T^*_MM$ is the zero-section.

For two manifolds $M$ and $N$ we denote by $q_1$ and $q_2$ the first and second projections defined on $M\times N$.
We denote by $\Delta_M$, or simply $\Delta$, the diagonal of $M\times M$.

Let $M_i$ ($i=1,2,3$) be manifolds.
For short, we write $M_{ij}\eqdot M_i\times M_j$ ($1\leq i,j\leq3$) and $M_{123}=M_1\times M_2\times M_3$.
We denote by $q_i$ the projection $M_{ij}\to M_i$ or the projection $M_{123}\to M_i$
and by $q_{ij}$ the projection $M_{123}\to M_{ij}$.
For $A_1\subset M_{12}$ and $A_2\subset M_{23}$, one sets
\eqn
&&A_1\conv[2]A_2=q_{13}(\opb{q_{12}}A_1\cap\opb{q_{23}}A_2).
\eneqn
Similarly, we denote by
$p_i$ the projection $T^*M_{ij}\to T^*M_i$ or the projection $T^*M_{123}\to T^*M_i$
and by $p_{ij}$ the projection $T^*M_{123}\to T^*M_{ij}$.
We also need to introduce the map $p_{ij^a}$, the composition of $p_{ij}$ and the antipodal map on $T^*M_j$ and similarly with $p_{i^aj}$.

Let $\Lambda_1\subset T^*M_{12}$ and $\Lambda_2\subset T^*M_{23}$.
We set
\eq\label{eq:convolution_of_sets}
&&\Lambda_1\aconv[2]\Lambda_2\eqdot
p_{13}(\opb{p_{12^a}}\Lambda_1\cap\opb{p_{23}}\Lambda_2)
=p_{13}(\opb{p_{12}}\Lambda_1\cap\opb{p_{2^a3}}\Lambda_2).
\eneq

To a morphism $f\cl M\to N$ of manifolds one associates the maps
\eq\label{eq:lagrcorresp}
&&\xymatrix{
TM\ar[rd]_-{\tau}\ar[r]^-{f'}&M\times_NTN\ar[d]^-{\tau}\ar[r]^-{f_\tau}&TN\ar[d]^-{\tau}\\
&M\ar[r]^-f&N,
}\quad\quad
\xymatrix{
T^*M\ar[rd]_-{\pi}&M\times_NT^*N\ar[d]^-{\pi}\ar[l]_-{f_d}\ar[r]^-{f_\pi}&T^*N\ar[d]^-{\pi}\\
&M\ar[r]^-f&N.
}\eneq
We set
\eq\label{eq:Tf}
&&Tf\eqdot f_\tau\circ f'\cl TM\to TN,
\eneq
and call $Tf$ the tangent map to $f$.

We denote by $\Gamma_f$ the graph of $f$.
Then, after identifying $T\Gamma_f$ with its image in $T(M\times N)$, we have $T\Gamma_f=\Gamma_{Tf}$.
We set
\eq\label{eq:lambdaf1}
&&\Lambda_f\eqdot T^*_{\Gamma_f}(M\times N)=(T\Gamma_f)^\perp.
\eneq
Then we have a commutative diagram in which $p_1$ and $p_2$ are induced by the projections $T^*(M\times N)$ to $T^*M$ and $T^*N$ and $p_2^a$ is the composition of $p_2$ and the antipodal map of $T^*N$:
\eq\label{eq:lambdaf2}
&&\xymatrix{
&\Lambda_f\ar[d]^\sim\ar[ld]_-{p_1}\ar[rd]^-{p^a_2}&\\
T^*M&M\times_NT^*N\ar[l]_-{f_d}\ar[r]^-{f_\pi}&T^*N.
}\eneq
We also set
\eq
&&T^*_MN\eqdot\opb{f_d}T^*_MM\simeq (T^*_MM\times T^*N)\cap\Lambda_f.\label{eq:TMN}
\eneq
and call $T^*_MN$ the conormal bundle to $M$ in $N$.

\subsubsection*{Quadratic forms}
 Let $\BBV$ be a real finite dimensional vector space, $\BBV^\C$ its complexification.
Let $Q$ be a quadratic form on  $\BBV$. We keep the same notation $Q$ 
to denote  the quadratic form defined on $\BBV^\C$.
We  set $Q_{>0} \eqdot \{ v \in \BBV ; Q(v) >0 \}$ and similarly with $Q_{\geq 0}$. 
We denote by $ \langle \cdot,\cdot \rangle_Q $ the bilinear form associated with $Q$.

\begin{lemma}\label{lem:quadra0}
Assume that  $Q$ has exactly one positive eigenvalue on $\BBV$.
\banum
\item
If $u, v \in \BBV$ and $Q(u) > 0$, then $Q(u + \sq v) \neq 0$.
\item
One has $\ol{Q_{>0}} = Q_{\geq 0}$.
\item
Let $\gamma$ be a connected component of $Q_{>0}$.
Then
$\ol{\gamma} = \{ v \in \BBV ; \langle v, w \rangle_Q \geq 0 \text{ for all } w \in \gamma\}$.
\eanum
\end{lemma}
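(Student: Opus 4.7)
For (a), I would expand $Q(u+\sq v)=Q(u)-Q(v)+2\sq\langle u,v\rangle_Q$ and separate real and imaginary parts; vanishing forces $Q(v)=Q(u)>0$ and $\langle u,v\rangle_Q=0$. Then $u$ and $v$ are linearly independent (a relation $v=\lambda u$ gives $\lambda Q(u)=0$, hence $\lambda=0$), so $Q$ restricts to a diagonal form with two strictly positive diagonal entries on their span, contradicting the hypothesis that $Q$ has exactly one positive eigenvalue. For (b), $\overline{Q_{>0}}\subset Q_{\geq 0}$ is immediate by continuity. For the converse, pick $v\in Q_{\geq 0}$ and some $e\in\BBV$ with $Q(e)>0$; after replacing $e$ by $-e$ if necessary, we may arrange $\langle v,e\rangle_Q\geq 0$, and then $Q(v+te)=Q(v)+2t\langle v,e\rangle_Q+t^2 Q(e)>0$ for every $t>0$, exhibiting $v$ as a limit of elements of $Q_{>0}$.

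For (c), set $C=\{v\in\BBV:\langle v,w\rangle_Q\geq 0 \text{ for all } w\in\gamma\}$, a closed convex cone. The crucial preliminary is that $\langle u,w\rangle_Q>0$ whenever $u,w\in\gamma$: the continuous map $(u,w)\mapsto\langle u,w\rangle_Q$ on the connected set $\gamma\times\gamma$ equals $Q(u)>0$ on the diagonal and cannot vanish by the argument used in (a), hence keeps a constant positive sign. The inclusion $\overline{\gamma}\subset C$ then follows by passage to the limit. Moreover, fixing any $e\in\gamma$, the preliminary gives $\langle u,e\rangle_Q<0$ for $u$ in the other connected component $-\gamma$ of $Q_{>0}$, which yields a sign criterion distinguishing $\gamma$ from $-\gamma$ among vectors of $Q_{>0}$.

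The main obstacle is the reverse inclusion $C\subset\overline{\gamma}$, which I would split in two steps. First, I would show that $Q(v)\geq 0$ for every $v\in C$: if $Q(v)<0$, then $\BBV=\R v\oplus v^{\perp}$, and by Sylvester's law of inertia $Q|_{v^\perp}$ retains a positive eigenvalue; so there exists $w\in v^\perp$ with $Q(w)>0$, and replacing $w$ by $-w$ if necessary we may assume $w\in\gamma$. Then for small $s>0$ the vector $w+sv$ remains in the open set $\gamma$, whereas $\langle v,w+sv\rangle_Q=sQ(v)<0$, contradicting $v\in C$. Second, given $Q(v)\geq 0$, fix any $e\in\gamma$: for every $t>0$ one has $Q(v+te)>0$ and $\langle v+te,e\rangle_Q>0$, so $v+te\in Q_{>0}$, and by the sign criterion above $v+te\in\gamma$; letting $t\to 0^+$ gives $v\in\overline{\gamma}$.
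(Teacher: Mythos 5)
Your arguments for (a) and (b) are correct, and for (c) the two key ideas --- the positivity of $\langle\cdot,\cdot\rangle_Q$ on $\gamma\times\gamma$ via connectedness, and the $Q$-orthogonal decomposition $\BBV=\R v\oplus v^{\perp}$ to show $C\subset Q_{\geq 0}$ --- are exactly the right ones. (The paper leaves this lemma as an exercise, so there is no written proof to compare against.) There is, however, a genuine gap in (c): both the phrase ``replacing $w$ by $-w$ if necessary we may assume $w\in\gamma$'' and the final appeal to ``the sign criterion'' tacitly assume that $Q_{>0}$ has exactly two connected components, namely $\gamma$ and $-\gamma$. As you state it, the sign criterion only \emph{distinguishes} $\gamma$ from $-\gamma$; it does not rule out a hypothetical third component, so knowing $v+te\in Q_{>0}$ and $\langle v+te,e\rangle_Q>0$ does not yet force $v+te\in\gamma$.

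The gap is easy to close with inequalities you already have. Given $x\in Q_{>0}$ and $e\in\gamma$, your preliminary gives $\langle x,e\rangle_Q\neq 0$; say $\langle x,e\rangle_Q>0$. Then for $s\in[0,1]$,
\[
Q\bigl((1-s)x+se\bigr)=(1-s)^2Q(x)+2s(1-s)\langle x,e\rangle_Q+s^2Q(e)>0,
\]
so the segment joins $x$ to $e$ inside $Q_{>0}$ and $x\in\gamma$; if $\langle x,e\rangle_Q<0$ the same argument with $-e$ gives $x\in-\gamma$. Hence $Q_{>0}=\gamma\sqcup(-\gamma)$, which legitimizes both uses of the sign criterion. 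Alternatively, in your last step you can bypass the component count entirely: for $t>0$, writing $(1-s)(v+te)+se=(1-s)v+\bigl((1-s)t+s\bigr)e$ and using $Q(v)\geq 0$, $\langle v,e\rangle_Q\geq 0$, $Q(e)>0$, the three terms in the expansion of $Q$ are all $\geq 0$ with the last one $>0$, so the segment from $v+te$ to $e$ stays in $Q_{>0}$ and $v+te\in\gamma$ directly.
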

The proof is left as an exercise.

Now let $Q$ be a quadratic form on a manifold $M$ and  let $\langle \scdot, \scdot \rangle_Q$ be the associated bilinear form on $TM$.
If $Q$ is nondegenerate, it induces an isomorphism $\sharp \colon TM \to T^*M, v \mapsto \langle v, \scdot \rangle_Q$, with inverse denoted here by $\flat$.
Therefore, there is an induced quadratic form, denoted by $Q_x^\sharp$, on each $T^*_xM$.
As usual, we shall write $v^\sharp=\sharp(v)$ for $v\in TM$.
We set $Q_{>0} \eqdot \{ v \in TM ; Q(v) >0 \}$ and similarly for $Q_{\geq 0}$.

\begin{lemma}\label{lem:quadra}
Let $M$ be a connected  manifold and $Q$ a continuous quadratic form on $M$ with exactly one positive eigenvalue.
Then 
\banum
\item
$Q_{>0}$ has at most two connected components and  $\pcl{Q_{>0}} = \ol{Q_{>0}} = Q_{\geq 0}$.
\item
Suppose that $Q_{>0}$ has two connected components and let $\gamma$ be one of them.
Then
\bnum
\item
$\gamma$ is an open convex proper cone and $\gamma_x \neq \varnothing$ for any $x \in M$,
\item
One has $\ol{\gamma} = \pcl{\gamma}$ and $\gamma = \Int( \ol\gamma )$.
\item
Suppose furthermore that $Q$ is nondegenerate.
Then $\gamma^\circ = \ol{\gamma}^\sharp$ and $\Int(\gamma^\circ) = \gamma^\sharp$.
\enum
\eanum
\end{lemma}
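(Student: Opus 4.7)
My plan is to reduce every assertion to the fiberwise statements of Lemma~\ref{lem:quadra0} using continuity of $Q$ and a covering-space argument. For~(a), the equality $\pcl{Q_{>0}} = Q_{\geq 0}$ is immediate from Lemma~\ref{lem:quadra0}~(b) applied fiberwise; $\ol{Q_{>0}} = Q_{\geq 0}$ follows from the same lemma, since a fiberwise approximation within $T_xM$ is automatically an approximation in $TM$. For the bound on components, I consider $P \eqdot Q_{>0}/\R_{>0}$, the quotient by the fiberwise scaling action. Continuity of $Q$ shows that any $v_0 \in Q_{x_0,>0}$ extends to a continuous section of $P$ over a neighborhood of $x_0$, so $P \to M$ is a two-sheeted covering; connectedness of $M$ gives at most two components for $P$, and connectedness of $\R_{>0}$ transfers this bound to $Q_{>0}$.

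For~(b)(i), if $Q_{>0}$ has two components then $P \to M$ is trivialized, so each component $\gamma$ surjects onto $M$, whence $\gamma_x \neq \varnothing$ for all $x$. Openness of $\gamma$ is standard, convexity in each fiber comes from Lemma~\ref{lem:quadra0}, and properness follows from $\gamma_x \cap (-\gamma_x) = \varnothing$, which forbids any nonzero linear subspace inside $\gamma_x$.

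For~(b)(ii), the inclusion $\pcl\gamma \subseteq \ol\gamma$ is immediate. For the converse, suppose by contradiction that $w \in (\ol\gamma \cap T_xM) \setminus \ol{\gamma_x}$; then $w \in \ol{-\gamma_x}$ by Lemma~\ref{lem:quadra0}~(b). Pick $v_0 \in \gamma_x$ and a sequence $(x_n, w_n) \to (x, w)$ in $\gamma$. In a local trivialization $(x_n, v_0) \in \gamma$ for large $n$, so fiberwise convexity gives $w_n + t v_0 \in \gamma_{x_n}$ for every $t \geq 0$, whence $Q_x(w + t v_0) \geq 0$ in the limit. Since $Q_x(w) = 0$, expanding $Q_x(w + t v_0) = t^2 Q_x(v_0) + 2 t \langle w, v_0 \rangle_Q$ and combining with $\langle w, v_0 \rangle_Q \leq 0$ from Lemma~\ref{lem:quadra0}~(c) applied to $-\gamma_x$ forces $\langle w, v_0 \rangle_Q = 0$ for every $v_0 \in \gamma_x$. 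Hence $w \in \ker Q_x \subseteq \ol{\gamma_x}$ by Lemma~\ref{lem:quadra0}~(c) once more, contradicting the assumption. The identity $\gamma = \Int(\ol\gamma)$ then follows from Lemma~\ref{le:lambdaclosed} applied to the open convex cone $\gamma$, since $\ol\gamma = \pcl\gamma$.

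For~(b)(iii), Lemma~\ref{lem:quadra0}~(c) gives fiberwise $\ol{\gamma_x} = \{v \in T_xM ; \langle v, w \rangle_Q \geq 0 \text{ for all } w \in \gamma_x\}$; under $\sharp \colon T_xM \isoto T_x^*M$ this set transports to the polar cone $\gamma^\circ_x$, yielding the fiberwise and hence global identity $\ol{\gamma}^\sharp = \gamma^\circ$. Since $\sharp$ is a bundle homeomorphism, applying it to $\gamma = \Int(\ol\gamma)$ from~(ii) gives $\gamma^\sharp = \Int(\gamma^\circ)$. The main obstacle is the inclusion $\ol\gamma \subseteq \pcl\gamma$ in~(b)(ii), where the quadratic-in-$t$ computation with the bilinear form is needed to rule out the ``wrong'' fiber component.
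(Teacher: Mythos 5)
Your overall plan — reducing each assertion to the fiberwise statements of Lemma~\ref{lem:quadra0} and using a covering-space argument for the component count — is a natural one, and the paper itself offers no proof to compare against. However, two specific steps need repair.

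First, $P = Q_{>0}/\R_{>0}$ is \emph{not} a two-sheeted covering of $M$: the fiber of $P$ over $x$ is the set of open rays in $Q_{x,>0}$, an open subset of the unit sphere in $T_xM$ with two connected components, each homeomorphic to an open disk. These fibers are not discrete, so the covering-space reasoning does not apply directly. What you want is the further quotient $\tilde P$ obtained by collapsing each connected component of each $Q_{x,>0}$ to a point. One must then verify that $\tilde P \to M$ is a two-sheeted covering; this needs a little care (for instance, that the local constant sections $x \mapsto (x,v_0)$ have \emph{open} image in $\tilde P$, which uses the openness of $Q_{>0}$ together with the convexity of the fiberwise components). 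Since $Q_{>0} \to \tilde P$ has connected fibers, the bound $\pi_0(Q_{>0}) \leq 2$ follows from $\pi_0(\tilde P) \leq 2$, and then (a) and (b)(i) go through; the same local constancy is also what justifies your later claim that $(x_n,v_0) \in \gamma$ for large $n$.

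Second, in the proof of (b)(ii) the equality $Q_x(w) = 0$ is asserted but not established: from $w \in \ol{-\gamma_x}$ you only obtain $Q_x(w) \geq 0$. You must exclude the case $Q_x(w) > 0$: then $w \in Q_{x,>0} \setminus \gamma_x = -\gamma_x$, but $-\gamma_x \subset -\gamma$, an open subset of $TM$ disjoint from $\gamma$ and hence from $\ol{\gamma}$, contradicting $w \in \ol{\gamma}$. Once this case is eliminated, your quadratic-in-$t$ computation, the two appeals to Lemma~\ref{lem:quadra0}~(c) (first to get $\langle w, v_0\rangle_Q \leq 0$, then to get $\ker Q_x \subset \ol{\gamma_x}$), and the derivation of (b)(iii) from (b)(ii) are all correct.
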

The proof is left as an exercise.

\subsubsection*{Preorders}
Consider a preorder $\preceq$ on a manifold $M$ and its graph $\gPo\subset M\times M$.
Then
\eqn
&&\Delta\subset\gPo\, ,\\
&&\gPo\circ\gPo=\gPo\,.
\eneqn
In the sequel, we shall often identify $\preceq$ and $\gPo$, that is, we shall call $\gPo$ ``a preorder''.
We denote by $\preceq^a$ the opposite preorder.

For a subset $A\subset M$, one sets
\eq\label{eq:J+}
&&\left\{
\parbox{70ex}{
$\paso A = q_1(\opb{q_2}(A)\cap \gPo)=\{x\in M;\mbox{ there exists } y\in A\mbox{ with } x\preceq y\}$,\\
$\futo A = q_2(\opb{q_1}(A)\cap \gPo)=\{x\in M;\mbox{ there exists } y\in A\mbox{ with } y\preceq x\}$.
}\right.
\eneq
For $x\in M$, we write $\futo x$ and $\paso x$ instead of $\futo{\{x\}}$ and $\paso{\{x\}}$ respectively.
One calls $\paso A$ (resp.\ $\futo A$) the past (resp.\ future) of $A$ for the preorder $\preceq$.

The next results are obvious:
\begin{itemize}
\item
$\paso{A}=\bigcup_{x\in A}\paso{x}$, and similarly with $\futo A$,
\item
$A\subset\paso A$, $\paso{\paso A}=\paso A$ and similarly with $\futo A$,
\item
$\gPoa=v(\gPo)$ where $v\cl M\times M\to M\times M$ is the map $v(x,y)=(y,x)$,
\item
$\paso{A}=\futoa{A}$,
\item
$A=\futo{A}\Leftrightarrow M\setminus A=\paso{M\setminus A}$.
(Indeed, assume $A=\futo{A}$ and let $x\notin A$, $y\in\pas{x}$.
Then $x\in\futo{y}$ which shows that $y\notin A$.)
\end{itemize}

\begin{definition}\label{def:regcone}
Let $\preceq$ be a preorder on $M$.
\banum
\item
The preorder is \define{closed} if $\gPo$ is closed in $M\times M$.
\item
The preorder is \define{proper} if $q_{13}$ is proper on $\gPo\times_M\gPo$.
\eanum
\end{definition}
In other words, a preorder $\preceq$ is proper if for any two compact subsets $A$ and $B$ of $M$, the so-called \define{causal diamond} $\futo A \cap \paso B$ is compact.

\begin{proposition}\label{pro:proper}
Let $\preceq$ be a preorder on $M$.
\bnum
\item
If $\preceq$ is closed and $A$ is a compact subset of $M$, then $\paso{A}$ and $\futo{A}$ are closed.
\item
If $\preceq$ is proper, then it is closed.
\enum
\end{proposition}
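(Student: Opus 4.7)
My plan is to treat the two statements separately, both following from elementary point-set topology once one unpacks the definitions.

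For (i), I would start from the formula $\paso{A}=q_1(\opb{q_2}(A)\cap\gPo)=q_1((M\times A)\cap\gPo)$ supplied by~\eqref{eq:J+}. Since $\preceq$ is closed, $\gPo$ is closed in $M\times M$, and so is $(M\times A)\cap\gPo$. Because $A$ is compact, the projection $q_1\colon M\times A\to M$ is a closed map (the standard tube-lemma fact that projection onto the first factor is closed when the other factor is compact Hausdorff). Hence $\paso{A}$ is closed. The argument for $\futo{A}$ is the mirror image, starting from $\opb{q_1}(A)\cap\gPo$ and using the projection $q_2$.

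For (ii), I would show directly that $\gPo$ contains its sequential limit points in $M\times M$. Suppose $(x_n,y_n)\in\gPo$ converges to $(x,y)$. The key move is to exploit the reflexivity of $\preceq$: the triple $(x_n,y_n,y_n)$ lies in the fiber product $\gPo\times_M\gPo\subset M\times M\times M$, and $q_{13}$ sends it to $(x_n,y_n)$. Set $K=\{(x_n,y_n):n\in\N\}\cup\{(x,y)\}$, which is a compact subset of $M\times M$. By the properness assumption, $\opb{q_{13}}(K)$ is compact, hence closed in $M^3$ since $M^3$ is Hausdorff. The sequence $(x_n,y_n,y_n)$ lies in $\opb{q_{13}}(K)$ and converges in $M^3$ to $(x,y,y)$, so the limit still lies in $\opb{q_{13}}(K)\subset\gPo\times_M\gPo$, which precisely says $x\preceq y\preceq y$, i.e.\ $(x,y)\in\gPo$.

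The one delicate point I would flag is avoiding a circular shortcut in (ii): one might be tempted to invoke the general fact that a proper continuous map into a Hausdorff space is closed and conclude that $q_{13}(\gPo\times_M\gPo)=\gPo$ is closed. But this requires $\gPo\times_M\gPo$ itself to be closed in $M^3$, which a priori demands that $\gPo$ already be closed. Working with an ad hoc compact $K$ built from the convergent sequence, and using reflexivity to lift $(x_n,y_n)$ to a point of $\gPo\times_M\gPo$, sidesteps this issue and is really the whole content of the proof.
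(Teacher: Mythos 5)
Your proof of (i) is essentially identical to the paper's: both write $\paso{A}$ (or $\futo{A}$) as the image under a projection of the closed set $\gPo\cap(M\times A)$, and both use that the projection off a compact factor is a closed map. Nothing to add there.

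Your proof of (ii) is correct, but the ``delicate point'' you flag in the paper's argument is not actually there. The general fact being invoked is that a proper continuous map $f\colon X\to Y$ into a locally compact Hausdorff space is a \emph{closed} map; this requires nothing of the domain $X$, and in particular it does not require $X$ to be a closed subspace of some ambient space. Applied to $q_{13}\vert_{\gPo\times_M\gPo}\colon\gPo\times_M\gPo\to M\times M$ (with $\gPo\times_M\gPo$ carrying the subspace topology of $M^3$), properness gives that this restriction is closed; since $\gPo\times_M\gPo$ is tautologically closed in itself, its image $\gPo$ is closed in $M\times M$. So the paper's one-line argument is airtight. What you have done is re-derive the implication ``proper $\Rightarrow$ closed image'' by hand via a sequence in the compact set cut out by the preimage of $K$; that is a perfectly good self-contained version of the same argument, and the observation that reflexivity supplies the lift $(x_n,y_n)\mapsto(x_n,y_n,y_n)$ into $\gPo\times_M\gPo$ is exactly the reason $q_{13}(\gPo\times_M\gPo)=\gPo$ (together with transitivity for the reverse inclusion), which the paper uses implicitly. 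One small imprecision to fix: properness gives that $\opb{q_{13}}(K)\cap(\gPo\times_M\gPo)$ is compact, not $\opb{q_{13}}(K)$ itself (the latter is roughly $K\times M$ and is non-compact unless $M$ is). Your sequence lives in that intersection, the intersection is compact hence closed in $M^3$, and the limit $(x,y,y)$ therefore lies in $\gPo\times_M\gPo$, which is what you want.
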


\begin{proof}
(i)
One has $\futo{A} = q_2 ( \gPo \cap q_1^{-1}(A) ) = q_2 ( \gPo \cap (A \times M) )$.
The projection $q_2$ is proper on $A \times M$, hence closed, and $\gPo \cap (A \times M)$ is closed, therefore $\futo{A}$ is closed.
The proof for $\paso{A}$ is similar.

\spa
(ii)
If the map $q_{13}\cl\gPo\times_M\gPo\to M\times M$ is proper, then it is closed.
Therefore, its image $\gPo$ is closed.
\end{proof}

\subsection{Causal manifolds}

In the literature, one often encounters time-orientable Lorentzian manifolds, to which one can associate a cone in $TM$ (see Definition~\ref{def:spacetime}).
Here, we only assume to be given a nowhere empty open convex cone $\gamma\subset TM$.

Recall that for a morphism of manifolds $f\cl M\to N$, the tangent map $Tf$ is defined in~\eqref{eq:Tf}.

\begin{definition}\label{def:causalmnf}
\banum
\item
A \define{causal manifold} $(M,\gamma)$ is a nonempty connected manifold $M$ equipped with an open convex cone $\gamma\subset TM$ such that $\gamma_x\neq\varnothing$ for all $x\in M$.

\item
A \define{morphism of causal manifolds} $f\cl (M,\gamma_M)\to(N,\gamma_N)$ is a morphism of manifolds such that $Tf(\ol{\gamma_M})\subset\ol{\gamma_N}$.

\item
A morphism of causal manifolds $f$ is {\em strict} if $Tf(\gamma_M)\subset\gamma_N$.
\eanum
\end{definition}
The composition of causal morphisms (resp.\ strictly causal) morphisms is a causal (resp.\ strictly causal) morphism, so that causal manifolds and their causal (resp.\ strictly causal) morphisms form a category.

For $U$ a open subset of $M$, we denote by $\gamma_U$ the cone $U\times_M\gamma$ of $TU$.
Then $(U,\gamma_U)$ is a causal manifold and the embedding $U\into M$ induces a morphism of causal manifolds $(U,\gamma_U)\to(M,\gamma)$.

\begin{notation}\label{not:causalI1}
We will often denote by $I$ an open interval of $\R$, which we will implicitly assume to contain $[0,1]$.
We denote by $t$ a coordinate on $I$, by $(t;v)$ the coordinates on $TI$ and by $(t;\tau)$ the associated coordinates on $T^*I$.
To $I$ we associate the causal manifold $(I,I\times \R_{>0})$ that we simply denote by $(I,+)$.
\end{notation}

\begin{proposition}\label{pro:strict}
If $f \colon (M,\gamma_M) \to (N,\gamma_N)$ is a causal submersion and if $\Int\left( \ol{\gamma_N}\right) = \gamma_N$, then $f$ is strictly causal.
In particular, if $\tim\cl (M,\gamma)\to(I,+)$ is a submersive causal morphism, then $\tim$ is strictly causal.
\end{proposition}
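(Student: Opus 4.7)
The plan is to exploit the fact that a submersion of manifolds induces a submersion on tangent bundles, hence an open map, to transfer the strict inclusion from the target cone back to the image of the source cone.

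First I would observe that if $f\cl M\to N$ is a submersion, then the tangent map $Tf\cl TM\to TN$ is itself a submersion of manifolds. Indeed, in local coordinates adapted to $f$ (where $f$ is a projection $(x_1,\dots,x_m)\mapsto(x_1,\dots,x_n)$), the map $Tf$ reads $(x;v)\mapsto(x_1,\dots,x_n;v_1,\dots,v_n)$, which is visibly a submersion. In particular, $Tf$ is an open map between the total spaces of the tangent bundles.

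Next, starting from $v\in\gamma_M$ (or more directly, from the whole open set $\gamma_M\subset TM$), I would argue: since $\gamma_M$ is open in $TM$ and $Tf$ is open, the image $Tf(\gamma_M)$ is open in $TN$. Meanwhile, the causality hypothesis $Tf(\ol{\gamma_M})\subset\ol{\gamma_N}$ combined with $\gamma_M\subset\ol{\gamma_M}$ yields $Tf(\gamma_M)\subset\ol{\gamma_N}$. An open subset of a closed set lies in its interior, so
\[
Tf(\gamma_M)\subset\Int(\ol{\gamma_N})=\gamma_N,
\]
which is exactly the strict causality of $f$.

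For the ``in particular'' assertion, I just need to check that $(I,+)$ satisfies the hypothesis on the target cone. Here $\gamma_N=I\times\R_{>0}\subset TI=I\times\R$, whose closure is $I\times\R_{\geq 0}$ and whose interior is $I\times\R_{>0}=\gamma_N$, as required. I do not expect any real obstacle: the only thing to be careful about is distinguishing openness of $Tf$ on the total space $TM$ from mere fibrewise surjectivity, and the explicit local-coordinate description makes this immediate.
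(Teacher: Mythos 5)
Your argument is correct and is essentially identical to the paper's proof, which observes that $Tf$ is open because $f$ is a submersion and then concludes $Tf(\gamma_M)\subset\Int(\ol{\gamma_N})=\gamma_N$; you simply spell out the intermediate steps (openness of $Tf(\gamma_M)$, the inclusion via causality, and the ``open subset of a closed set lies in its interior'' observation) that the paper compresses into one line.
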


\begin{proof}
Since $f$ is submersive, then $Tf$ is open, so $Tf({\gamma_M}) \subset \Int\left( \ol{\gamma_N}\right) = \gamma_N$.
\end{proof}

\begin{notation}\label{not:lambda}
In this paper, for a causal manifold $(M,\gamma)$ we set
\eq\label{eq:gl}
&&\lambda\eqdot\gamma^\circ
\eneq
and similarly with $\lambda_M, \lambda_N$, etc.
Note that, by Lemma~\ref{le:lambdaclosed}, $\lambda$ is a closed convex proper cone of $T^*M$ and $\gamma=\Int(\lambda^\circ)$.
Moreover, $\lambda\supset T^*_MM$.
\end{notation}
\begin{remark}
One has $\ol{(\gamma_x)}=\lambda_x^\circ \subset (\ol{\gamma})_x$ but as seen in Example~\ref{exa:cones}, the inclusion may be strict.
\end{remark}

\begin{proposition}\label{pro:causalmor1}
Let $(M,\gamma_M)$ and $(N,\gamma_N)$ be two causal manifolds and let $f\cl M\to N$ be a morphism of manifolds.
Then $f$ is a morphism of causal manifolds if and only if $\Lambda_f\aconv\lambda_N\subset\lambda_M$.
\end{proposition}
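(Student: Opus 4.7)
The plan is to unfold $\Lambda_f\aconv\lambda_N$ explicitly and then convert the resulting inclusion into the causal-morphism condition by polar duality.

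First, using the identification $\Lambda_f\simeq M\times_NT^*N$ in diagram~\eqref{eq:lambdaf2}, in which $p_1=f_d$ and $p_2^a=f_\pi$, and unfolding the definition~\eqref{eq:convolution_of_sets} of $\aconv$ with $M_3=\rmpt$, I compute
\[
\Lambda_f\aconv\lambda_N=f_d\bigl(f_\pi^{-1}(\lambda_N)\bigr)=\bigl\{(x;{}^tT_xf(\eta))\in T^*M\mid x\in M,\ \eta\in\lambda_{N,f(x)}\bigr\}.
\]
Hence $\Lambda_f\aconv\lambda_N\subset\lambda_M$ is equivalent to the fiberwise condition ${}^tT_xf(\lambda_{N,f(x)})\subset\lambda_{M,x}$ for every $x\in M$.

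Second, I dualize pointwise. Each fiber $\gamma_{M,x}$ (resp.\ $\gamma_{N,f(x)}$) is a nonempty open convex cone, so by the bipolar theorem $\lambda_{M,x}^\circ=\ol{\gamma_{M,x}}$ and $\lambda_{N,f(x)}^\circ=\ol{\gamma_{N,f(x)}}$. The elementary identity ${}^tL(C^\circ)\subset D^\circ\iff L(D)\subset C^{\circ\circ}$, valid for any linear map $L\cl V\to W$ and cones $C\subset W$, $D\subset V$, follows directly from the adjunction $\langle {}^tL(\eta),v\rangle=\langle\eta,L(v)\rangle$. Applying it with $L=T_xf$, $C=\gamma_{N,f(x)}$, $D=\gamma_{M,x}$, and using continuity of $T_xf$ to upgrade $T_xf(\gamma_{M,x})$ to $T_xf(\ol{\gamma_{M,x}})$, I convert the fiberwise condition of the previous step into
\[
T_xf(\ol{\gamma_{M,x}})\subset\ol{\gamma_{N,f(x)}}\quad\text{for every }x\in M.
\]

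Finally, this pointwise condition matches the causal-morphism condition $Tf(\ol{\gamma_M})\subset\ol{\gamma_N}$ of Definition~\ref{def:causalmnf}(b): the fiberwise-to-total-space implication is a direct sequence argument using continuity of $Tf$, while the converse is obtained by applying the total-space inclusion along sequences lying in a single fiber of $TM\to M$. I expect the polar-duality exchange in the second step to be the only nonroutine ingredient; the rest is bookkeeping with the definitions of $\Lambda_f$ and $\aconv$.
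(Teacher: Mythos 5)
Your unwinding of $\Lambda_f\aconv\lambda_N$ as $\{(x;{}^tT_xf(\eta)):x\in M,\ \eta\in\lambda_{N,f(x)}\}$ and the fiberwise polar-duality exchange are correct, and they match the paper's reduction (the paper phrases it through the orthogonality $\Lambda_f=(\Gamma_{Tf})^\perp$ and the convolution formalism, but the content is the same linear-algebra duality applied at each $x\in M$). The "fiberwise-to-total-space" direction you sketch for the final identification is also fine: if $T_xf(\ol{\gamma_{M,x}})\subset\ol{\gamma_{N,f(x)}}$ for every $x$, then for $(x;v)\in\ol{\gamma_M}$ a sequence $(x_n;v_n)\in\gamma_M$ converging to $(x;v)$ gives $T_{x_n}f(v_n)\in\ol{\gamma_{N,f(x_n)}}\subset\ol{\gamma_N}$, which is closed, so $T_xf(v)\in\ol{\gamma_N}$.

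The gap is in the converse, which you dismiss as bookkeeping. Definition~\ref{def:causalmnf}(b) uses closures in the total spaces $TM$ and $TN$, and your sequence argument "along a single fiber of $TM\to M$" only produces $T_xf(\ol{\gamma_{M,x}})\subset(\ol{\gamma_N})_{f(x)}$, the fiber at $f(x)$ of the closure of $\gamma_N$ in $TN$. What the duality step requires is the potentially \emph{smaller} set $\ol{\gamma_{N,f(x)}}=\lambda_{N,f(x)}^\circ$, and the remark after Notation~\ref{not:lambda} together with Example~\ref{exa:cones} explicitly warn that the inclusion $\ol{(\gamma_y)}\subset(\ol{\gamma})_y$ can be strict. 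This is not a fixable omission but an actual failure of the equivalence: take $M=N=\R^2$, $f=\mathrm{id}$, $Z=\{y_1=0\}$, $\gamma_0^1=\{w_2>|w_1|\}\subsetneq\gamma_0^2=\{w_2>0\}$, $\gamma_N=\R^2\times\gamma_0^1\cup(\R^2\setminus Z)\times\gamma_0^2$, and $\gamma_M=\R^2\times\theta$ with $\theta=\{w_2>|w_1|/2\}$. Since $\R^2\setminus Z$ is dense, $\ol{\gamma_N}=\R^2\times\ol{\gamma_0^2}\supset\R^2\times\ol\theta=\ol{\gamma_M}$, so $f$ satisfies Definition~\ref{def:causalmnf}(b); yet over $Z$ one has $\lambda_{N}=(\gamma_0^1)^\circ\not\subset\theta^\circ=\lambda_M$, so $\Lambda_f\aconv\lambda_N\not\subset\lambda_M$. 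The published proof is silent on exactly this point (it reduces to a statement about two closed convex cones $\gamma_1\subset T_xM$, $\gamma_2\subset T_{f(x)}N$ without saying whether these are the fibers of $\ol{\gamma_M},\ol{\gamma_N}$ or the fiber closures $\ol{\gamma_{M,x}},\ol{\gamma_{N,f(x)}}$), so your route is no worse than the original; but the step you label as routine is precisely where the argument, and indeed the statement itself with the total-space closures of Definition~\ref{def:causalmnf}(b), breaks down.
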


\begin{proof}
Notice first that $Tf(\ol{\gamma_M})\subset\ol{\gamma_N}$ if and only if $\ol{\gamma_M}\conv \Gamma_{Tf}\subset\ol{\gamma_N}$.
At each $(x,y)\in \Gamma_f$, the vector subspace $(\Lambda_f)_{(x,y)}\subset T_{(x,y)}^*(M\times N)$ is the orthogonal to the vector subspace $(\Gamma_{Tf})_{(x,y)}\subset T_{(x,y)}(M\times N)$.
Hence, setting $E_1=T_xM$, $E_2=T_yN$, $\Gamma=(\Gamma_{Tf})_{(x,y)}$, we are reduced to prove that for two real finite-dimensional vector spaces $E_1$ and $E_2$, two closed convex cones $\gamma_1\subset E_1$, $\gamma_2\subset E_2$ and a linear graph $\Gamma\subset E_1\times E_2$ one has
\eqn
&&\gamma_1\conv \Gamma\subset \gamma_2\Leftrightarrow \Gamma^\perp\aconv\gamma_2^\circ\subset \gamma_1^\circ.
\eneqn
By hypothesis, $\Gamma$ is the graph of a linear map $u\cl E_1\to E_2$.
Therefore $\Gamma^\perp$ is the graph of the opposite transposed map $-{}^tu\cl E_2^*\to E_1^*$ and the result is clear since
\eqn
&& u(\gamma_1)\subset\gamma_2^{\circ\circ}\Leftrightarrow {}^t u(\gamma_2^\circ)\subset\gamma_1^\circ
\eneqn
and $\gamma_2 = \gamma_2^{\circ\circ}$ (since $\gamma_2$ is closed and convex).
\end{proof}

\begin{definition}\label{def:ctcone}
(i)
A \define{constant cone} contained in $\gamma$ is a triple $(\phi,U, \theta)$ where $\phi \colon U \to \R^d$ is a chart and $\theta \subset \R^d$ is an open convex cone, such that in this chart, $U \times \theta \subset \gamma$ (that is, $\phi(U) \times \theta \subset T\phi(\gamma|_U)$).
A constant cone $(\phi,U, \theta)$ will often be denoted simply by $U \times \theta$.

\spa
(ii)
A \define{basis of constant cones} contained in $\gamma$ is a family of constant cones whose union is $\gamma$.
\end{definition}

Although they are obvious, we state the two next lemmas which will be of frequent use.

\begin{lemma}\label{le:local}
Let $(M,\lambda)$ be a causal manifold.
Then there exists a basis of constant cones contained in $\gamma$.
\end{lemma}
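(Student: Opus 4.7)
The plan is to construct, for each point $(x_0,v_0)\in\gamma$, one constant cone of the desired basis. Since every point of $\gamma$ will belong to such a cone, the union of the resulting family is $\gamma$, hence the family forms a basis of constant cones in the sense of Definition~\ref{def:ctcone}.

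First I would pick a chart $\phi\cl U_0\to\R^d$ around $x_0$, so that the restriction of $TM$ to $U_0$ is trivialized as $U_0\times\R^d$. Under this trivialization, identify $v_0$ with a vector $w_0\in\R^d$. Because $\gamma$ is open in $TM$ and contains $(x_0,v_0)$, there is a product neighborhood $U\times W\subset U_0\times\R^d$ of $(x_0,w_0)$ entirely contained in $\gamma$, where $U$ is an open neighborhood of $x_0$ and $W$ is an open convex (for instance, a round ball) neighborhood of $w_0$ in $\R^d$.

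Next I would set $\theta\eqdot\R_{>0}\cdot W\subset\R^d$. This set is open (being a union of the open sets $sW$ for $s>0$) and is a cone by construction. The only point that needs a short verification is convexity: given $s_1w_1,s_2w_2\in\theta$ with $s_i>0$ and $w_i\in W$, and given $t\in[0,1]$, I rewrite
\[
ts_1w_1+(1-t)s_2w_2 = s\bl\alpha w_1+(1-\alpha)w_2\br,\qquad s\eqdot ts_1+(1-t)s_2>0,\ \alpha\eqdot ts_1/s\in[0,1],
\]
and observe that $\alpha w_1+(1-\alpha)w_2\in W$ by convexity of $W$, whence the combination lies in $\R_{>0}\cdot W=\theta$.

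Finally I would check that $U\times\theta\subset\gamma$: for each $x\in U$ we already have $W\subset\gamma_x$, and since $\gamma_x$ is a cone (it is stable under $\R_{>0}$) we obtain $\R_{>0}\cdot W\subset\gamma_x$, i.e.\ $\theta\subset\gamma_x$. Thus $(\phi\vert_U,U,\theta)$ is a constant cone contained in $\gamma$ and containing $(x_0,v_0)$. There is no real obstacle here; the only subtle point is the convexity of $\theta$, which works precisely because $\gamma$ is assumed convex fiberwise, so enlarging the convex neighborhood $W$ to the convex cone $\R_{>0}\cdot W$ stays inside $\gamma$.
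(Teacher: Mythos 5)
Your proposal is correct and is essentially the expanded version of the paper's one-line proof ("It follows immediately from the fact that $\gamma\subset TM$ is open"): you spell out exactly how openness of $\gamma$ yields a product neighborhood $U\times W$ in a chart and how conicity of $\gamma$ lets you enlarge $W$ to $\theta=\R_{>0}\cdot W$ while staying in $\gamma$. One small slip in your concluding remark: the inclusion $\R_{>0}\cdot W\subset\gamma_x$ follows from $\gamma_x$ being a cone (as you correctly invoke in the body of the proof), not from its convexity, which is only used to ensure that $\theta$ is convex.
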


\begin{proof}
It follows immediately from the fact that $\gamma\subset TM$ is open.
\end{proof}

\begin{lemma}\label{le:localbasis}
Let $(M,\lambda)$ be a causal manifold and let $A\subset M$.
Then $\gamma\subset N(A)$ if and only if there exists a basis of constant cones contained in $\gamma$ such that for $U\times\theta$ belonging to this basis, $U\cap(U\cap A+\theta)\subset A$.
\end{lemma}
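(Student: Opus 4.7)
The plan is to interpret this lemma as a direct unpacking of the definition of the Whitney normal cone $N(A)$ recalled in the appendix (after~\cite{KS90}). The key input is the following local chart characterization: a tangent vector $v_0 \in T_{x_0}M$ belongs to $N_{x_0}(A)$ if and only if, in some (equivalently, every) local chart around $x_0$, there exist an open neighborhood $U$ of $x_0$ and an open convex cone $\theta$ with $v_0 \in \theta$ such that $U \cap (U \cap A + \theta) \subset A$. Once this equivalence is in hand, both implications become essentially bookkeeping.

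For the ``if'' direction I would argue pointwise: given $(x_0; v_0) \in \gamma$, use that the hypothesized family is a basis of $\gamma$ to select a constant cone $U \times \theta$ in it with $x_0 \in U$ and $v_0 \in \theta$. The assumed inclusion $U \cap (U \cap A + \theta) \subset A$ then matches the characterization verbatim, so $v_0 \in N_{x_0}(A)$, whence $\gamma \subset N(A)$.

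For the ``only if'' direction I would again work pointwise. Given $(x_0; v_0) \in \gamma$, first apply Lemma~\ref{le:local} to choose a constant cone $(\phi_0, U_0, \theta_0)$ contained in $\gamma$ with $x_0 \in U_0$ and $v_0 \in \theta_0$. The hypothesis $v_0 \in N_{x_0}(A)$, applied in the chart $\phi_0$, produces an open neighborhood $U \subset U_0$ of $x_0$ and an open convex cone $\theta \ni v_0$ with $U \cap (U \cap A + \theta) \subset A$. Replacing $\theta$ by the open convex cone $\theta \cap \theta_0$ keeps $v_0$ inside it and enforces $U \times \theta \subset U_0 \times \theta_0 \subset \gamma$, so $(\phi_0\vert_U, U, \theta \cap \theta_0)$ is a constant cone in $\gamma$ with the required translation property. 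The family of all such constant cones, indexed by the points of $\gamma$, is then the required basis.

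The main obstacle, such as there is one, is purely organisational: one must confirm that the local characterization of $N(A)$ invoked above agrees with the definition of the appendix in the ``open neighborhood $+$ open cone'' form (rather than the sequential form), and one must shrink the cone produced by that characterization to $\theta \cap \theta_0$ to guarantee that the constant cones in the basis actually sit inside $\gamma$. No analytic work beyond Lemma~\ref{le:local} is needed.
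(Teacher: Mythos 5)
Your proof is correct and takes essentially the same route as the paper, which simply records that the statement is a reformulation of the chart characterization of the strict normal cone (equations~\eqref{eq:strictNC} and~\eqref{eq:strictNC2} of the appendix); you unpack this faithfully, and the only point worth flagging is that \eqref{eq:strictNC2} yields an open cone that need not be convex, so one should (as you implicitly do) shrink it to an open convex sub-cone containing $v_0$ before intersecting with $\theta_0$ to produce a genuine constant cone.
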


\begin{proof}
This is a reformulation of~\eqref{eq:strictNC}.
\end{proof}

\begin{example}\label{exa:Lorentz1}
For the classical notions of Lorentzian manifold, spacetime and globally hyperbolic spacetime, references are made to~\cites{BGP07,BEE96,HE73,MS08}.

A \define{Lorentzian manifold} $(M,g)$ is a {\em connected} $C^\infty$-manifold $M$ with a $C^\infty$ nondegenerate bilinear form $g$ on $M$ of signature $(+,-,\dots,-)$.
Let
\eqn
&&g_{>0}=\{(x;v)\in TM; g_x(v,v)>0\}.
\eneqn
By Lemma~\ref{lem:quadra}, $g_{>0}$ has at most two connected components.
The Lorentzian manifold $(M,g)$ is \define{time-orientable} if the cone $g_{>0}$ has itself two connected components.
It is \define{time-oriented} if furthermore one connected component has been chosen.
\end{example}

\begin{definition}\label{def:spacetime}
A \define{Lorentzian spacetime} is a connected time-oriented Lorentzian manifold.
\end{definition}
Let $(M,g)$ be a Lorentzian spacetime.
We denote by $(M,\gamma_g)$, or $(M,\gamma)$ if there is no risk of confusion, the associated causal manifold.

\subsection{$\gamma$-sets and $\gamma$-topology}

The definition of the normal cone $N(A)$ to a subset $A$ as well as of the cone $D(A)$ and their main properties are recalled in the appendix.
Recall that $N(A)$ is an open convex cone.

\begin{definition}\label{def:gammaset}
Let $(M,\gamma)$ be a causal manifold.
A subset $A\subset M$ is a \define{$\gamma$-set} if $\gamma \subset N(A)$.
\end{definition}
Applying Lemma~\ref{le:localbasis}, we get:
\eq\label{eq:strictNC3}
&&\mbox{$A$ is a $\gamma$-set}\Leftrightarrow\left\{\parbox{50ex}{there exists a basis of constant cones $U\times\theta$ contained in $\gamma$ such that $U\cap(U\cap A+\theta)\subset A$.
}\right.
\eneq

\begin{proposition}\label{pro:gammaset1}
Let $(M,\gamma)$ be a causal manifold.
\bnum
\item
A set $A$ is a $\gamma$-set if and only if $\gamma_x\subset N_x(A)$ for all $x\in\partial A$.
\item
A subset $A$ of $M$ is a $\gamma$-set if and only if $M\setminus A$ is a $\gamma^a$-set.
\item
Let $(M_i,\gamma_i)$ \lp$i=1,2$\rp\, be causal manifolds.
Assume that $A_i$ is a $\gamma_{i}$-set for $i=1,2$.
Then $A_1\times A_2$ is a $(\gamma_{1}\times\gamma_2)$-set.
\item
Let $\gamma_1\subset\gamma_2$ be two open convex cones in $TM$.
If a set $A$ is a $\gamma_2$-set, then it is a $\gamma_1$-set.
\enum
\end{proposition}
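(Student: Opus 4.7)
\medskip
\noindent\textbf{Proof plan.}

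The plan is to rely on the basic calculus of the normal cone $N(A)$ recalled in the appendix (in particular the characterization via constant cones encoded in~\eqref{eq:strictNC} and already exploited in Lemma~\ref{le:localbasis}), and to treat the four items in order.

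\smallskip
For \textbf{(i)}, I would first observe that $N_x(A) = T_xM$ as soon as $x\in\Int(A)$ or $x\in M\setminus\ol{A}$: indeed, in the first case any small neighborhood $U$ of $x$ sits in $A$ so $(U\cap A)+tV=U+tV\subset A$ for $t$ small, and in the second case $U\cap A=\varnothing$ for $U$ a sufficiently small neighborhood of $x$, so the inclusion $(U\cap A)+tV\subset A$ is trivially satisfied. Since $\gamma_x\subset T_xM=N_x(A)$ automatically at such points, the only nontrivial constraint in $\gamma\subset N(A)$ is imposed at the points $x\in\partial A=\ol{A}\setminus\Int(A)$, which gives the claimed equivalence.

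\smallskip
For \textbf{(ii)}, the key input is the identity
\[
N(M\setminus A)=N(A)^{a},
\]
which is a standard property of the Whitney normal cone (recalled in the appendix). Given this, $\gamma^{a}\subset N(M\setminus A)$ is equivalent to $\gamma^{a}\subset N(A)^{a}$, that is, to $\gamma\subset N(A)$, which is exactly the statement.

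\smallskip
For \textbf{(iii)}, I would work directly with constant cones. By Lemma~\ref{le:local} applied to each factor, there are bases of constant cones $U_i\times\theta_i$ contained in $\gamma_i$ ($i=1,2$), and the products $(U_1\times U_2)\times(\theta_1\times\theta_2)$ form a basis of constant cones contained in $\gamma_1\times\gamma_2$ in $T(M_1\times M_2)$. Since $A_i$ is a $\gamma_i$-set, Lemma~\ref{le:localbasis} (refining the bases if necessary) gives $U_i\cap(U_i\cap A_i+\theta_i)\subset A_i$, and taking the cartesian product of these two inclusions yields
\[
(U_1\times U_2)\cap\bigl((U_1\times U_2)\cap(A_1\times A_2)+(\theta_1\times\theta_2)\bigr)\subset A_1\times A_2,
\]
so by the other direction of Lemma~\ref{le:localbasis}, $A_1\times A_2$ is a $(\gamma_1\times\gamma_2)$-set.

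\smallskip
Finally, \textbf{(iv)} is immediate from the definition: $\gamma_2\subset N(A)$ and $\gamma_1\subset\gamma_2$ give $\gamma_1\subset N(A)$.

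\smallskip
The only real subtlety is in (ii), which depends on the antipodal identity $N(M\setminus A)=N(A)^{a}$; every other item is either a direct unwinding of the definition or a straightforward use of the constant-cone characterization.
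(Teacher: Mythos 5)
Your proof is correct, and items (i), (ii), (iv) follow the paper's route exactly: (i) reduces to $N_x(A)=T_xM$ off $\partial A$ (Proposition~\ref{prop:cones}~(iv), which you re-derive by hand), (ii) invokes $N(M\setminus A)=N(A)^a$ (Proposition~\ref{prop:cones}~(ii)), and (iv) is the monotonicity of inclusion. For (iii) you take a genuinely different but equivalent route: the paper simply cites the Whitney-cone product inclusion $N(A)\times N(B)\subset N(A\times B)$ (Proposition~\ref{prop:cones}~(x)) and then combines $\gamma_i\subset N(A_i)$, whereas you unwind the definition via the constant-cone characterization of Lemma~\ref{le:localbasis}, checking that the product of the two bases of constant cones is a basis for $\gamma_1\times\gamma_2$ and that the defining inclusions multiply. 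Your version is essentially a self-contained re-derivation of the special case of Proposition~\ref{prop:cones}~(x) that is needed; it is slightly longer but does not rely on the appendix result, while the paper's is a one-line citation. Both are correct, and the small informality in your (i) (phrasing the conic condition as ``$+tV$ for $t$ small'' rather than as a conic open neighborhood as in~\eqref{eq:strictNC2}) is harmless since the intended argument is the standard one.
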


\begin{proof}
(i)
This follows from Proposition~\ref{prop:cones}~(iv).

\spa
(ii)
This follows from Propositon~\ref{prop:cones}~(ii).

\spa
(iii)
This follows immediately from Proposition~\ref{prop:cones}~(x).

\spa
(iv)
Indeed, $\gamma_2 \subset N(A)$ implies $\gamma_1 \subset N(A)$.
\end{proof}

\begin{remark}
The converse to Proposition~\ref{pro:gammaset}~(iii) is true (see the remark following Proposition~\ref{prop:cones}) but we do not need it.
\end{remark}

\begin{proposition}\label{pro:gammaset}
Let $(M,\gamma)$ be a causal manifold.
\bnum
\item
The family of $\gamma$-sets is closed under arbitrary unions and intersections.
\item
The family of $\gamma$-sets is closed under taking closure and interior.
\item
If $A$ is a $\gamma$-set, then $\clos{\Int{A}} = \clos{A}$ and $\Int{\clos{A}} = \Int{A}$.
\item
If $A$ is a $\gamma$-set and $\Int{A} \subset B \subset \ol{A}$, then $B$ is a $\gamma$-set.
\enum
\end{proposition}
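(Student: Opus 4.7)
Throughout, I rely on the local characterization~\eqref{eq:strictNC3}: $A$ is a $\gamma$-set iff for every $(x_0,v_0)\in\gamma$ there exists a constant cone $(U,\theta)$ with $(x_0,v_0)\in U\times\theta\subset\gamma$ satisfying $U\cap((U\cap A)+\theta)\subset A$. A key observation used repeatedly is that this condition on $(U,\theta)$ is inherited by any refined constant cone $(U',\theta')$ with $U'\subset U$ and $\theta'\subset\theta$.

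For (i), the set-theoretic identities
\[
U\cap\bigl((U\cap \bigcup_j A_j)+\theta\bigr)=\bigcup_j\, U\cap\bigl((U\cap A_j)+\theta\bigr),\qquad U\cap\bigl((U\cap\bigcap_j A_j)+\theta\bigr)\subset \bigcap_j\, U\cap\bigl((U\cap A_j)+\theta\bigr)
\]
yield both stability claims, once we exhibit a constant cone $(U,\theta)\subset\gamma$ around a given point satisfying the $\gamma$-set condition for every $A_j$ simultaneously. This uniformity is the main technical point; I expect to resolve it either by invoking the openness of the normal cone $N(A)$ in $TM$ from the appendix, or by working directly with the fibrewise criterion $\gamma_x\subset N_x(A)$ of Proposition~\ref{pro:gammaset1}(i).

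For (ii), closure is handled by approximating $y\in\ol{A}\cap U$ by $y_n\in A$ and passing to the limit in $y_n+w\in U\cap((U\cap A)+\theta)\subset A$, whence $z=y+w\in\ol{A}$; interior is handled by choosing a neighborhood $V$ of $y\in\Int(A)\cap U$ with $V\subset A\cap U$ and $V+w\subset U$, so that $V+w\subset A$ provides an open neighborhood of $z=y+w$ inside $A$. For (iii), the inclusion $\ol{A}\subset\ol{\Int(A)}$ is proved as follows: pick $x\in\ol{A}$, $v\in\gamma_x$, and a constant cone $(U,\theta)\ni(x,v)$; for $y_n\to x$ in $A$ and small $\delta>0$, the rewriting $y_n+\delta v+\eta=y_n+(\delta v+\eta)$ with $\eta$ small shows $y_n+\delta v\in\Int(A)$ (by the neighborhood argument of (ii)), hence $x+\delta v\in\ol{\Int(A)}$, and letting $\delta\to 0^+$ yields $x\in\ol{\Int(A)}$. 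For $\Int(\ol{A})\subset\Int(A)$: shrink $(U,\theta)$ so that $U\subset\ol{A}$, making $A\cap U$ dense in $U$; then for any $z\in U$ and small $\delta>0$, pick $y\in A\cap U$ close to $z-\delta v$, so that $z-y$ lies close to $\delta v\in\theta$ and hence in $\theta$ by openness, giving $z=y+(z-y)\in U\cap((U\cap A)+\theta)\subset A$; thus $U\subset A$ and $x\in\Int(A)$.

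For (iv), I verify the condition for $B$ directly using a constant cone $(U,\theta)$ adapted to $A$. Given $z=y+w\in U$ with $y\in B\cap U\subset\ol{A}$ and $w\in\theta$, approximate $y$ by $y_n\in A\cap U$; for fixed large $n$ and $\eta$ in a sufficiently small neighborhood of the origin, $\tilde w=w+(y-y_n)+\eta$ lies in $\theta$ (openness around $w$), $y_n+\tilde w=z+\eta$ lies in $U$, and $y_n\in A\cap U$, so $z+\eta\in U\cap((U\cap A)+\theta)\subset A$ for all such $\eta$. Hence $z$ has a neighborhood in $A$, so $z\in\Int(A)\subset B$. The main obstacle overall is the uniformity step in~(i); all remaining parts reduce to routine limit and density manipulations combining the constant-cone characterization with openness of $\theta$.
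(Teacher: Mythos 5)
Your handling of parts (ii)--(iv) is correct, and it does differ from the paper's route: the paper deduces them almost immediately from the properties of the strict normal cone collected in Proposition~\ref{prop:cones} of the appendix (items (vi), (vii), (viii)), whereas you re-derive them from scratch with constant cones and limit/density arguments. Both work; the paper's is shorter because the real content has been pushed into the appendix.

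For part~(i), however, there is a genuine gap, and it is precisely at the point you flag as ``the main technical point.'' Your reduction is: find, for a given $(x_0,v_0)\in\gamma$, a \emph{single} constant cone $(U,\theta)\subset\gamma$ containing $(x_0,v_0)$ such that $U\cap((U\cap A_j)+\theta)\subset A_j$ holds \emph{for every} $j$. You propose to resolve this ``by invoking the openness of the normal cone $N(A)$'' or ``by working directly with the fibrewise criterion $\gamma_x\subset N_x(A)$,'' but neither suggestion produces the uniform cone: openness of each $N(A_j)$ gives a cone $(U_j,\theta_j)$ depending on $j$, and the fibrewise criterion only restates what it means for each $A_j$ to be a $\gamma$-set. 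Nothing in either statement rules out that the cones $(U_j,\theta_j)$ shrink as $j$ varies. The uniformity \emph{is} in fact true, but proving it requires Proposition~\ref{pro:lgtop}: for a convex chart $U$ with $U\times\theta\subset\gamma$, a $\gamma$-set $A$ automatically satisfies $U\cap((U\cap A)+\theta)\subset A$ (since $A\cap U$ is a $(U\times\theta)$-set by locality of $N$ and Proposition~\ref{pro:gammaset1}~(iv)). That proposition is logically independent of the present statement and could be moved earlier, but you do not invoke it or reprove its content, so the proof as written does not close. By contrast, the paper's proof of (i) is of a completely different shape: it works with the outgoing cone $D(A)=C(M\setminus A,A)$, takes a witnessing sequence $(x_n,y_n,c_n)$ with $x_n\in A_{\rho(n)}$, $y_n\notin A$, and locates a boundary-crossing point $z_n$ on the segment $[x_n,y_n]$ to produce $(z_n,v_n)\in D(A_{\rho(n)})\subset TM\setminus\gamma$ converging to $(x,v)$, then concludes $(x,v)\notin\gamma$ by openness of $\gamma$. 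You will want either to adopt that segment argument, or to state and prove the convex-chart version of Proposition~\ref{pro:lgtop} first and use it to justify the uniformity.
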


\begin{proof}
We shall use Notation~\eqref{eq:gl}.

\spa
(i)
Let $\{A_i\}_{i \in I}$ be a family of $\gamma$-sets, let $A = \bigcup_{i \in I} A_i$ and let $(x,v) \in D(A)$.
We shall prove that $(x,v)\notin\gamma$.

We choose a chart at $x$.
There is a sequence $\{(x_n,y_n,c_n)\}_n$ in $A\times(M\setminus A)\times\R_{>0}$ such that $x_n, y_n \to[n] x$ and $v_n = c_n (y_n - x_n) \to[n]v$.
Choose a function $\rho \cl \N \to I$ such that $x_n \in A_{\rho(n)}$.
Define
\eqn
&&t_n = \sup \{ t \in [0,1] \mid [x_n, x_n + t (y_n - x_n)] \subset A_{\rho(n)} \},\\
&&z_n = x_n + t_n (y_n - x_n),\\
&&x_{m,n} =
\begin{cases}
x_n + (t_n - 1/m)(y_n - x_n) \; (\text{for } m > 1/t_n) &\text{if } t_n\neq0,\\
x_n &\text{if } t_n = 0.
\end{cases}
\eneqn
Then $x_{m,n}\in A_{\rho(n)}$.

For all $n \in \N$, since $t_n$ is a supremum, there exists a sequence
$\{\delta_{m,n}\}_m$ in $\R_{\geq0}$ with $\delta_{m,n} \to[m] 0$ such that
\eqn
&&y_{m,n} = x_n + (t_n + \delta_{m,n})(y_n - x_n) \notin A_{\rho(n)}.
\eneqn
We also have $x_{m,n}, y_{m,n} \to[m] z_n$.

If $t_n = 0$, then $\delta_{m,n} >0$, so we can define
\eqn
&&c_{m,n} =
\begin{cases}
c_n/(1/m + \delta_{m,n}) &\text{if } t_n\neq0,\\
c_n/\delta_{m,n}&\text{if } t_n=0.
\end{cases}
\eneqn
Then $c_{m,n} >0$ and $c_{m,n}(y_{m,n} - x_{m,n}) = v_n$.
This proves that $(z_n, v_n) \in D(A_{\rho(n)}) \subset TM \setminus \gamma$.
Since $\gamma$ is open and $(z_n, v_n) \to[n] (x,v)$, this implies that $(x,v) \notin\gamma$.
Therefore $A$ is a $\gamma$-set.

\spa
The case of an intersection is deduced from (i) by Proposition~\ref{pro:gammaset1}~(ii).

\spa
(ii) follows immediately from Proposition~\ref{prop:cones}~(vi).

\spa
(iii) follows immediately from Proposition~\ref{prop:cones}~(vii).

\spa
(iv) The hypothesis and (iii) imply that $\Int{B} = \Int{A}$ and $\ol{B} = \ol{A}$.
It then follows from Proposition~\ref{prop:cones}~(viii) that $N(A)=N(B)$ and $B$ is a $\gamma$-set.
\end{proof}

\subsubsection*{$\gamma$-sets in vector spaces}

In vector spaces endowed with constant cones, $\gamma$-sets are easy to characterize.

Let $\BBV$ be a real finite-dimensional vector space, let $\Omega$ be a nonempty convex open subset of $\BBV$ and let $\gamma_0$ be an open convex cone in $\BBV$.
Set $\gamma=\Omega \times \gamma_0\subset T\Omega$, so that $(\Omega, \gamma)$ is a causal manifold.

\begin{proposition}\label{pro:lgtop}
A subset $A$ of $\Omega\subset\BBV$ is a $\gamma$-set if and only if
\eq\label{eq:lgtop}
\Omega \cap (A + \gamma_0) \subset A.
\eneq
\end{proposition}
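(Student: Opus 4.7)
The ``if'' direction is a tautology once the characterization~\eqref{eq:strictNC3} is invoked: the single chart $(\id_\Omega, \Omega, \gamma_0)$ is a basis of constant cones contained in $\gamma = \Omega \times \gamma_0$, and the hypothesis $\Omega \cap (A + \gamma_0) \subset A$ is precisely the required condition on this single basis element.

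For the ``only if'' direction, fix $x \in A$ and $v \in \gamma_0$ with $x + v \in \Omega$. By convexity of $\Omega$, the segment $[x, x+v]$ lies in $\Omega$, and the goal is to show $x + v \in A$. The plan is a supremum argument along the segment. Set
\[
S = \{t \in [0,1] : x + sv \in A \text{ for all } s \in [0,t]\}, \qquad T = \sup S.
\]
Note that $S$ is a down-set in $[0,1]$ containing $0$. Use~\eqref{eq:strictNC3} to fix a basis of constant cones $\{U_i \times \theta_i\}$ contained in $\gamma$ witnessing that $A$ is a $\gamma$-set; by Proposition~\ref{pro:gammaset1}(ii), $M \setminus A$ is a $\gamma^a$-set, so similarly fix a basis $\{U'_j \times \theta'_j\}$ contained in $\gamma^a$ witnessing this for $M \setminus A$. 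Let $y = x + Tv$ and split into cases. If $y \in A$, choose $U_i \times \theta_i$ with $(y, v) \in U_i \times \theta_i$: the openness of $U_i$ and of $\theta_i$ (a cone containing $v$), together with $U_i \cap (U_i \cap A + \theta_i) \subset A$, produce $\epsilon > 0$ such that $y + sv \in A$ for $s \in [0, \epsilon]$; if $T < 1$ this contradicts the sup, and if $T = 1$ it yields $x + v \in A$ and we are done. If $y \notin A$, choose $U'_j \times \theta'_j$ with $(y, -v) \in U'_j \times \theta'_j$: the symmetric argument produces $\epsilon > 0$ with $x + (T - s)v = y - sv \in M \setminus A$ for $s \in (0, \epsilon]$. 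But $T - s < T = \sup S$ and $S$ being a down-set force $T - s \in S$, hence $x + (T - s)v \in A$, a contradiction.

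The main obstacle is precisely the possibility that the supremum $T$ is not attained, i.e.\ $x + Tv \notin A$; a naive ``clopen'' sup argument fails because $A$ need not be open or closed along the segment. The resolution comes from the symmetry of the $\gamma$-set notion (Proposition~\ref{pro:gammaset1}(ii)): the complement of a $\gamma$-set is a $\gamma^a$-set, so the property ``not in $A$'' also propagates locally, in the direction $-v$. This two-sided propagation at the boundary point $y = x + Tv$ is the heart of the argument; everything else is a routine openness-of-cones calculation.
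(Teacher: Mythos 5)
Your proof is correct, but it takes a slightly different route in the ``only if'' direction than the paper does, and the extra ingredient you bring in turns out to be avoidable.

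Your ``if'' direction (using the singleton basis $\{\Omega\times\gamma_0\}$) is fine and marginally cleaner than the paper's, which takes the family of all $W\times\gamma_0$ with $W\subset\Omega$ open. For the ``only if'' direction, both arguments run a supremum along the segment $[x,x+v]$; you split into two cases at $x_\infty = x + t_\infty v$ according to whether $x_\infty\in A$ or not, invoking Proposition~\ref{pro:gammaset1}~(ii) to convert the second case into a propagation statement for $M\setminus A$ in the direction $-v$. The paper avoids the case split and the appeal to the complement: since $(x_\infty,v)\in\gamma\subset N(A)$, there is a neighborhood $W$ of $x_\infty$ with $W\cap(A\cap W+\R_{>0}v)\subset A$, and the points $x+tv$ with $t<t_\infty$ already lie in $A\cap W$ for $t$ near $t_\infty$; taking $s=t_\infty-t>0$ shows $x_\infty = (x+tv)+sv\in A$, and slightly larger $s$ extend the segment past $t_\infty$. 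In other words, the strict normal cone condition at $x_\infty$ automatically absorbs $x_\infty$ into $A$, so the obstacle you describe (``the supremum may not be attained'') never materializes and case 2 is vacuous. Both proofs are valid; yours buys transparency (it is explicit about what might go wrong), while the paper's buys economy (it uses only $\gamma\subset N(A)$, no complement, no case split).
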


\begin{proof}
(i)
Suppose that $A$ satisfies ~\eqref{eq:lgtop}.
Then, for each open subset $W\subset\Omega$, we get
\eqn
&&W \cap (A\cap W + \gamma_0) \subset A.
\eneqn
It follows from~\eqref{eq:strictNC3} that $A$ is a $\gamma$-set.

\spa
(ii)
Conversely, let $A \subset \Omega$ be a $\gamma$-set.
Let $x \in A$ and let $v \in \gamma_0$ with $x + v \in \Omega$.
We shall prove that $x + v \in A$.
Define $t_\infty = \sup \{ t \in[0,1]; [x, x + tv] \subset A\}$ and let $x_\infty = x + t_\infty v$.
Then $x_\infty\in\Omega$ and $(x_\infty,v) \in \gamma\subset N(A)$.
Therefore, there is a neighborhood $W \subset \Omega$ of $x_\infty$ such that $W \cap (A \cap W + \R_{>0}v) \subset A$.
Since $W$ is a neighborhood of $x_\infty$, there exists $\eta >0$ such that $[x_\infty, x_\infty+\eta v] \subset A$, which is a contradiction unless $t_\infty=1$.
\end{proof}

\subsubsection*{$\gamma$-topology}

Proposition~\ref{pro:gammaset} allows us to generalize~\cite{KS90}*{Def.~3.5.1}.

\begin{definition}\label{def:gammatop}
Let $(M,\gamma)$ be a causal manifold.
The \define{$\gamma$-topology} on $M$ is the topology for which the open sets are the open sets of $M$ which are $\gamma$-sets.
\end{definition}

A subset $A\subset M$ is called $\gamma$-open if it is open for the $\gamma$-topology.
In other words, if it is open in the usual topology and is a $\gamma$-set.
\begin{remark}
We shall not use the term $\gamma$-closed since a set which is closed for the $\gamma$-topology is not in general a $\gamma$-set, but is a $\gamma^a$-set.
\end{remark}
As in~\cite{KS90}*{Def.~3.5.1}, define the $\gamma_0$-topology on $\Omega$ by saying that an open set $U$ of $\BBV$ is $\gamma_0$-open if
\eq\label{eq:lgtop3}
&&U=\Omega\cap (U+\gamma_0).
\eneq

Applying Proposition~\ref{pro:lgtop}, we get that the $\gamma$-topology and the $\gamma_0$-topology on $\Omega$ coincide.

\subsection{The chronological preorder}

\begin{definition}\label{def:chronofut}
For $A\subset M$, we denote by $\futi{A}$ the intersection of all the $\gamma$-sets which contain $A$ and call it the \define{chronological future} of $A$.
We set $\futi{x}=\futi{\{x\}}$.
\end{definition}
Note that a set $A$ is a $\gamma$-set if and only if $\futi{A}=A$.

\begin{lemma}\label{le:gDel2}
The relation $y \in \futai{x}$ is a preorder.
\end{lemma}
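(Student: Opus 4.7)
The plan is to verify the two axioms of a preorder — reflexivity and transitivity — for the relation $x \preceq y$ defined by $y \in \futai{x}$. Reflexivity will be immediate from the definition of $\futai{x}$ as an intersection: every $\gamma^a$-set containing $x$ tautologically contains $x$, so $x$ lies in their intersection, and $x \in \futai{x}$.

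For transitivity, the key step is to observe that $\futai{x}$ is itself a $\gamma^a$-set, hence the smallest $\gamma^a$-set containing $x$. This will follow from Proposition~\ref{pro:gammaset}(i) applied to the causal manifold $(M,\gamma^a)$: since $\gamma^a$ is an open convex cone in $TM$ with $\gamma^a_x \neq \varnothing$ for every $x \in M$, the pair $(M,\gamma^a)$ is a causal manifold, and the proposition tells us that arbitrary intersections of $\gamma^a$-sets are $\gamma^a$-sets. Thus the intersection defining $\futai{x}$ is a $\gamma^a$-set.

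With this minimality in hand, the transitivity argument is purely formal. Suppose $y \in \futai{x}$ and $z \in \futai{y}$, and let $A$ be an arbitrary $\gamma^a$-set containing $x$. Then $y \in \futai{x} \subset A$, so $A$ is a $\gamma^a$-set containing $y$; by minimality of $\futai{y}$, this gives $\futai{y} \subset A$, and therefore $z \in A$. Intersecting over all such $A$ yields $z \in \futai{x}$.

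No real obstacle is anticipated — the entire argument is elementary set-theoretic bookkeeping once one observes that $\futai{\cdot}$ is a closure operator on subsets of $M$ (idempotent, extensive, monotone), from which both axioms fall out immediately.
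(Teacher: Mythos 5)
Your argument matches the paper's proof in its essential structure: by Proposition~\ref{pro:gammaset}~(i), $\futai{x}$ is itself a $\gamma$-set, hence the smallest $\gamma$-set containing $x$, and transitivity then follows from this minimality exactly as you describe, with reflexivity immediate. However, you consistently write ``$\gamma^a$-set'' where Definition~\ref{def:chronofut} calls for ``$\gamma$-set'' --- the chronological future $\futai{A}$ is defined as the intersection of all $\gamma$-sets (not $\gamma^a$-sets) containing $A$, and the relevant causal manifold is $(M,\gamma)$ rather than $(M,\gamma^a)$ --- so the correct proof is obtained by replacing $\gamma^a$ with $\gamma$ uniformly throughout your argument; with that cosmetic fix it coincides with the paper's.
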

\begin{proof}
Let $y\in\futai{x}$ and $z\in\futai{y}$.
Then $\futai{x}$ is a $\gamma$-set which contains $y$ and $\futai{y}$ is the smallest $\gamma$-set which contains $y$.
Therefore, $\futai{y}\subset\futai{x}$ and $z\in\futai{x}$.
\end{proof}

\begin{notation}
We denote by $\preceql$ the preorder given by $x\preceql y$ if $y\in\futai{x}$ and we denote by $\gDel$ the graph of this preorder.
Hence, using the notations~\eqref{eq:J+}, $\futai{x}=J^+_{\preceql}(x)$ and $\gDel=\Delta_{\preceql}$.
We call $\preceql$ the \define{chronological preorder}.
\end{notation}

\begin{remark}\label{not:causalI2}
Recall Notation~\ref{not:causalI1}.
On $(I,+)$ the chronological preorder $\preceql$ is the usual order $ \leq$.
\end{remark}

\begin{proposition}\label{pro:Iopen}
Let $A\subset M$ be a closed subset.
Then $\futai{A} \setminus A$ is open.
\end{proposition}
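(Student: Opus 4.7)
The plan is to show that $\futai{A} \setminus A \subset \Int(\futai{A})$; once this is established, since $A$ is closed, $M \setminus A$ is open and $\futai{A} \setminus A = \Int(\futai{A}) \cap (M \setminus A)$ is then manifestly open.

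First I would observe that $\futai{A}$ is itself a $\gamma$-set: it is defined as the intersection of all $\gamma$-sets containing $A$, and the family of $\gamma$-sets is closed under arbitrary intersections by Proposition~\ref{pro:gammaset}~(i). Next, by Proposition~\ref{pro:gammaset}~(ii), its interior $B \eqdot \Int(\futai{A})$ is also a $\gamma$-set, and by Proposition~\ref{pro:gammaset}~(iii) applied to $\futai{A}$ we have $\clos{B} = \clos{\Int(\futai{A})} = \clos{\futai{A}}$.

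The key step is then to build a $\gamma$-set containing $A$ which avoids the boundary phenomena: set
\[
C \eqdot \Int(\futai{A}) \cup A.
\]
Since $A \subset \futai{A} \subset \clos{\futai{A}} = \clos{B}$ and $\Int(\futai{A}) = B \subset \clos{B}$, we have $B \subset C \subset \clos{B}$, so in particular $\Int(B) \subset C \subset \clos{B}$. Proposition~\ref{pro:gammaset}~(iv) then implies that $C$ is a $\gamma$-set, and of course $A \subset C$. By the very definition of the chronological future we conclude $\futai{A} \subset C$, hence $\futai{A} \setminus A \subset \Int(\futai{A})$, which is what was required.

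There is no real obstacle here; the whole argument reduces to recognizing that Proposition~\ref{pro:gammaset}~(iv) is exactly the tool that lets one add the closed set $A$ to the open $\gamma$-set $\Int(\futai{A})$ without leaving the class of $\gamma$-sets, and then invoking the minimality built into the definition of $\futai{A}$.
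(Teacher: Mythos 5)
Your proof is correct and follows essentially the same route as the paper: both arguments hinge on showing that $C \eqdot \Int(\futai{A}) \cup A$ is a $\gamma$-set via Proposition~\ref{pro:gammaset}~(iv), then concluding by minimality of $\futai{A}$ that $C = \futai{A}$. The only (minor) difference is that you apply~(iv) with the reference $\gamma$-set $B = \Int(\futai{A})$, which forces you to invoke~(iii) to get $A \subset \clos{B}$, whereas the paper applies~(iv) directly with the reference $\gamma$-set $\futai{A}$, using the immediate chain $\Int(\futai{A}) \subset C \subset \futai{A}$ — a one-step shortcut, but the idea is identical.
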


\begin{proof}
One has $\Int(\futai{A}) \subset \Int(\futai{A}) \cup A \subset \futai{A}$.
Applying Proposition~\ref{pro:gammaset}~(iv), we get that $\Int(\futai{A}) \cup A$ is a $\gamma$-set.
Since it contains $A$, it contains $\futai{A}$.
Therefore $\Int(\futai{A}) \cup A=\futai{A}$ and $\futai{A} \setminus A = \Int(\futai{A}) \setminus A$ is open.
\end{proof}

\begin{lemma}\label{le:local2}
Let $(M,\gamma)$ be a causal manifold and consider a constant cone $U \times \theta$ contained in $\gamma$.
Then, for $y, z \in U$ with $z-y \in \theta$, we have $z\in\futi{y}$.
\end{lemma}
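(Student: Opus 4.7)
The plan is to unwind Definition~\ref{def:chronofut} and show directly that $z$ belongs to every $\gamma$-set that contains $y$, by transferring the question to a constant-cone vector-space setting and invoking Proposition~\ref{pro:lgtop}. So fix an arbitrary $\gamma$-set $A$ with $y \in A$; the goal becomes proving $z \in A$.

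My first step would be a reduction to the case where, in the chart $\phi$, $U$ is a convex open subset of $\R^d$ and the straight-line segment $[y,z]$ lies in $U$. In general, I cover the compact segment $[y,z]$ by finitely many open convex balls $V_{\alpha_1},\dots,V_{\alpha_n}\subset U$, each still satisfying $V_{\alpha_j}\times\theta\subset\gamma$, choose a partition $y=x_0,x_1,\dots,x_n=z$ with $x_i-x_{i-1}\in\theta$ and each subsegment $[x_{i-1},x_i]$ contained in some $V_{\alpha_j}$, apply the convex case to each pair $(x_{i-1},x_i)$, and chain the conclusions via the transitivity of $\preceql$ recorded in Lemma~\ref{le:gDel2}.

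Once $U$ is convex, the heart of the argument is short. The trace $A\cap U$ is a $\gamma_U$-set on $(U,\gamma_U)$ with $\gamma_U=U\times_M\gamma$. Since $U\times\theta\subset\gamma_U$, Proposition~\ref{pro:gammaset1}(iv) promotes $A\cap U$ to a $(U\times\theta)$-set. Then Proposition~\ref{pro:lgtop}, applied with $(\Omega,\gamma_0)=(U,\theta)$, rewrites this as the clean inclusion
\begin{equation*}
U\cap\bl(A\cap U)+\theta\br\subset A\cap U.
\end{equation*}
Because $y\in A\cap U$, $z-y\in\theta$ and $z\in U$, the point $z=y+(z-y)$ lies in the left-hand side, hence in $A$. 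Since $A$ was arbitrary, this yields $z\in\futi{y}$.

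The only real subtlety I foresee is the preliminary reduction to a convex chart; the rest is a one-line application of already established propositions. In the typical use of the lemma, $U$ is chosen from the outset to be an open ball so that the reduction is invisible; in full generality it is an elementary compactness/covering argument, made transparent by the transitivity of the chronological preorder which lets the short constant-cone steps chain together.
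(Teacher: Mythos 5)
Your core step in the convex case is essentially the paper's own argument, just phrased for an arbitrary $\gamma$-set $A\ni y$ rather than the smallest one $I^+_{U\times\theta}(y)$: promote $A\cap U$ to a $(U\times\theta)$-set via Proposition~\ref{pro:gammaset1}~(iv), then read $z\in A$ off Proposition~\ref{pro:lgtop}. You were also right to notice that Proposition~\ref{pro:lgtop} requires $\Omega$ convex — a hypothesis the paper's one-line proof tacitly assumes of $\phi(U)$.

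Where the proposal goes astray is the claim that the reduction to the convex case ``in full generality \dots{} is an elementary compactness/covering argument.'' Covering $[\phi(y),\phi(z)]$ by convex balls inside $\phi(U)$ already presupposes $[\phi(y),\phi(z)]\subset\phi(U)$, which does \emph{not} follow from $y,z\in U$ and $z-y\in\theta$ when $\phi(U)$ is not convex, and this cannot be repaired: the lemma as stated is false for non-convex chart images. Take $M=\R^2$, $\phi=\id$, $U=\{t_2<1\}\cup\{t_1>1\}$, $\theta=\{v_2>10|v_1|\}$, $\theta'=\{v_2>10^3|v_1|\}$, and $\gamma=(U\times\theta)\cup(M\times\theta')$. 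Then $U\times\theta$ is a constant cone contained in $\gamma$, $y=(0,0)$ and $z=(2,30)$ lie in $U$, and $z-y=(2,30)\in\theta$; yet any strictly causal path from $y$ enters the corner $\{t_1\le1,\,t_2\ge1\}$ with $t_1<1/10$, and the slope constraint $|dt_1/dt_2|<10^{-3}$ there keeps $t_1<1$ until $t_2\approx 900$, so the path can never reach $z$. Since the set of points reachable from $y$ by strictly causal paths is a $\gamma$-set (as in part (i) of the proof of Lemma~\ref{le:tbcle}, which only uses~\eqref{eq:strictNC3} on a basis of \emph{convex} constant cones), it follows that $z\notin\futi{y}$. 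The correct repair is to add the hypothesis that $\phi(U)$ be convex; this costs nothing, since every invocation of Lemma~\ref{le:local2} in the paper either chooses $U$ convex outright (Lemma~\ref{le:tbcle}) or only needs a basis of convex constant cones (Lemmas~\ref{le:tbcle0}, \ref{le:gDel5} and Theorem~\ref{th:gDel1}).
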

\begin{proof}
Set $\gamma_1=U\times\theta$.
Then $z\in I^+_{\gamma_1}(y)$ by Proposition~\ref{pro:lgtop} and $I^+_{\gamma_1}(y)\subset U\cap\futi{y}$ by Proposition~\ref{pro:gammaset1}~(iv) (applied with $M=U$).
\end{proof}

\subsection{Causal paths}

\begin{notation}
If a function $c \colon I \to M$ is left (resp.\ right) differentiable, we denote its left (resp.\ right) derivative by $c'_l$ (resp.\ $c'_r$).
\end{notation}

\begin{definition}
A path $c \colon I \to M$ is a piecewise smooth map.
A path $c$ is \define{causal} if $c'_l(t), c'_r(t) \in (\ol\gamma)_{c(t)}$ for any $t \in I$ and it is {\em strictly causal} if $c'_l(t), c'_r(t) \in\gamma_{c(t)}$ for any $t \in I$.
\end{definition}

Hence, a smooth path $c$ is causal (resp.\ strictly causal) if and only if it defines a morphism (resp.\ strict morphism) of causal manifolds $c\cl (I,+)\to(M,\gamma)$.

Note that if $c_1$ and $c_2$ are two causal (resp.\ strictly causal) paths on $I$ with $c_1(1)=c_2(0)$, the concatenation $c=c_1\cup c_2$ (defined by glueing the two paths as usual) is causal (resp.\ strictly causal).

\begin{lemma}\label{le:morcausalpath}
Let $f\cl (M,\gamma_M)\to(N,\gamma_N)$ be a morphism \lp resp.\ a strict morphism\rp\, of causal manifolds and let $c\cl I\to M$ be a causal path \lp resp.\ a strictly causal path\rp.
Then $f\circ c\cl I\to N$ is a causal path \lp resp.\ a strictly causal path\rp.
\end{lemma}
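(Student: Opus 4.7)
The plan is to reduce the lemma directly to the chain rule, since the definitions of ``causal path'' and ``morphism of causal manifolds'' are designed precisely so that the composition works tangentially. The only technical wrinkle is that $c$ is only piecewise smooth and has one-sided derivatives at finitely many points; I would handle this by working with left and right derivatives separately.

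First I would note that $f\circ c\cl I\to N$ is piecewise smooth, because $c$ is piecewise smooth and $f\in C^\infty$, so the composition is smooth on each piece where $c$ is, and continuous on $I$. Next, at each $t\in I$, the chain rule for one-sided derivatives gives
\eq\label{eq:chainrule-pr}
&&(f\circ c)'_l(t)=Tf\bl c'_l(t)\br,\qquad (f\circ c)'_r(t)=Tf\bl c'_r(t)\br,
\eneq
where both $c'_l(t)$ and $c'_r(t)$ live in $T_{c(t)}M$ and the images live in $T_{f(c(t))}N$.

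Now I would apply the defining hypothesis on $f$. If $f$ is a morphism of causal manifolds, then $Tf(\ol{\gamma_M})\subset\ol{\gamma_N}$; since $c$ is causal we have $c'_l(t),c'_r(t)\in(\ol{\gamma_M})_{c(t)}$, hence by~\eqref{eq:chainrule-pr} the one-sided derivatives of $f\circ c$ at $t$ lie in $(\ol{\gamma_N})_{f(c(t))}$, showing that $f\circ c$ is a causal path. If moreover $f$ is strict and $c$ is strictly causal, the same argument with the inclusion $Tf(\gamma_M)\subset\gamma_N$ gives the strict conclusion.

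I do not expect any real obstacle: the lemma is essentially a bookkeeping statement, and the only point worth being careful about is that the one-sided derivatives transform via $Tf$ (which follows from smoothness of $f$ and piecewise smoothness of $c$), so that the pointwise cone conditions on $c'_l,c'_r$ translate pointwise to the corresponding cone conditions on $(f\circ c)'_l,(f\circ c)'_r$.
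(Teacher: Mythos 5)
Your proof is correct and follows exactly the route the paper takes: the paper's proof simply says the lemma is a direct consequence of the chain rule, and your argument spells out the (entirely routine) details of applying the chain rule to one-sided derivatives and tracking the cone inclusions.
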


\begin{proof}
This is a direct consequence of the chain rule.
\end{proof}

\begin{lemma}\label{le:tbcle0}
Let $c\cl I\to M$ be a strictly causal path.
Then for $t_1\leq t_2$ with $ t_1,t_2\in I$ we have $c(t_1)\preceql c(t_2)$.
\end{lemma}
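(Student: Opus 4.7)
The plan is to cover the compact interval $[t_1,t_2]$ by finitely many subintervals on each of which $c$ stays inside some constant cone contained in $\gamma$, to deduce from Lemma~\ref{le:local2} a relation $\preceql$ between the endpoints of each subinterval, and to chain these via the transitivity of $\preceql$ (Lemma~\ref{le:gDel2}). Since $c$ is only piecewise smooth, I first include the finitely many non-smooth points into the partition and reduce to the case where $c$ is smooth on each piece.

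Fix $t \in [t_1, t_2]$. Since $c'(t) \in \gamma_{c(t)}$, Lemma~\ref{le:local} supplies a constant cone $(\phi, U_t, \theta_t)$ contained in $\gamma$ with $c(t) \in U_t$ and $(T\phi)(c'(t)) \in \theta_t$. By continuity of $c$ and $c'$, there is an open interval $J_t \ni t$ such that $c(J_t) \subset U_t$ and $(\phi \circ c)'(s) \in \theta_t$ for all $s \in J_t$. For $s_1 < s_2$ in $J_t$, if I can check that $\phi(c(s_2)) - \phi(c(s_1)) \in \theta_t$, then Lemma~\ref{le:local2} applied in the chart yields $c(s_1) \preceql c(s_2)$. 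A finite subcover of $[t_1, t_2]$ by such $J_t$'s then produces a partition $t_1 = s_0 < s_1 < \cdots < s_n = t_2$ with $c(s_i) \preceql c(s_{i+1})$ for each $i$, and transitivity gives $c(t_1) \preceql c(t_2)$.

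The main obstacle is thus to verify $\phi(c(s_2)) - \phi(c(s_1)) \in \theta_t$, i.e.\ to show that the chord itself, not merely the tangent vectors along the way, lies in the open cone $\theta_t$. I would express the chord as the integral $\int_{s_1}^{s_2} (\phi \circ c)'(u)\,du$. The image $(\phi \circ c)'([s_1, s_2])$ is a compact subset of the open convex set $\theta_t$, so its convex hull $K$ is a compact subset of $\theta_t$ (the convex hull of a compact set in a finite-dimensional vector space is compact). The mean $\frac{1}{s_2 - s_1}\int_{s_1}^{s_2} (\phi \circ c)'(u)\,du$ is a limit of Riemann sums, each a convex combination of points of $(\phi \circ c)'([s_1, s_2])$, and therefore lies in $K \subset \theta_t$. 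Since $\theta_t$ is a cone, multiplying by $s_2 - s_1 > 0$ places the integral itself in $\theta_t$, as required.
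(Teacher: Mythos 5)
Your proof is correct, but it takes a genuinely different route from the paper's. The paper reduces the claim to a purely local statement: for each $t_0$, there is $\alpha>0$ with $c(t_0)\preceql c(t)$ for $t\in(t_0,t_0+\alpha)$ and $c(t)\preceql c(t_0)$ for $t\in(t_0-\alpha,t_0)$; this follows immediately from one-sided differentiability, since $(c(t)-c(t_0))/(t-t_0)$ tends to $c'_r(t_0)\in\theta$ (resp.\ $c'_l(t_0)$) and $\theta$ is open, so the chord itself lies in $\theta$ for $t$ close enough to $t_0$. The globalization from this local statement to the whole interval $[t_1,t_2]$ is then an implicit connectedness/infimum argument. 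You instead globalize by compactness: after partitioning at the non-smooth points, you cover each smooth piece by intervals on which $(\phi\circ c)'$ stays in a fixed constant cone, take a Lebesgue-number refinement, and chain via transitivity. Where you pay for this is in the proof that the chord lies in the cone: you need the integral-of-the-derivative representation plus the fact that the convex hull of a compact subset of the open convex cone $\theta_t$ is a compact subset of $\theta_t$, a nontrivial (though standard) convexity fact that the paper's local argument bypasses entirely, since near a single point $t_0$ the chord direction is just a small perturbation of the tangent. Your approach is thus a bit heavier at the local step, but avoids any explicit appeal to a connectedness/supremum argument; both are complete proofs. One small point worth being careful about: at the endpoints of each smooth piece, $c'$ means a one-sided derivative, so the open interval $J_t$ should be intersected with the closed piece before applying the chord argument — this does not affect the conclusion.
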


\begin{proof}
It is enough to prove that for any $t_0\in I$, there exists $\alpha>0$ such that $c(t_0)\preceql c(t)$ for $t\in(t_0,t_0+\alpha)$ and similarly $c(t)\preceql c(t_0)$ for $t\in(t_0-\alpha,t_0)$.
We may assume $t_0=0$.
There exists a constant cone $U\times\theta$ contained in $\gamma$ and containing $(c(0),c'_r(0)$.
There exists $\alpha>0$ such that $c(t)-c(0)\in\theta$ for $t\in(0,\alpha)$.
By Lemma~\ref{le:local2}, this implies $c(0)\preceql c(t)$ for $t\in(0,\alpha)$.
The other case is similar, using $c'_l(0)$.
\end{proof}

\begin{lemma}\label{le:tbcle}
Let $A\subset M$.
One has $y\in\futai{A}$ if and only if $y\in A$ or there exists a strictly causal path $c\cl I\to M$ such that $c(0)\in A$, $c(1)=y$.
\end{lemma}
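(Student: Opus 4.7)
The reverse implication is immediate from Lemma~\ref{le:tbcle0}: if $c$ is a strictly causal path with $c(0)\in A$ and $c(1)=y$, then $c(0)\preceql y$, hence $y\in\futai{c(0)}\subset\futai{A}$.

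For the forward implication, I would introduce
\[
B = A \cup \{y\in M \mid \text{there is a strictly causal path } c \text{ with } c(0)\in A \text{ and } c(1)=y\},
\]
and reduce the statement to showing that $B$ is a $\gamma$-set: indeed $A\subset B$, and $\futai{A}$ is by definition the smallest $\gamma$-set containing $A$. The natural tool is the criterion~\eqref{eq:strictNC3}. To apply it, I would first choose a basis of constant cones $(\phi,U,\theta)$ contained in $\gamma$ with the additional feature that each $\phi(U)$ is convex in $\R^d$; such a basis exists because one can start from any basis produced by Lemma~\ref{le:local} and refine each $U$ to coordinate balls.

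Given such a basis, fix $x\in U\cap B$ and $v\in\theta$ with $x+v\in U$. In the chart the straight segment $c_v(t)=x+tv$ stays in $U$ by convexity and has constant derivative $v\in\theta$, so $(c_v(t),c_v'(t))\in U\times\theta\subset\gamma$; extending slightly beyond $[0,1]$ by openness of $U$ produces a strictly causal path from $x$ to $x+v$. If $x\in A$, this path directly witnesses $x+v\in B$. Otherwise, pick a strictly causal path $c_1$ from some $a\in A$ to $x$ and concatenate with $c_v$: the result is piecewise smooth, its left and right derivatives at the junction lie in $\gamma$, so it is strictly causal from $a$ to $x+v$, whence $x+v\in B$. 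This yields $U\cap(U\cap B+\theta)\subset B$ and concludes that $B$ is a $\gamma$-set. The one delicate step is the preliminary refinement to a basis of constant cones with convex chart images, since Definition~\ref{def:ctcone} does not require convexity of $U$; without it the linear segment $c_v$ could exit $U$ and the construction of the witnessing path would fail.
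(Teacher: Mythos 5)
Your proposal is correct and takes essentially the same route as the paper's proof: both define $B$ as $A$ together with all endpoints of strictly causal paths starting in $A$, show the easy inclusion $B\subset\futai{A}$ via Lemma~\ref{le:tbcle0}, and prove $B\supset\futai{A}$ by verifying that $B$ is a $\gamma$-set using the criterion~\eqref{eq:strictNC3} with a basis of constant cones having convex chart domains. Your remark that one must first refine to constant cones with convex $U$ makes explicit a small point the paper's proof treats as tacit when it writes ``choose a constant cone $U\times\theta$ contained in $\gamma$ with $U$ convex.''
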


\begin{proof}
Let $B$ be the union of $A$ with the set of points that can be reached from $A$ by a strictly causal path.
We shall prove that $\futai{A}=B$.

\spa
(i)
To prove that $B \supset \futai{A}$, it is enough to check that $B$ is a $\gamma$-set.
Choose a constant cone $U \times \theta$ contained in $\gamma$ with $U$ convex.
By~\eqref{eq:strictNC3}, it is enough to prove that $U\cap(B\cap U+\theta)\subset B$.
Let $y' \in B \cap U$ and $v' \in \theta$ with $y'+v' \in U$.
Since $U$ is convex, $c \colon I \to U, t \mapsto y'+tv'$ is a strictly causal path for $I$ a small enough neighborhood of $[0,1]$.
Since $y' \in B$, there exists a strictly causal path $\tilde{c}$ with $\tilde{c}(0)\in A$ and $\tilde{c}(1)=y'$.
Therefore, concatenating $\tilde{c}$ and $c$ proves that $y'+v' \in B$.

\spa
(ii)
Let us prove that $B \subset \futai{A}$.
Let $y\in B, y\notin A$.
There exist $x\in A$ and a strictly causal curve $c$ going from $x$ to $y$.
Then $y\in \futai{x}$ by Lemma~\ref{le:tbcle0}.
Hence, $y\in\futai{A}$.
\end{proof}

\begin{remark}
Using Lemma~\ref{le:tbcle}, we obtain an alternate proof that the family of $\gamma$-sets is closed under unions and intersections.
Indeed, the proof of Lemma~\ref{le:tbcle} says that for any $A\subset M$, there is a smallest $\gamma$-set containing $A$, and it is the union of $A$ and the set of points that can be reached from $A$ by a strictly causal path.
Taking this as the definition of $\futai{A}$, for any set $A \subset M$, one has $\futai{A} = A$ if and only if $A$ is a $\gamma$-set.
Now, let $(A_i)_{i \in I}$ be a family of $\gamma$-sets and let $A = \bigcap_i A_i$.
Then $A = \bigcap_i \futai{A_i} = \bigcap_i \bigcup_{x \in A_i} \futai{x} = \bigcup_{x \in \bigcap_i A_i} \futai{x} = \bigcup_{x \in \bigcap_i A_i} \futai{x} = \futai{A}$, so $A$ is a $\gamma$-set.
The proof for unions is similar.
\end{remark}

\begin{example}
Let $M=\R^2$ with coordinates $(x_1,x_2)$ and let $(x_1,x_2;v_1,v_2)$ denote the coordinates on $TM$.
Consider the cones
\eqn
&&\theta_1=\{(v_1,v_2); v_2>|v_1|\},\quad \theta_2=\{(v_1,v_2); v_2>\frac{1}{2}|v_1|\}.
\eneqn
Let
\eqn
&&Z=\{(x_1,x_2);x_2=|x_1|\},\\
&&\gamma=(M\setminus Z)\times\theta_2\cup Z\times\theta_1.
\eneqn
Note that $\Int(\ol\gamma)=M\times\theta_2$.
One has $\futai{0}=\{0\}\cup \{(x_1,x_2);x_2>|x_1|\}$ and this set is strictly contained in $I^+_{\Int(\ol\gamma)}(0)=\{0\}\cup \{(x_1,x_2);x_2>\frac{1}{2}|x_1|\}$.
In other words, $\gamma$ and $\Int(\ol\gamma)$ define different chronological preorders.
\end{example}

\subsection{Causal preorders}

\begin{definition}\label{def:causalpo}
A preorder $\preceq$ is \define{causal} if $\gPo$ is a $(\gamma^a\times\gamma)$-set.
\end{definition}

\begin{lemma}\label{le:gDel5}
Let $\preceq$ be a preorder on $M$.
Assume that for any $x\in M$, $\futo{x}$ is a $\gamma$-set.
Then, for any $y\in M$, $\paso{y}$ is a $\gamma^a$-set.
\end{lemma}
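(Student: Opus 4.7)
My approach is to show the equivalent assertion (via Proposition~\ref{pro:gammaset1}~(ii)) that $B \eqdot M \setminus \paso{y}$ is a $\gamma$-set, and to verify this directly from the local constant-cone criterion~\eqref{eq:strictNC3}. The hypothesis that $\futo{x}$ is a $\gamma$-set for every $x \in M$, combined with the minimality of the chronological future $\futai{x}$ among $\gamma$-sets containing $x$ (Definition~\ref{def:chronofut}), yields the pointwise inclusion $\futai{x} \subset \futo{x}$; equivalently, $x \preceql x'$ implies $x \preceq x'$. This is the only nontrivial consequence of the hypothesis that I will need.

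Next, I fix a basis of constant cones $U \times \theta$ contained in $\gamma$, which exists by Lemma~\ref{le:local}. To show $B$ is a $\gamma$-set, by~\eqref{eq:strictNC3} it suffices to verify that $U \cap (U \cap B + \theta) \subset B$ for each such constant cone. So let $z \in U \cap B$ and $v \in \theta$ with $z' \eqdot z + v \in U$, and argue by contradiction, assuming $z' \notin B$, i.e., $z' \preceq y$. Since $z, z' \in U$ and $z' - z = v \in \theta$, Lemma~\ref{le:local2} gives $z' \in \futai{z}$, so $z \preceql z'$; by the first step $z \preceq z'$, and transitivity of $\preceq$ then forces $z \preceq y$, contradicting $z \in B$. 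Hence $z' \in B$, completing the verification.

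There is no real obstacle in this argument; the only mild point to keep track of is the passage between the $\gamma^a$-set condition on $\paso{y}$ and the $\gamma$-set condition on its complement, handled by Proposition~\ref{pro:gammaset1}~(ii). One could equivalently work directly with $\paso{y}$ using the antipodal basis of constant cones $U \times \theta^a \subset \gamma^a$: given $w \in U \cap \paso{y}$ and $v \in \theta^a$ with $w + v \in U$, set $x \eqdot w + v$; then $w - x = -v \in \theta$, so Lemma~\ref{le:local2} together with the hypothesis yields $x \preceql w$, hence $x \preceq w \preceq y$, placing $x \in \paso{y}$ as required.
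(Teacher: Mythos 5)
Your proof is correct and takes essentially the same route as the paper: derive $\futai{x}\subset\futo{x}$ (i.e., $\preceql$ refines $\preceq$) from the minimality of the chronological future, then verify the constant-cone criterion via Lemma~\ref{le:local2} and transitivity. The paper argues directly on $\paso{y}$ exactly as in your final remark, rather than passing through the complement, but this is only a cosmetic difference.
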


\begin{proof}
For any $x$, $\futo{x}$ is a $\gamma$-set containing $x$.
Therefore, $\futai{x}\subset\futo{x}$ and $x\preceql y$ implies $x\preceq y$.
Let $y\in M$ and let $(x,v) \in \gamma$.
By Lemma~\ref{le:local}, there is a constant cone $U \times \theta \subset \gamma$ containing $(x,v)$.
We shall prove that $U\cap(U\cap\paso{y}-\theta)\subset\paso{y}$.
Let $x' \in U$ and $v' \in \theta$ be such that $x'-v' \in U$ and $x'\preceq y$.
We have $x'-(x'-v')=v'\in\theta$ so $x'-v' \preceql x'$ by Lemma~\ref{le:local2}.
Hence, $x'-v' \preceq x'$.
By hypothesis, $x' \preceq y$ so by transitivity, $x'-v' \preceq y$, that is, $x'-v' \in\paso{y}$.
Therefore, $(x,-v) \in N(\paso{y})$.
\end{proof}

\begin{lemma}\label{le:gDel1}
Let $\preceq$ be a preorder on $M$.
Assume that for any $x\in M$, $\futo{x}$ is a $\gamma$-set.
Then the preorder $\preceq$ is causal.
\end{lemma}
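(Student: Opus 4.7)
The plan is to express the graph $\gPo$ as a union of products of already-known $\gamma$-sets and $\gamma^a$-sets, and then invoke the closure properties of $\gamma$-sets established in Proposition~\ref{pro:gammaset1} and Proposition~\ref{pro:gammaset}.

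First I would combine the hypothesis with Lemma~\ref{le:gDel5} to obtain, for every $x\in M$, that $\paso{x}$ is a $\gamma^a$-set while $\futo{x}$ is (by hypothesis) a $\gamma$-set. By Proposition~\ref{pro:gammaset1}~(iii), this gives that
\[
\paso{x}\times\futo{x}\subset M\times M\quad\text{is a }(\gamma^a\times\gamma)\text{-set}
\]
for each $x\in M$.

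The key observation is then the set-theoretic identity
\[
\gPo=\bigcup_{x\in M}\paso{x}\times\futo{x}.
\]
The inclusion $\supset$ follows from transitivity: if $y\preceq x$ and $x\preceq z$, then $y\preceq z$, so $(y,z)\in\gPo$. For the inclusion $\subset$, given $(y,z)\in\gPo$ it suffices to take $x=y$ (using reflexivity $y\preceq y$), which gives $y\in\paso{y}$ and $z\in\futo{y}$.

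Once this identity is in place, stability of $\gamma$-sets under arbitrary unions (Proposition~\ref{pro:gammaset}~(i), applied in the causal manifold $(M\times M,\gamma^a\times\gamma)$) immediately implies that $\gPo$ is a $(\gamma^a\times\gamma)$-set, which is exactly the condition in Definition~\ref{def:causalpo} for $\preceq$ to be causal. The only nontrivial step conceptually is spotting the decomposition of $\gPo$; everything else is a direct citation of earlier results, so there is no real analytic obstacle.
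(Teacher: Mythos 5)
Your proof is correct and follows exactly the same route as the paper's: combine Lemma~\ref{le:gDel5} with the hypothesis and Proposition~\ref{pro:gammaset1}~(iii) to see that each $\paso{x}\times\futo{x}$ is a $(\gamma^a\times\gamma)$-set, then write $\gPo=\bigcup_{x\in M}\paso{x}\times\futo{x}$ and apply Proposition~\ref{pro:gammaset}~(i). You also spell out the proof of the set-theoretic identity, which the paper leaves implicit.
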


\begin{proof}
By the hypothesis, Lemma~\ref{le:gDel5} and Proposition~\ref{pro:gammaset1}~(iii), $\paso{x}\times\futo{x}$ is a $(\gamma^a\times\gamma)$-set.
Then the result follows from Proposition~\ref{pro:gammaset}~(i) and the equality
\eqn
&&\gPo=\bigcup_{x\in M}\paso{x}\times\futo{x}.
\eneqn
\end{proof}

\begin{lemma}\label{le:gDellambda}
One has $\gDel = I^+_{\gamma^a \times \gamma}(\Delta)$.
\end{lemma}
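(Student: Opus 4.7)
The plan is to prove both inclusions separately.

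For the inclusion $I^+_{\gamma^a \times \gamma}(\Delta) \subset \gDel$, I would argue that $\gDel$ itself is a $(\gamma^a \times \gamma)$-set containing $\Delta$; since $I^+_{\gamma^a \times \gamma}(\Delta)$ is by definition the smallest such set (Definition~\ref{def:chronofut}), the inclusion follows. The containment $\Delta \subset \gDel$ is just reflexivity of the preorder $\preceql$. For the fact that $\gDel$ is a $(\gamma^a \times \gamma)$-set, I apply Lemma~\ref{le:gDel1} to the preorder $\preceql$: its future set at $x$ is precisely $\futai{x}$, which is a $\gamma$-set by the very definition of the chronological future.

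For the reverse inclusion $\gDel \subset I^+_{\gamma^a \times \gamma}(\Delta)$, let $(x,y) \in \gDel$. If $x=y$ then $(x,y) \in \Delta \subset I^+_{\gamma^a \times \gamma}(\Delta)$, so I may assume $x \neq y$. By Lemma~\ref{le:tbcle}, there is a strictly $\gamma$-causal path $c \cl I \to M$ with $c(0)=x$ and $c(1)=y$. From $c$ I construct a strictly $(\gamma^a \times \gamma)$-causal path $d$ in $M \times M$ joining a point of $\Delta$ to $(x,y)$ by setting
\[
d(t) = \bl c((1-t)/2),\ c((1+t)/2) \br,
\]
defined on an open interval $J \supset [0,1]$ chosen so that both $(1-t)/2$ and $(1+t)/2$ lie in $I$ for $t \in J$. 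Then $d(0) = (c(1/2), c(1/2)) \in \Delta$ and $d(1) = (c(0), c(1)) = (x,y)$, and a chain-rule computation gives the one-sided derivatives
\[
d'_l(t) = \bl -\tfrac{1}{2} c'_r((1-t)/2),\ \tfrac{1}{2} c'_l((1+t)/2) \br, \qquad d'_r(t) = \bl -\tfrac{1}{2} c'_l((1-t)/2),\ \tfrac{1}{2} c'_r((1+t)/2) \br,
\]
which lie in $\gamma^a \times \gamma$ since the one-sided derivatives of $c$ lie in $\gamma$. Hence $d$ is strictly $(\gamma^a \times \gamma)$-causal, and applying Lemma~\ref{le:tbcle} in the product causal manifold $(M \times M, \gamma^a \times \gamma)$ gives $(x,y) \in I^+_{\gamma^a \times \gamma}(\Delta)$.

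The step I expect to be the main obstacle is the construction of the path $d$: in the product $M \times M$, one needs a piecewise smooth path that starts on the diagonal, whose first-factor derivative lies in $\gamma^a$ and whose second-factor derivative lies in $\gamma$, and which ends at the prescribed point $(x,y)$. The symmetric reparametrization around $t = 1/2$ above achieves precisely this, by tracing $c$ backwards in the first coordinate and forwards in the second, thereby matching the structure of the product cone $\gamma^a \times \gamma$. Once this path is in hand, the remaining bookkeeping about one-sided derivatives and about the domain of $d$ is routine.
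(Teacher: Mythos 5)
Your proof is correct and follows essentially the same route as the paper: for $\supset$ you invoke Lemma~\ref{le:gDel1} applied to $\preceql$ together with minimality of the chronological future, and for $\subset$ you build a strictly $(\gamma^a\times\gamma)$-causal path in $M\times M$ by running $c$ backwards in the first factor and forwards in the second; your $d(t)$ is just the affine reparametrization to $[0,1]$ of the paper's $\tw{c}(t)=(c(1-t),c(t))$ restricted to $[1/2,1]$. You are slightly more explicit than the paper in handling the diagonal case $x=y$ and in writing out the one-sided derivatives, but the argument is the same.
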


\begin{proof}
(i)
Lemma~\ref{le:gDel1} implies that $\gDel$ is a $(\gamma^a \times \gamma)$-set which proves the inclusion ``$\supset$''.

\spa
(ii)
For the reverse inclusion, let $(x,y) \in \gDel$.
By Lemma~\ref{le:tbcle}, there exists a strictly causal path $c \colon I \to M$ with $c(0)=x$ and $c(1)=y$.
The path $\tw{c} = (c_1,c_2) \colon I \to M \times M$ defined by $c_1(t) = c(1-t)$ and $c_2(t)=c(t)$ is a strictly causal path (for the causal structure on $M\times M$ given by $\gamma^a\times\gamma$) with $\tw{c}(1/2)=(c(1/2),c(1/2)) \in \Delta$ and $\tw{c}(1) = (x,y)$.
Therefore, again by Lemma~\ref{le:tbcle}, $(x,y) \in I^+_{\gamma^a \times \gamma}(\Delta)$.
\end{proof}

\begin{remark}
Since $\Delta$ is closed, Lemmas~\ref{pro:Iopen} and~\ref{le:gDellambda} imply that $\gDel\setminus\Delta$ is open.
\end{remark}

\begin{theorem}\label{th:gDel1}
Let $(M,\gamma)$ be a causal manifold and $\preceq$ be a preorder on $M$.
Then the following assertions are equivalent:
\bnum
\item
The preorder $\preceq$ is causal.
\item
For any $x\in M$, $\futo{x}$ is a $\gamma$-set.
\item
For any $y\in M$, $\paso{y}$ is a $\gamma^a$-set.
\item
For any $x\in M$, $\futi{x} \subset \futo{x}$.
\item
One has $\gDel \subset \gPo$.
\enum
\end{theorem}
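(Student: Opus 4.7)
The plan is to establish the cycle (ii) $\Rightarrow$ (i) $\Rightarrow$ (v) $\Leftrightarrow$ (iv) $\Rightarrow$ (ii), and to obtain (ii) $\Leftrightarrow$ (iii) by a symmetric argument using the opposite preorder. Most of these implications follow quickly from lemmas already established; the key conceptual step is (i) $\Rightarrow$ (v), which uses the fact from Lemma~\ref{le:gDellambda} that $\gDel$ is the \emph{smallest} $(\gamma^a\times\gamma)$-set containing the diagonal.

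For (ii) $\Leftrightarrow$ (iii), Lemma~\ref{le:gDel5} gives (ii) $\Rightarrow$ (iii). The reverse implication is obtained by applying Lemma~\ref{le:gDel5} to the opposite preorder $\preceq^a$ on the causal manifold $(M,\gamma^a)$, noting that $J^+_{\preceq^a}(x) = \paso x$ and $J^-_{\preceq^a}(y) = \futo y$. The implication (ii) $\Rightarrow$ (i) is exactly Lemma~\ref{le:gDel1}.

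For (i) $\Rightarrow$ (v): assuming $\preceq$ is causal, $\gPo$ is a $(\gamma^a\times\gamma)$-set, and since $\preceq$ is a preorder it contains $\Delta$. By Lemma~\ref{le:gDellambda}, $\gDel = I^+_{\gamma^a\times\gamma}(\Delta)$, which is by definition the smallest $(\gamma^a\times\gamma)$-set containing $\Delta$, so $\gDel \subset \gPo$. The equivalence (iv) $\Leftrightarrow$ (v) is immediate from the definitions of the graphs $\gDel$ and $\gPo$.

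The remaining implication (iv) $\Rightarrow$ (ii) is the observation that, by transitivity of $\preceq$ and the assumption $\futai y \subset \futo y$ for all $y$, we have
\[
\futo x = \bigcup_{y\in \futo x} \futai y,
\]
since each $y\in \futo x$ lies in $\futai y$ (reflexivity of $\preceql$) and $\futai y \subset \futo y \subset \futo x$. Each $\futai y$ is a $\gamma$-set (the smallest one containing $y$), and arbitrary unions of $\gamma$-sets are $\gamma$-sets by Proposition~\ref{pro:gammaset}~(i), so $\futo x$ is a $\gamma$-set. I expect no substantial obstacle here: the main content is packaged into Lemmas~\ref{le:gDel1}, \ref{le:gDel5}, and \ref{le:gDellambda}, and the theorem is essentially a bookkeeping exercise tying them together.
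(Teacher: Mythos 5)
Your proof is correct, and its overall architecture matches the paper's: the cycle $(ii)\Rightarrow(i)\Rightarrow(v)\Leftrightarrow(iv)\Rightarrow(ii)$ together with $(ii)\Leftrightarrow(iii)$, with Lemmas~\ref{le:gDel1}, \ref{le:gDel5}, \ref{le:gDellambda} doing the heavy lifting. The one place where you diverge is the implication $(iv)\Rightarrow(ii)$. The paper's proof here is a direct local argument: it applies~\eqref{eq:strictNC3}, takes a constant cone $U\times\theta\subset\gamma$, and for $y'\in U\cap\futo{x}$ and $v'\in\theta$ with $y'+v'\in U$, deduces $y'\preceql y'+v'$ from Lemma~\ref{le:local2}, hence $y'\preceq y'+v'$ (by $(v)$), hence $y'+v'\in\futo{x}$ by transitivity. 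Your argument instead writes $\futo{x}=\bigcup_{y\in\futo{x}}\futai{y}$ --- the inclusion $\subset$ from reflexivity of $\preceql$, the inclusion $\supset$ from $(iv)$ and transitivity of $\preceq$ --- and invokes Proposition~\ref{pro:gammaset}~(i) on closure of $\gamma$-sets under arbitrary unions. Both are valid; yours is shorter and more conceptual but leans on the (nontrivial) stability of $\gamma$-sets under unions, whereas the paper's argument is self-contained at the level of charts and does not rely on that stability result. Your explicit treatment of $(iii)\Rightarrow(ii)$ by passing to $(M,\gamma^a,\preceq^a)$ is also correct and fills in a step the paper leaves implicit when it just cites Lemma~\ref{le:gDel5} for the equivalence.
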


\begin{proof}
(ii)$\Rightarrow$(i).
It is proved in Lemma~\ref{le:gDel1}.

\spa
(ii)$\Leftrightarrow$(iii).
By Lemma~\ref{le:gDel5}.

\spa
(i)$\Rightarrow$(v).
By Lemma~\ref{le:gDellambda}, $\gDel$ is the smallest $(\gamma^a\times\gamma)$-set containing the diagonal.
By hypothesis, $\gPo$ is a $\gamma^a\times\gamma$-set and contains the diagonal.
The result follows.

\spa
(iv)$\Leftrightarrow$(v).
Obvious.

\spa
(iv)$\Rightarrow$(ii).
We shall apply~\eqref{eq:strictNC3}.
Let $U \times \theta$ be a constant cone contained in $\gamma$ and let us prove that $U\cap(U\cap\futo{x}+\theta)\subset\futo{x}$.
Let $y' \in U\cap\futo{x}$ and let $v' \in \theta$ be such that $y'+v' \in U$.
By Lemma~\ref{le:local2}, $y' \preceql y'+v'$.
Since $\gDel\subset\gPo$, we obtain $y' \preceq y'+v'$.
Since $x \preceq y'$, we get $y'+v' \in \futo{x}$.
\end{proof}

Graphs of transitive relations, closed sets, and $\gamma$-sets in a causal manifold, are all closed under intersections.
This justifies Item~(a) of the following definition.

\begin{definition}\label{def:causalorder}
\banum
\item
One denotes by $\preceqccc$ the finest closed causal preorder and by $\gDelccc$ its graph, that is, $\gDelccc$ is the intersection of all graphs of closed causal preorders.
One calls it the canonical closed causal preorder, cc-preorder for short.
One denotes by $\futccc{A}$ and $\pasccc{A}$ the future and past sets of $A$ for the cc-preorder.
\item
One denotes by $\preceqps$ the preorder given by $x\preceqps y$ if there exists a causal path $c\cl I\to M$ with $c(0)=x$ and $c(1)=y$ and calls it the piecewise smooth preorder, ps-preorder for short.
One denotes by $\gDelps$ its graph and one denotes by $\futps{A}$ and $\pasps{A}$ the future and past sets of $A$ for the ps-preorder.
\item
If there is a risk of confusion, we denote by $\gDel^M$ the chronological preorder on $M$, and similarly for $\gDelps^M$ and $\gDelccc^M$.
\eanum
\end{definition}
The cc-preorder was introduced first for Lorentzian spacetimes in~\cite{SW96}.
One has
\eq\label{eq:gDelgDelccc}
&&\gDel \subset \gDelccc\mbox{ and }\gDel \subset \gDelps.
\eneq
The first inclusion is obvious by construction and the second one follows from Lemma~\ref{le:tbcle}.
Applying Theorem~\ref{th:gDel1}~(v)$\Rightarrow$(i), one gets:

\begin{corollary}
The preorder $\preceqps$ is causal.
\end{corollary}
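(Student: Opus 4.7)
The plan is to recognize that the statement follows essentially for free from Theorem~\ref{th:gDel1} combined with the inclusion $\gDel \subset \gDelps$ already noted in~\eqref{eq:gDelgDelccc}. No new geometric work is really needed; I would just assemble the pieces carefully.

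First I would verify that $\preceqps$ is genuinely a preorder. Reflexivity comes from the constant path $c(t)=x$ for all $t$: its left and right derivatives vanish, and $0\in(\ol{\gamma})_x$ since $\gamma_x\neq\varnothing$ is a nonempty open convex cone whose closure contains $0$, so the constant path is causal. Transitivity comes from concatenation: if $c_1,c_2$ are causal paths witnessing $x\preceqps y$ and $y\preceqps z$, their concatenation (after reparametrization to $I$) is piecewise smooth and its one-sided derivatives at every point still lie in $(\ol{\gamma})_{c(t)}$, hence it witnesses $x\preceqps z$.

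Next I would verify condition~(v) of Theorem~\ref{th:gDel1}, namely $\gDel\subset\gDelps$. This is the second inclusion in~\eqref{eq:gDelgDelccc} and is immediate from Lemma~\ref{le:tbcle}: if $(x,y)\in\gDel$, then either $y=x$ (and the constant path gives $(x,y)\in\gDelps$) or there is a strictly causal path from $x$ to $y$; since $\gamma\subset\ol{\gamma}$, a strictly causal path is in particular a causal path, so $(x,y)\in\gDelps$.

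With (v) in hand, the implication (v)$\Rightarrow$(i) of Theorem~\ref{th:gDel1} applied to the preorder $\preceqps$ yields that $\preceqps$ is causal, which is the desired conclusion. The only point worth a moment of care is the reflexivity check above, since ``causal path'' in this paper requires one-sided derivatives in $\ol{\gamma}$ rather than $\gamma$; that distinction is exactly what allows the constant path to qualify, so there is no real obstacle.
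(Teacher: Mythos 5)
Your proof is correct and follows essentially the same route as the paper: establish $\gDel\subset\gDelps$ via Lemma~\ref{le:tbcle}, then apply Theorem~\ref{th:gDel1}~(v)$\Rightarrow$(i). The paper takes the preorder properties of $\preceqps$ for granted (they are implicit in Definition~\ref{def:causalorder}), so your explicit reflexivity/transitivity check is extra care rather than a different argument.
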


\begin{proposition}\label{pro:increas}
Let $f \colon (M,\gamma_M) \to (N,\gamma_N)$ be a causal morphism.
\banum
\item
The function $f$ is increasing as a function from $(M,\preceqps)$ to $(N,\preceqps)$.
\item
If $f$ is strictly causal, then it is increasing as a function from $(M,\preceql)$ to $(N,\preceql)$.
\item
If either $\gDelps^N \subset \gDelccc^N$ or if $f$ is strictly causal, then $f$ is increasing as a function from $(M,\preceqccc)$ to $(N,\preceqccc)$.
\eanum
\end{proposition}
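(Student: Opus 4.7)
The plan is to handle the three items in turn, with (a) and (b) being direct applications of the ``lifting paths along $f$'' lemma, and (c) being an abstract argument about pulling back preorders.

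For (a), suppose $x \preceqps y$ in $M$. By definition there is a causal path $c \colon I \to M$ with $c(0)=x$ and $c(1)=y$. By Lemma~\ref{le:morcausalpath}, the composition $f\circ c \colon I \to N$ is again a causal path, with endpoints $f(x)$ and $f(y)$, so $f(x) \preceqps f(y)$. For (b), if $f$ is strictly causal and $x \preceql y$, then by Lemma~\ref{le:tbcle} either $x=y$ (in which case the conclusion is trivial) or there is a strictly causal path $c$ from $x$ to $y$; Lemma~\ref{le:morcausalpath} applied in its strict form gives that $f\circ c$ is a strictly causal path from $f(x)$ to $f(y)$, and a second application of Lemma~\ref{le:tbcle} gives $f(x) \preceql f(y)$.

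For (c), the main idea is to consider the pulled-back relation $\preceq'$ on $M$ defined by $x \preceq' y$ iff $f(x) \preceqccc f(y)$, whose graph is $(f\times f)^{-1}(\gDelccc^N)$. This is tautologically a preorder, and it is closed in $M\times M$ since $f\times f$ is continuous and $\gDelccc^N$ is closed. The key step is to verify that $\preceq'$ is causal; once this is done, the minimality of $\preceqccc^M$ among closed causal preorders yields $\gDelccc^M \subset$ graph$(\preceq')$, which is exactly the assertion that $f$ is increasing for $\preceqccc$.

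To show $\preceq'$ is causal, I would use the equivalence (i)$\Leftrightarrow$(v) of Theorem~\ref{th:gDel1}: it suffices to check $\gDel^M \subset (f\times f)^{-1}(\gDelccc^N)$, i.e.\ that $x \preceql^M y$ implies $f(x) \preceqccc^N f(y)$. If $f$ is strictly causal, then by (b) we get $f(x) \preceql^N f(y)$, and the first inclusion of~\eqref{eq:gDelgDelccc} gives $f(x) \preceqccc^N f(y)$. If instead $\gDelps^N \subset \gDelccc^N$, use the second inclusion of~\eqref{eq:gDelgDelccc} to promote $x \preceql^M y$ to $x \preceqps^M y$, apply (a) to get $f(x)\preceqps^N f(y)$, and finally invoke the hypothesis to conclude $f(x)\preceqccc^N f(y)$. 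The only place where a subtlety arises is this verification of causality; everything else is either formal (closedness, minimality) or a direct path-lifting argument.
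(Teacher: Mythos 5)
Your proof is correct and follows essentially the same route as the paper: (a) and (b) by composing the relevant (strictly) causal path with $f$ via Lemma~\ref{le:morcausalpath} (together with the characterizations in Lemmas~\ref{le:tbcle0}/\ref{le:tbcle}), and (c) by pulling $\preceqccc^N$ back along $f\times f$, checking it is a closed causal preorder via the criterion $\gDel\subset\gPo$ of Theorem~\ref{th:gDel1}, and invoking minimality of $\gDelccc^M$. The only cosmetic difference is that for (a) you cite Lemma~\ref{le:morcausalpath} where the paper cites Lemma~\ref{le:tbcle0}, and your citation is arguably the more apt one since $\preceqps$ is defined via (not-necessarily-strict) causal paths.
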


\begin{proof}
(a)
follows from Lemma~\ref{le:tbcle0} and the definition of $\preceqps$.

\spa
(b)
follows from Lemmas~\ref{le:tbcle0} and~\ref{le:morcausalpath}.

\spa
(c)
Let $A = \{ (x,y) \in M^2; f(x) \preceqccc f(y) \}$.
We shall prove that $A\supset\gDelccc$.
For that purpose, using Theorem~\ref{th:gDel1}, it is enough to check that $A$ is the graph of a closed preorder and $A\supset\gDel$.

\spa
(c)-(i)
$A$ is clearly the graph of a preorder and since $A = (\opb{f} \times \opb{f})(\gDelccc)$, it is closed.

\spa
(c)-(ii)
Let $x\preceql y$.
If $f$ is strictly causal then $f(x)\preceqccc f(y)$ by (b) and~\eqref{eq:gDelgDelccc}.
If $\gDelps^N \subset \gDelccc^N$, then $f(x)\preceqccc f(y)$ by (a) and~\eqref{eq:gDelgDelccc}.
Hence $\gDel\subset A$.
\end{proof}

\begin{proposition}
Let $(M,g)$ be a Lorentzian spacetime and let $(M,\gamma)$ be the associated causal manifold.
\banum
\item
One has
\eq
&&\gDel \subset \gDelps \subset \ol{\gDel} \subset \gDelccc.
\eneq
\item
The preoreder $\gDelps$ is a proper order if and only if the preorder $\gDelccc$ is a proper order and in this case, one has $\ol{\gDel} = \gDelps = \gDelccc$.
\eanum
\end{proposition}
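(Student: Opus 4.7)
For part~(a), two of the three inclusions come for free. The inclusion $\gDel\subset\gDelps$ is already in~\eqref{eq:gDelgDelccc}, and $\ol{\gDel}\subset\gDelccc$ follows since $\gDel\subset\gDelccc$ by~\eqref{eq:gDelgDelccc} and $\gDelccc$ is closed by construction. The substantive inclusion is $\gDelps\subset\ol{\gDel}$. The plan is to use the time-orientation of $(M,g)$: fix a globally defined smooth timelike vector field $X$, i.e.\ a smooth section of $\gamma$. Given a causal path $c$ from $x$ to $y$, the curve $t\mapsto\Phi^X_{\epsilon t}(y)$ is strictly causal for $\epsilon>0$, so $y_\epsilon\eqdot\Phi^X_\epsilon(y)\in\futai{y}$. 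I would then invoke the Lorentzian \emph{push-up} property -- the concatenation of a causal path with a strictly causal one can, inside a convex normal neighborhood of any of its points, be perturbed to a strictly causal path -- to conclude $(x,y_\epsilon)\in\gDel$, and then let $\epsilon\to 0$ to get $(x,y)\in\ol{\gDel}$.

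For part~(b), one direction is formal. Assuming $\gDelps$ is a proper order, Proposition~\ref{pro:proper}~(ii) makes it closed; it is also causal (the corollary stated just before Proposition~\ref{pro:increas}), so the minimality of $\gDelccc$ among closed causal preorders yields $\gDelccc\subset\gDelps$. Combined with part~(a) this gives $\ol{\gDel}=\gDelps=\gDelccc$, and $\gDelccc$ inherits the proper order property.

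The converse is the substantive half. Assuming $\gDelccc$ is a proper order, my plan is to prove that $\gDelps$ is closed, i.e.\ $\ol{\gDel}\subset\gDelps$; the formal argument above then gives $\gDelps=\gDelccc=\ol{\gDel}$ and the conclusion (antisymmetry of $\gDelps$ is automatic from $\gDelps\subset\gDelccc$). Given $(x,y)\in\ol{\gDel}$, pick approximating $(x_n,y_n)\in\gDel$ and strictly causal paths $c_n\colon[0,1]\to M$ from $x_n$ to $y_n$ provided by Lemma~\ref{le:tbcle}. Since $\gDelps\subset\gDelccc$ by~(a) and $\gDelccc$ is proper, the images $c_n([0,1])$ lie, for $n$ large, in the compact causal diamond $\futccc{K_1}\cap\pasccc{K_2}$ for $K_1,K_2$ compact neighborhoods of $x,y$. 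The main obstacle is the extraction step: one must invoke the Lorentzian limit curve lemma (Arzel\`a--Ascoli applied to the $c_n$ reparametrized by arc length in an auxiliary Riemannian metric) to obtain a continuous causal limit curve from $x$ to $y$, and then upgrade this to a piecewise smooth causal path witnessing $(x,y)\in\gDelps$. The upgrade is achieved by smoothing the limit in small coordinate patches and reapplying push-up as in~(a), using that the chronological future of $x$ is open so that intermediate vertices can be freely adjusted.
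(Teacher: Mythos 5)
Your proof is correct and takes essentially the same route as the paper. The paper defers the two substantive steps --- the inclusion $\gDelps\subset\ol{\gDel}$ and, given global hyperbolicity, the closedness of $\gDelps$ --- to the cited results of Minguzzi--S\'anchez \cite{MS08}*{Prop.~2.17, Rem.~2.20}, and your push-up and limit-curve arguments are exactly the standard Lorentzian ingredients behind those citations, while your treatment of the formal inclusions and of the easy direction of (b) via Proposition~\ref{pro:proper}~(ii) and the minimality of $\gDelccc$ matches the paper's proof verbatim.
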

One shall be aware that the inclusion $\ol{\gDel} \subset \gDelccc$ may be strict since the closure of a transitive relation need not be transitive, even in Lorentzian spacetimes.

\begin{proof}
(a)
The only inclusion left to prove is $\gDelps \subset \ol{\gDel}$.
It is classical, see for instance~\cite{MS08}*{Prop.~2.17}.

\spa
(b)
If $\gDelps$ is proper, then it is closed, so equal to $\preceqccc$ which is therefore a proper preorder.
For the converse and the second claim, see~\cite{MS08}*{Remark.~2.20}.
\end{proof}

We now extend the classical definition of global hyperbolicity of Lorentzian spacetimes to general causal manifolds as follows:

\begin{definition}\label{def:spacetime2}
A causal manifold $(M,\gamma)$ is \define{globally hyperbolic} if $\gDelccc$ is a proper order.
\end{definition}

\begin{example}
Let $M = \R^2 \setminus \{(1,0)\}$ and $\gamma = M \times (\R_{>0})^2$.
Then $(M,\gamma)$ is a causal manifold.
One easily checks that
\eqn
&&\futi{(0,0)} = \{(0,0)\} \cup (\R_{>0})^2,\\
&&\futps{(0,0)} = (\R_{\geq 0})^2 \setminus \big( [1,+\infty) \times \{0\} \big),\\
&&\futccc{(0,0)} = \ol{\futi{(0,0)}} = (\R_{\geq 0})^2 \setminus \{(1,0)\}.
\eneqn
In particular, $\futps{(0,0)}$ is neither closed nor open.
\end{example}

\subsection{Cauchy time functions and G-causal manifolds}\label{subsect:Gcausal}

The terminology G-causal below is not inspired by gravitation but by the name of Geroch.

\begin{definition}\label{def:Gcausal}
(a)
A \define{Cauchy time function} on a causal manifold $(M,\gamma)$ is a submersive causal morphism $q \colon (M,\gamma) \to (\R,+)$ which is proper on the sets $\futccc{K}$ and $\pasccc{K}$ for any compact set $K \subset M$.

\spa
(b)
A \define{G-causal manifold} $(M,\gamma,\tim)$ is the data of a causal manifold $(M,\gamma)$ together with a Cauchy time function $\tim$.
\end{definition}

\begin{proposition}\label{prop:timeIncreas}
A Cauchy time function on a causal manifold $(M,\gamma)$ is strictly causal and is increasing as a function from $(M,\preceqccc)$ to $(\R,\leq)$.
It is strictly increasing on strictly causal paths.
\end{proposition}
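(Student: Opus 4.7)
The first assertion is immediate from Proposition~\ref{pro:strict}: by definition a Cauchy time function is a submersive causal morphism to $(\R,+)$, hence strictly causal.

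For the second assertion I would invoke Proposition~\ref{pro:increas}(c), which says that a strictly causal morphism $f\cl(M,\gamma_M)\to(N,\gamma_N)$ is increasing from $(M,\preceqccc)$ to $(N,\preceqccc)$. Taking $f=\tim$ and $N=\R$ with the cone $+$, this gives increasingness with respect to the cc-preorder on $\R$. Thus the only nontrivial point is to identify the cc-preorder on $(\R,+)$ with the usual order $\leq$. The inclusion $\leq\,\subset\,\preceqccc$ follows from Remark~\ref{not:causalI2} (which identifies $\preceql$ with $\leq$ on $(\R,+)$) together with~\eqref{eq:gDelgDelccc}. For the reverse inclusion I would check that $\leq$ is itself a closed causal preorder on $(\R,+)$: it is obviously closed and transitive, and causality in the sense of Definition~\ref{def:causalpo} means that $\{(x,y)\in\R^2;x\le y\}$ is a $(\R_{<0}\times\R_{>0})$-set, which by Proposition~\ref{pro:lgtop} reduces to the trivial verification that $x-a\le y+b$ whenever $x\le y$ and $a,b>0$. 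Since $\preceqccc$ is the finest closed causal preorder, this forces $\preceqccc\,\subset\,\leq$, and the two preorders coincide.

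For the last assertion, let $c\cl I\to M$ be a strictly causal path. Since $\tim$ is strictly causal, Lemma~\ref{le:morcausalpath} ensures that $\tim\circ c\cl I\to\R$ is again strictly causal, which by definition means that its one-sided derivatives $(\tim\circ c)'_l(t)$ and $(\tim\circ c)'_r(t)$ lie in $\R_{>0}$ at every $t\in I$. A standard application of the mean value theorem on each smooth piece then shows that $\tim\circ c$ is strictly increasing on $I$, which is the claim.

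The essentially nontrivial step is the identification $\preceqccc\,=\,\leq$ on $(\R,+)$; everything else is a direct application of results already established in the excerpt. No obstacle of a conceptual nature is expected.
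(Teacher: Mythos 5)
Your proof is correct and follows essentially the same route as the paper: the paper's proof consists of the single sentence ``Both results follow from Propositions~\ref{pro:strict} and~\ref{pro:increas}, since $\preceqccc^{(\R,+)}$ is $\leq$.'' You invoke exactly Propositions~\ref{pro:strict} and~\ref{pro:increas}(c), and you supply the details the paper leaves implicit: the verification via Proposition~\ref{pro:lgtop} that $\leq$ is a closed causal preorder on $(\R,+)$ (hence $\preceqccc\subset\leq$), combined with Remark~\ref{not:causalI2} and~\eqref{eq:gDelgDelccc} for the reverse inclusion, and a mean-value-theorem argument via Lemma~\ref{le:morcausalpath} for strict increasingness along strictly causal paths. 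No gaps.
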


\begin{proof}
Both results follow from Propositions~\ref{pro:strict} and~\ref{pro:increas}, since $\preceqccc^{(\R,+)}$ is $\leq$.
\end{proof}

The above proposition implies that a causal manifold with a Cauchy time function cannot have strictly causal loops.
However, it may have causal loops, as Example~\ref{exa:loop2} shows.

In the definition of a Cauchy time function, it is enough to assume properness on the future and past of each point.

\begin{proposition}\label{pro:point}
Let $q \colon (M,\gamma) \to (\R,+)$ be a submersive causal morphism which is proper on the sets $\futccc{x}$ and $\pasccc{x}$ for any $x \in M$.
Then $q$ is a Cauchy time function on $(M,\gamma)$.
\end{proposition}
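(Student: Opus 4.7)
The plan is to prove that for every compact $K \subset M$ and every compact interval $[a,b] \subset \R$, the set $\futccc{K} \cap q^{-1}([a,b])$ is compact; the corresponding statement for $\pasccc{K}$ is entirely symmetric. This set is already closed in $M$: by Proposition~\ref{pro:proper}(i), $\futccc{K}$ is closed (since $\preceqccc$ is, by definition, a closed preorder), and $q^{-1}([a,b])$ is closed. So it suffices to show it is contained in some compact subset of $M$.

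The crucial local step is to show that, for each $y \in K$, one can find $z_y \in M$ with $y \in \Int(\futccc{z_y})$. By Lemma~\ref{le:local}, pick a constant cone $U \times \theta$ contained in $\gamma$ with $y \in U$, choose any $v \in \theta$, and set $z_y = y - \epsilon v$ for $\epsilon > 0$ small enough that $z_y \in U$. Lemma~\ref{le:local2} then yields $z_y \preceql y$, i.e.\ $y \in \futai{z_y}$. Since $z_y \neq y$, Proposition~\ref{pro:Iopen} (applied to $A = \{z_y\}$) says $y$ lies in the open set $\futai{z_y} \setminus \{z_y\}$. Using the inclusion $\gDel \subset \gDelccc$ from~\eqref{eq:gDelgDelccc}, this open set is contained in $\futccc{z_y}$, so $y \in \Int(\futccc{z_y})$.

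Now the compactness of $K$ supplies a finite subcover $K \subset \bigcup_{i=1}^k \Int(\futccc{z_{y_i}})$, whence $\futccc{K} \subset \bigcup_{i=1}^k \futccc{z_{y_i}}$ by transitivity of $\preceqccc$, and therefore
\[ \futccc{K} \cap q^{-1}([a,b]) \;\subset\; \bigcup_{i=1}^k \bigl( \futccc{z_{y_i}} \cap q^{-1}([a,b]) \bigr). \]
Each set on the right is compact by the hypothesis that $q$ is proper on each $\futccc{z_{y_i}}$, so the right-hand side is a finite union of compacts, and our closed set is a closed subset of this compact, hence compact. The argument for $\pasccc{K}$ runs identically, producing instead $w_y = y + \epsilon v$ and invoking the analogue of Proposition~\ref{pro:Iopen} in $(M,\gamma^a)$ (which holds by the same proof). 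I expect the only genuinely nontrivial step to be the interior-domination claim $y \in \Int(\futccc{z_y})$: this is what lets the compactness of $K$ be traded for a finite family of points to which the pointwise properness hypothesis can be applied.
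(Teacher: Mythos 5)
Your proof is correct and follows essentially the same line as the paper's: for each point of $K$ pick a nearby strictly chronologically preceding/succeeding point whose cc-future/past is a neighborhood of it (via Lemma~\ref{le:local2}, Proposition~\ref{pro:Iopen}, and $\gDel\subset\gDelccc$), reduce to finitely many such points by compactness of $K$, and invoke the pointwise properness hypothesis. The only cosmetic difference is that the paper routes the argument through compact neighborhoods $V_x$ of each point, whereas you use the open sets $\Int(\futccc{z_y})$ directly and conclude via transitivity of $\preceqccc$.
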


\begin{proof}
The proof is classical.
Let $K \subset M$ be a compact set and let $a,b\in \R, a\leq b$.
We shall prove that $q^{-1}([a,b]) \cap \pasccc{K}$ is compact, the case of $\futccc{K}$ being similar.

For any $x \in K$, let $y_x \in \futai{x} \setminus \{x\}$.
Then $x \in \pasai{y_x} \setminus \{y_x\}$, and this set being open, there is a compact neighborhood of $x$, say $V_x$, included in $\pasai{y_x} \setminus \{y_x\}$.
Since  $q^{-1}([a,b]) \cap \pasccc{V_x}$ is closed  (by Proposition~\ref{pro:proper}~(i)) and contained in the compact set $q^{-1}([a,b]]) \cap \pasccc{y_x}$, it is compact.

We cover $K$ with finitely many $V_x$'s, say the family $\{V_{x_i}\}_{i\in I}$. Then 
$q^{-1}([a,b]) \cap \pasccc{K}$ is closed and contained in the compact set $\bigcup_i \left( q^{-1}([a,b]) \cap \pasccc{V_{x_i}} \right)$.
Therefore,  it is compact.
\end{proof}

\begin{proposition}\label{pro:Gcausal3}
If a causal manifold admits a Cauchy time function, then its cc-preorder is proper.
\end{proposition}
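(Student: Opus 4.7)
The plan is to show directly that every causal diamond $\futccc{A} \cap \pasccc{B}$, with $A$ and $B$ compact, is compact, since this is the characterization of properness given right after Definition~\ref{def:regcone}. The Cauchy time function $q$ will serve two roles: it will constrain such a diamond into the preimage of a bounded interval of $\R$, and its properness on $\futccc{K}$ will then make this preimage compact.

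More precisely, I would proceed as follows. First, by Proposition~\ref{prop:timeIncreas}, $q$ is increasing as a map $(M,\preceqccc) \to (\R,\leq)$. Fix compact subsets $A, B \subset M$ and set
\[
\alpha = \min q(A), \qquad \beta = \max q(B),
\]
which exist and are finite since $q(A)$ and $q(B)$ are compact. For any $x \in \futccc{A} \cap \pasccc{B}$, pick $a \in A$ and $b \in B$ with $a \preceqccc x \preceqccc b$; monotonicity of $q$ gives $\alpha \leq q(a) \leq q(x) \leq q(b) \leq \beta$. Hence
\[
\futccc{A} \cap \pasccc{B} \subset \futccc{A} \cap q^{-1}([\alpha,\beta]).
\]

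Second, the set on the right is compact: by Definition~\ref{def:Gcausal}, $q$ is proper on $\futccc{A}$, so the preimage of the compact interval $[\alpha,\beta]$ intersected with $\futccc{A}$ is compact. Third, the cc-preorder $\preceqccc$ is closed by construction, so Proposition~\ref{pro:proper}~(i) applied to the compact sets $A$ and $B$ shows that $\futccc{A}$ and $\pasccc{B}$ are both closed in $M$, whence $\futccc{A} \cap \pasccc{B}$ is closed. A closed subset of a compact set is compact, so $\futccc{A} \cap \pasccc{B}$ is compact, which is exactly what properness of $\preceqccc$ demands.

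There is really no serious obstacle here: the whole argument is a straightforward assembly of the monotonicity of $q$ (Proposition~\ref{prop:timeIncreas}), the properness of $q$ on the future of a compact set (built into the definition of a Cauchy time function), and the closedness of the cc-preorder (which gives closedness of $\futccc{A}$ and $\pasccc{B}$ via Proposition~\ref{pro:proper}~(i)). The only place to be careful is to use the correct version of the properness hypothesis on $q$, namely that $q$ is proper on $\futccc{K}$ (and $\pasccc{K}$) for compact $K$, not merely proper as a map $M \to \R$.
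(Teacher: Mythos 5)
Your proof is correct and follows essentially the same route as the paper: bound the causal diamond inside $q^{-1}([\alpha,\beta])$ using monotonicity of $q$, use properness of $q$ on $\futccc{A}$ to get compactness of $\futccc{A}\cap q^{-1}([\alpha,\beta])$, and then intersect with the closed set $\pasccc{B}$. The only cosmetic difference is that you note closedness of both $\futccc{A}$ and $\pasccc{B}$, whereas the paper only needs (and cites) closedness of $\pasccc{B}$.
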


\begin{proof}
Let $q$ be a Cauchy time function on a causal manifold $(M,\gamma)$.
Since $q$ is increasing, one has $q(\pasccc{x})\subset(-\infty,q(x)]$ and $\tim(\futccc{x}) \subset [q(x),+\infty)$ for any $x\in M$.
Let $K$ and $L$ be compact subsets of $M$.
There exist real numbers $a \leq b$ such that $\futccc{K} \cap \pasccc{L} \subset \opb{\tim}([a,b])$.
Since $\tim$ is proper on $\futccc{K}$, the set $\opb{\tim}([a,b]) \cap \futccc{K}$ is compact.
Recalling that $\pasccc{L}$ is closed, the set $\futccc{K}\cap\pasccc{L} = \opb{\tim}([a,b]) \cap \futccc{K} \cap \pasccc{L}$ is compact.
\end{proof}

\begin{proposition}\label{prop:timeSurj}
Let $q$ be a Cauchy time function on $(M,\gamma)$ and let $x \in M$.
Then $\futai{x}$ is not relatively compact and $q(\futai{x}) = q(\futccc{x}) = [q(x), +\infty)$.
In particular, G-causal manifolds cannot be compact and Cauchy time functions are surjective.
\end{proposition}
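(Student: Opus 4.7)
The plan is to produce a global smooth vector field $V$ on $M$ taking values in $\gamma$ and satisfying $T\tim \cdot V \equiv 1$; its integral curve starting at $x$ will then be strictly causal, hit every value $\tim(x)+t$ with $t \geq 0$, and lie entirely in $\futai{x}$ by Lemma~\ref{le:tbcle}. To construct $V$, I would apply Lemma~\ref{le:local} to obtain a basis of constant cones $\{U_i \times \theta_i\}$ contained in $\gamma$, pick $v_i \in \theta_i$ and a partition of unity $\{\psi_i\}$ subordinate to $\{U_i\}$; then $V_0 = \sum_i \psi_i v_i$ is a smooth section of $\gamma$ by fiberwise convexity, and $T\tim \cdot V_0 > 0$ everywhere because $\tim$ is strictly causal (Proposition~\ref{pro:strict}), so the rescaling $V = V_0 / (T\tim \cdot V_0)$ is well-defined and again lies in $\gamma$.

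Let $\phi_t(x)$ denote the maximal integral curve of $V$ starting at $x$, defined on some $[0,T)$. Since $\frac{d}{dt}\tim(\phi_t(x)) = T\tim \cdot V(\phi_t(x)) = 1$, one has $\tim(\phi_t(x)) = \tim(x)+t$, and $t \mapsto \phi_t(x)$ is a strictly causal path starting at $x$, so $\phi_t(x) \in \futai{x}$ for every $t \in [0,T)$. By Theorem~\ref{th:gDel1}(iv) applied to the causal preorder $\preceqccc$, one has $\futai{x} \subset \futccc{x}$; hence the trajectory lies in $\opb{\tim}([\tim(x),\tim(x)+T)) \cap \futccc{x}$. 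If $T$ were finite, this set would be contained in the compact set $\opb{\tim}([\tim(x),\tim(x)+T]) \cap \futccc{x}$ (using properness of $\tim$ on $\futccc{x}$ together with closedness of the latter by Proposition~\ref{pro:proper}(i)), and a standard ODE-extension argument would contradict the maximality of $T$. Hence $T = +\infty$ and $[\tim(x),+\infty) \subset \tim(\futai{x})$.

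The reverse inclusion $\tim(\futai{x}) \subset \tim(\futccc{x}) \subset [\tim(x),+\infty)$ follows from Proposition~\ref{prop:timeIncreas}, yielding both announced equalities. Unboundedness of $\tim$ on $\futai{x}$ rules out relative compactness; in particular $M$ itself cannot be compact, so no G-causal manifold is compact. Finally, $-\tim$ is easily verified to be a Cauchy time function on the causal manifold $(M,\gamma^a)$, in which the chronological future of $x$ is $\pasai{x}$; applying what was just proved gives $-\tim(\pasai{x}) = [-\tim(x),+\infty)$, i.e.\ $\tim(\pasai{x}) = (-\infty,\tim(x)]$, and consequently $\tim(M) = \R$.

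I expect the main obstacle to be justifying global existence of the integral curve in the second paragraph: properness of $\tim$ on $\futccc{x}$, combined with the confining inclusion $\futai{x} \subset \futccc{x}$, is precisely what rules out finite-time escape. The vector-field construction itself is routine once strict causality and submersivity of $\tim$ are in hand, and the symmetric argument for the past is a formal consequence of replacing $(\gamma,\tim)$ by $(\gamma^a,-\tim)$.
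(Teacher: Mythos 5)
Your proof is correct, but it takes a genuinely different route from the paper's. The paper argues by contradiction: if $\futai{x}$ were relatively compact, then $q\bigl(\ol{\futai{x}}\bigr)$ would be a compact interval $[q(x),t_\infty]$; taking a point $y\in\ol{\futai{x}}$ with $q(y)=t_\infty$ and any $v\in\gamma_y$, the fact that $\ol{\futai{x}}$ is a $\gamma$-set lets one push slightly along $y+tv$ while staying in $\ol{\futai{x}}$, and strict causality of $q$ forces $q$ to exceed $t_\infty$ there, a contradiction. Your argument is constructive: you glue a global vector field $V$ valued in $\gamma$ with $Tq\cdot V\equiv1$ (via a partition of unity over a cover by constant cones, using fiberwise convexity of $\gamma$ and strict causality from Proposition~\ref{pro:strict}), and then show that the resulting integral curve through $x$ stays in $\futai{x}\subset\futccc{x}$, is confined to a compact set on any finite time interval because $q$ is proper on $\futccc{x}$, and hence by the ODE escape lemma exists for all positive time. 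Both approaches use the same two ingredients in the end — strict causality and properness on $\futccc{x}$ — but the paper deploys them locally at a boundary maximum, while you deploy them globally to build a complete timelike flow, which is closer in spirit to Geroch's original constructions. Your explicit reduction of surjectivity to the time-reversed manifold $(M,\gamma^a,-q)$ is also fine; this is the symmetry the paper leaves implicit. One small point worth spelling out if you write this up: the vectors $v_i\in\theta_i$ live in the chart $\phi_i$, so the summands $\psi_i v_i$ must be understood as the pullbacks $(T\phi_i)^{-1}(\phi_i(\cdot),v_i)$ extended by zero, and it is the conicity together with convexity of each $\gamma_x$ (not merely convexity) that guarantees the nonnegative combination $\sum_i\psi_i(x)\tilde v_i(x)$ lies in $\gamma_x$.
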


\begin{proof}
One has $q(\futai{x}) \subset q(J^+_{cc}(x)) \subset [q(x), +\infty)$ by Proposition~\ref{prop:timeIncreas}.
Since $\futai{x}$ is connected, the set $q(\futai{x})$ is an interval.
By properness of $q$ on $\futccc{x}$, hence on $\ol{\futai{x}}$, this interval is bounded if and only if $\futai{x}$ is relatively compact.
Suppose that this is the case.
Then $q \left( \ol{\futai{x}} \right) = [q(x),t_\infty]$ for some $t_\infty \in [q(x), +\infty)$.
Let $y \in \ol{\futai{x}}$ with $q(y) = t_\infty$, fix a chart $U$ at $y$, and let $v \in \gamma_y$.
The set $\ol{\futai{x}}$, being the closure of a $\gamma$-set, is also a $\gamma$ set.
Therefore, for $\epsilon>0$ small enough, the path $c \colon (-\epsilon,\epsilon) \to U, t \mapsto y + tv$ satisfies $c \big( [0,\epsilon) \big) \subset \ol{\futai{x}}$ and this shows that there exists $\eta >0$ such that $[q(x), t_\infty+\eta) \subset q \left( \ol{\futai{x}} \right)$, which contradicts our assumption.
\end{proof}

\begin{theorem}\label{th:GBS}
If a Lorentzian spacetime is globally hyperbolic, then it admits a Cauchy time function.
\end{theorem}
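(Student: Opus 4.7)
The strategy is to reduce the theorem to the smooth time-function construction of Bernal--S\'anchez~\cite{BS05} (refining Geroch~\cite{Ge70}) and to check that the function they produce satisfies Definition~\ref{def:Gcausal}.

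First I would reconcile vocabulary. For a Lorentzian spacetime $(M,g)$, global hyperbolicity of $(M,\gamma_g)$ in the sense of Definition~\ref{def:spacetime2}, i.e.\ properness of $\preceqccc$, coincides with the classical notion: by the proposition preceding Definition~\ref{def:spacetime2}, once $\preceqccc$ is proper one has $\ol{\gDel} = \gDelps = \gDelccc$, and in the Lorentzian setting this common relation is the classical causal relation $J$. Then Bernal--S\'anchez supplies a smooth submersion $\tim\cl M\to\R$ whose level sets $\Sigma_t = \opb{\tim}(t)$ are smooth spacelike Cauchy hypersurfaces and which is strictly increasing along future-directed causal curves. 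Strict increase along future-directed timelike vectors gives $d\tim(v)>0$ for every $v\in\gamma_g$, so by continuity $d\tim(\ol{\gamma_g})\subset\R_{\geq 0}$; hence $\tim$ is a submersive causal morphism $(M,\gamma_g)\to(\R,+)$.

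It remains to verify the properness condition. By Proposition~\ref{pro:point} it suffices to show that $\opb{\tim}([a,b])\cap\futccc{x}$ is compact for each $x\in M$ and each $a\leq b$ (and the symmetric statement for the past). Assuming $\tim(x)\leq b$, set $\Sigma_b = \opb{\tim}(b)$. Since $\Sigma_b$ is a Cauchy hypersurface, the set $L \eqdot J^+(x)\cap\Sigma_b$ is compact by a classical argument. Moreover, any $y\in\opb{\tim}([a,b])\cap J^+(x)$ lies in $J^-(\Sigma_b)$: a future-directed causal curve from $y$ meets $\Sigma_b$ in some point $z$, and transitivity yields $x\leq y\leq z$, so $z\in L$ and $y\in J^-(z)\subset J^-(L)$. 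Therefore $\opb{\tim}([a,b])\cap\futccc{x}\subset J^+(x)\cap J^-(L)$; the right-hand side is compact by global hyperbolicity, and the left-hand side is a closed subset, hence compact. The past case is symmetric.

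The substantive content of the theorem is the existence of the smooth submersion $\tim$ with smooth spacelike Cauchy level sets, which is the deep Bernal--S\'anchez construction; there is no expectation of reproving it. The only genuine obstacle is bookkeeping: translating the classical existence statement into the abstract framework of causal manifolds developed here, in particular matching $\gDelccc$ with the classical causal relation $J$ so that properness of $\tim$ on $\futccc{x}$ follows from properness of classical causal diamonds.
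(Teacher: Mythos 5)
Your proposal is correct and follows the same route as the paper: cite Geroch/Bernal--S\'anchez (which is the content of the Minguzzi--S\'anchez theorem the paper refers to) for the existence of a smooth Cauchy temporal function, identify the abstract cc-preorder with the classical causal relation $J$ using the proposition preceding Definition~\ref{def:spacetime2}, and reduce the properness check to the future and past of single points via Proposition~\ref{pro:point}. The paper leaves these verifications as a citation; you have spelled them out, and they are sound.
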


\begin{proof}
See~\cite{MS08}*{Thm.~3.75} and~\cite{Ge70}*{Prop.~8} (which suffices in view of Proposition~\ref{pro:point}).
See also~\cite{FS11} for a more general version.
\end{proof}

\section{Sheaves on causal manifolds}\label{section:gcausal}

\subsection{Microsupport}\label{subsection:SSi}
We recall here a few basic results on the microlocal theory of sheaves and refer to~\cite{KS90}.

For simplicity, we denote by $\cor$ a field, although all results would remain true when $\cor$ is a commutative unital ring of finite global dimension.

Let $\Derb(\cor_M)$ denote the bounded derived category of sheaves of $\cor$-modules on $M$.
For $F\in\Derb(\cor_M)$, its microsupport, or singular support, denoted by $\SSi(F)$, is a closed conic co-isotropic subset of $T^*M$ whose intersection with $T^*_MM$ is $\supp(F)$, the support of $F$ (see~\cite{KS90}*{Def.~5.1.2}).
Roughly speaking, the microsupport describes the set of codirections of non propagation.

For a locally closed subset $A$ of $M$, one denotes by $\cor_A$ the constant sheaf on $A$ with stalk $\cor$ extended by $0$ on $X\setminus A$.
For $F\in\Derb(\cor_M)$, one sets $F_A\eqdot F\tens\cor_A$.
Recall that if $Z$ is closed in $M$, then $\rsect(M;F_Z)\simeq\rsect(Z;F\vert_Z)$.

We shall also make use of the dualizing complex on $M$ denoted by $\omega_M$.
Recall that $\omega_M$ is isomorphic to the orientation sheaf shifted by the dimension of $M$.
It is an invertible sheaf and for a morphism $f\cl M\to N$, one has $\epb{f}\omega_N\simeq\omega_M$ and $\epb{f}\cor_N\simeq\omega_{M/N}=\omega_M\tens\opb{f}\omega_N^{\otimes-1}$.

We first recall a few results of constant use:

\begin{proposition}[see~\cite{KS90}*{Prop.~5.3.8}]\label{pro:KS538}
Let $Z,U\subset M$.
Assume that $Z$ is closed and $U$ is open.
Then $\SSi(\cor_Z)\subset N(Z)^\circ$ and $\SSi(\cor_U)\subset N(U)^{\circ a}$.
\end{proposition}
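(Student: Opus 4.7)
The plan is to use the standard criterion for microsupport from~\cite{KS90}: $(x_0;\xi_0)\notin\SSi(F)$ if and only if there is an open neighborhood $\Omega$ of $(x_0;\xi_0)$ in $T^*M$ such that $(\RR\Gamma_{\{\phi\geq 0\}}F)_{x_1}\simeq 0$ for every $C^1$ function $\phi$ on $M$ and every $x_1\in M$ with $\phi(x_1)=0$ and $(x_1;d\phi(x_1))\in\Omega$. The first inclusion is the nontrivial one; the second will then follow by an excision argument.

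For $\SSi(\cor_Z)\subset N(Z)^\circ$, suppose $(x_0;\xi_0)\notin N(Z)^\circ$, so there exists $v_0\in N_{x_0}(Z)$ with $\langle\xi_0,v_0\rangle<0$. By definition of the normal cone one may pick a chart centered at $x_0$, an open convex cone $\theta\ni v_0$ and a neighborhood $W$ of $x_0$ with $W\cap(W\cap Z+\theta)\subset Z$. Then for $(x_1;\eta)$ in a small enough neighborhood $\Omega$ of $(x_0;\xi_0)$ one still finds $v\in\theta$ with $\langle\eta,v\rangle<0$, and the translation $\Phi_t(x)=x+tv$ simultaneously preserves $W\cap Z$ and strictly decreases any $\phi$ with $d\phi(x_1)=\eta$ for small $t>0$. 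Taking stalks at $x_1$ of the distinguished triangle
\eqn
&&\RR\Gamma_{\{\phi\geq 0\}}\cor_Z\to\cor_Z\to\RR j_*\opb{j}\cor_Z\to[+1]
\eneqn
with $j\cl\{\phi<0\}\into M$, the right-hand term is $\varinjlim_W\RR\Gamma(W\cap Z\cap\{\phi<0\};\cor)$. The flow $\Phi_t$ produces a deformation of a cofinal system of such neighborhoods onto a point of $Z\cap\{\phi<0\}$ when $x_1\in Z$ (and the intersection is empty for small $W$ when $x_1\notin Z$), whence the middle arrow is an isomorphism on stalks and $(\RR\Gamma_{\{\phi\geq 0\}}\cor_Z)_{x_1}\simeq 0$.

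For the second inclusion, with $U$ open and $Z=M\setminus U$ closed, the excision triangle $\cor_U\to\cor_M\to\cor_Z\to[+1]$ and the triangle estimate for microsupport give $\SSi(\cor_U)\subset\SSi(\cor_M)\cup\SSi(\cor_Z)\subset T^*_MM\cup N(Z)^\circ=N(Z)^\circ$, the last equality because $0\in N_x(Z)^\circ$ at every $x$. One then invokes the general identity $N(A)=N(M\setminus A)^a$, which is proved by rewriting the defining inclusion $W\cap(W\cap A+\theta)\subset A$ via the substitution $b=a+v$, $w=-v$: this swaps $A$ with its complement and $\theta$ with $-\theta$. Taking polar cones of both sides yields $N(U)^{\circ a}=N(Z)^\circ$, which completes the proof.

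The main obstacle is the uniformity built into the first inclusion: a single open cone $\theta$ and a single admissible direction $v$ must serve for the whole neighborhood $\Omega$ of $(x_0;\xi_0)$ in $T^*M$, and the deformation retraction must behave continuously in the family of base points $x_1$ and test functions $\phi$. Openness of $\theta$ and smoothness of $\phi$ are precisely what allow this uniform construction, turning the pointwise normal-cone information into the microlocal estimate.
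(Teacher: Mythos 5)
The paper does not prove this statement: it cites it directly from \cite{KS90}*{Prop.~5.3.8} (the appendix then develops properties of $N$ and $D$ without redoing the microlocal estimate). So what you have written is a self-contained attempt that has to be judged on its own.

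Your overall strategy is sound: use the characterization of the microsupport via $\RR\Gamma_{\{\phi\geq0\}}$, and reduce the open case to the closed one by excision and the identity $N(U)=N(M\setminus U)^a$, which is Proposition~\ref{prop:cones}~(ii). That reduction is clean and correct, including the small computation $N(U)^{\circ a}=N(M\setminus U)^\circ$.

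The gap is in the key step for the closed case. You claim that ``the flow $\Phi_t$ produces a deformation of a cofinal system of such neighborhoods onto a point of $Z\cap\{\phi<0\}$.'' This is not true, and the conclusion you draw from it (that the second arrow of the triangle is a stalkwise isomorphism because the germ of $\RR\Gamma(W\cap Z\cap\{\phi<0\};\cor)$ is $\cor$ in degree $0$) does not follow from what you wrote. First, a translation $\Phi_t(x)=x+tv$ cannot be a deformation retraction onto a point; it is a homeomorphism onto its image. Second, the sets $W\cap Z\cap\{\phi<0\}$ need not even be connected for small $W$. Take $M=\R^2$ with coordinates $(u_1,u_2)$, $\ol\theta_Z=\{v_1\geq 10|v_2|\}$, $Z=\bigcup_{n\geq 0}\bigl((0,c^n)+\ol\theta_Z\bigr)$ for a small $c\in(0,1)$ (with the convention $c^0$ replaced by $0$), $x_1=(0,0)$, $v_0=(1,0)$, $\phi=-u_1$. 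Then $Z$ is closed, $v_0\in N_{x_1}(Z)$ with $\theta=\Int\ol\theta_Z$, $\langle d\phi,v_0\rangle<0$, and for a cofinal family of balls $B_\delta(x_1)$ the set $B_\delta(x_1)\cap Z\cap\{u_1>0\}$ has several connected components (the cones whose apex lies at distance comparable to $\delta$ do not merge with the main cone inside $B_\delta$). So no retraction onto a point exists for these neighborhoods.

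What does work, and what you should replace this step with, is an argument at the level of the inductive limit rather than of individual neighborhoods. Shrink $W$ so that $\langle d\phi(x),v_0\rangle<-c<0$ on $W$. Then for $t>0$ small the translation $\Phi_t$ satisfies $\Phi_t(Z\cap W)\subset Z$ and $\phi\circ\Phi_t<\phi-ct$ on $W$. Using this, for any small ball $W'\subset W$ around $x_1$ and small $t$ one finds a smaller ball $W''$ with $\Phi_t(W''\cap Z)\subset W'\cap Z\cap\{\phi<0\}$; pulling back by $\Phi_t$ and restricting produces a morphism
$\varinjlim_{W'}\RR\Gamma(W'\cap Z\cap\{\phi<0\};\cor)\to\varinjlim_{W''}\RR\Gamma(W''\cap Z;\cor)$,
and one checks via the homotopy $(\Phi_s)_{s\in[0,t]}$, which preserves both $Z\cap W$ and $\{\phi<0\}$, that this morphism is a two-sided inverse of the restriction map in the limit. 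This gives $(\RR\Gamma_{\{\phi\geq0\}}\cor_Z)_{x_1}\simeq0$ without any contractibility claim for the individual sets. This is also closer in spirit to the argument of \cite{KS90}, which proceeds through the propagation lemma for the $\gamma$-topology rather than through a pointwise retraction.
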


\begin{theorem}[\cite{KS90}*{Prop.~5.4.14}]\label{th:oper1}
Let $F_1,F_2\in\Derb(\cor_M)$.
\bnum
\item
Assume that $\SSi(F_1)\cap\SSi(F_2)^a\subset T^*_MM$.
Then $\SSi(F_1\tens F_2)\subset \SSi(F_1)+\SSi(F_2)$.
\item
Assume that $\SSi(F_1)\cap\SSi(F_2)\subset T^*_MM$.
Then $\SSi(\rhom(F_1,F_2))\subset \SSi(F_1)^a+\SSi(F_2)$.
\enum
\end{theorem}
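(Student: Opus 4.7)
The plan is to reduce both statements to the inverse-image estimate for microsupport applied to the diagonal embedding $\delta\cl M\into M\times M$. Concretely, one has the tautological isomorphisms $F_1\tens F_2\simeq\opb{\delta}(F_1\etens F_2)$ and, for part~(ii), $\rhom(F_1,F_2)\simeq\epb{\delta}\rhom(\opb{q_1}F_1,\epb{q_2}F_2)$, so it suffices to control the microsupport under the external operations and under $\opb{\delta}$ (resp.\ $\epb{\delta}$).

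First I would establish the external-product bounds $\SSi(F_1\etens F_2)\subset\SSi(F_1)\times\SSi(F_2)$ and, dually, $\SSi(\rhom(\opb{q_1}F_1,\epb{q_2}F_2))\subset\SSi(F_1)^a\times\SSi(F_2)$. Both follow from the definition of the microsupport by propagation of test sections: the vanishing of $\mathrm{R}\Gamma_{\phi\geq0}$ on a product at a point $(x_1,x_2)$ can be reduced, by a Fubini-type argument, to the corresponding vanishing on each factor after writing a test function as a sum $\phi_1\oplus\phi_2$.

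Next I would specialize the standard inverse-image estimate $\SSi(\opb{f}G)\subset f_d\opb{f_\pi}\SSi(G)$, valid when $f$ is non-characteristic for $G$, to the diagonal $\delta$. Using diagram~\eqref{eq:lagrcorresp} with the identification $M\times_{M\times M}T^*(M\times M)\simeq T^*M\oplus T^*M$, the map $\delta_\pi$ is the obvious projection and $\delta_d$ is the sum map $(x;\xi_1,\xi_2)\mapsto(x;\xi_1+\xi_2)$. Non-characteristicity of $\delta$ for $F_1\etens F_2$ then reads: every pair $(\xi_1,\xi_2)\in\SSi(F_1)\times_M\SSi(F_2)$ with $\xi_1+\xi_2=0$ must be zero. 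Since $\xi_1+\xi_2=0$ is the same as $\xi_1\in\SSi(F_1)\cap\SSi(F_2)^a$, this is exactly the hypothesis of~(i). Applying the bound to $G=F_1\etens F_2$ and computing the image of $\SSi(F_1)\times\SSi(F_2)$ under $\delta_d$ yields $\SSi(F_1\tens F_2)\subset\SSi(F_1)+\SSi(F_2)$.

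The same bookkeeping handles~(ii): the antipodal twist on the first factor in the external $\rhom$ estimate turns the non-characteristic condition into $(-\xi_1)+\xi_2=0$ with $\xi_1\in\SSi(F_1),\ \xi_2\in\SSi(F_2)$, i.e.\ $\xi_1=\xi_2\in\SSi(F_1)\cap\SSi(F_2)$, which is the hypothesis; and the image under $\delta_d$ of $\SSi(F_1)^a\times\SSi(F_2)$ is $\SSi(F_1)^a+\SSi(F_2)$. The main obstacle in this scheme is the external-product estimate: the inverse-image bound is a general microlocal theorem, but the Künneth-type bound on $\SSi(F_1\etens F_2)$ requires an honest verification at the level of test functions (or a reduction to products of closed half-spaces). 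Once this is secured, everything else is the elementary identification of $\delta_d$ with the sum map.
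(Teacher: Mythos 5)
The paper does not prove this statement; it simply cites \cite{KS90}*{Prop.~5.4.14}. Your proposal reproduces, in outline, exactly the argument given in~\cite{KS90}: write $F_1\tens F_2\simeq\opb{\delta}(F_1\etens F_2)$ and $\rhom(F_1,F_2)\simeq\epb{\delta}\rhom(\opb{q_1}F_1,\epb{q_2}F_2)$ for the diagonal embedding $\delta\cl M\to M\times M$, use the external bounds $\SSi(F_1\etens F_2)\subset\SSi(F_1)\times\SSi(F_2)$ and $\SSi(\rhom(\opb{q_1}F_1,\epb{q_2}F_2))\subset\SSi(F_1)^a\times\SSi(F_2)$ (these are~\cite{KS90}*{Prop.~5.4.1, 5.4.2}), identify $\delta_d$ with the addition map $(x;\xi_1,\xi_2)\mapsto(x;\xi_1+\xi_2)$ so that the hypotheses are precisely non-characteristicity of $\delta$, and conclude by the non-characteristic inverse-image estimate (\cite{KS90}*{Prop.~5.4.5}). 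You are also right to flag the external-product estimates as the genuinely nontrivial input: those do not follow by ``Fubini on test functions'' alone but require the microlocal cut-off machinery of \cite{KS90}*{\S5.2}; in a self-contained write-up you would need to invoke those propositions rather than re-derive them. With that caveat, the argument is correct and is the standard one.
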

Consider a morphism of manifolds $f\cl M\to N$ and recall the maps $f_d$ and $f_\pi$ in~\eqref{eq:lagrcorresp}.
\begin{definition}
Let $G\in\Derb(\cor_N)$.
One says that $f$ is non-characteristic for $G$ if $f_d$ is proper on $\opb{f_\pi}\SSi(G)$.
\end{definition}
Since $\opb{f_\pi}\SSi(G)$ is conic in $M\times_NT^*N$, this is equivalent to the condition
\eq
&&\opb{f_d}T^*_MM\cap\opb{f_\pi}\SSi(G)\subset M\times_NT^*_NN.
\eneq

\begin{theorem}[\cite{KS90}*{Prop.~5.4.4,~5.4.5}]\label{th:oper2}
Consider a morphism of manifolds $f\cl M\to N$.
\bnum
\item
Let $F\in \Derb(\cor_M)$ and assume that $f$ is proper on $\supp(F)$.
Then $\SSi(\roim{f}F)\subset f_\pi\opb{f_d}(\SSi(F))$.
If $f$ is a closed embedding then this inclusion is an equality.
\item
Let $G\in \Derb(\cor_{N})$ and assume that $f$ non-characteristic for $G$.
Then $\SSi(\opb{f}G)\subset f_d\opb{f_\pi}(\SSi(G))$.
If $f$ is a submersion then this inclusion is an equality.
\enum
\end{theorem}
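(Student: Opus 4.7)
The plan is to prove both bounds from the microlocal characterization of the microsupport: recall that $(x_0;\xi_0)\notin\SSi(F)$ if and only if for every $C^1$ function $\psi$ defined near $x_0$ with $\psi(x_0)=0$ and $d\psi(x_0)=\xi_0$, one has $(\RR\Gamma_{\{\psi\geq 0\}}F)_{x_0}\simeq 0$, and similarly for nearby points. Both statements then reduce to transferring propagation between $M$ and $N$ via well-chosen test functions.

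For part (i), let $(y_0;\eta_0)\notin f_\pi\opb{f_d}\SSi(F)$, i.e.\ $\opb{f_d}(\opb{f_\pi}\eta_0)\cap \SSi(F)=\varnothing$. Choose $\psi\in C^1(N)$ with $\psi(y_0)=0$ and $d\psi(y_0)=\eta_0$, and consider $\psi\circ f$ on $M$. For every $x\in\opb{f}(y_0)$ one has $d(\psi\circ f)(x)={}^tf'(x)(\eta_0)\in\opb{f_d}\opb{f_\pi}(\eta_0)$, which by hypothesis avoids $\SSi(F)$. Since $f$ is proper on $\supp(F)$, the fibre $\opb{f}(y_0)\cap\supp(F)$ is compact, so the noncharacteristic propagation theorem for $F$ applies along $\psi\circ f$. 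Combined with the base-change isomorphism
\eqn
&&(\RR\Gamma_{\{\psi\geq 0\}}\roim{f}F)_{y_0}\simeq \rsect\bl\opb{f}(V)\cap\{\psi\circ f\geq 0\};F\br
\eneqn
(for $V$ a small neighborhood of $y_0$), valid because $f$ is proper on $\supp(F)$, the vanishing on $M$ transfers to $\roim{f}F$, giving $(y_0;\eta_0)\notin\SSi(\roim{f}F)$. Equality for a closed embedding follows by taking $F$ supported on the image and comparing stalks.

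For part (ii), factor $f$ as the graph embedding $i_f\cl M\hookrightarrow M\times N$ followed by the second projection $p\cl M\times N\to N$. For a projection, working in local coordinates $M\times N\to N$ one sees directly $\opb{p}G\simeq\cor_M\etens G$ and hence $\SSi(\opb{p}G)=T^*_MM\times\SSi(G)$, which is the desired equality. For the closed embedding $i_f$, the noncharacteristic hypothesis translates into the fact that $\SSi(\opb{p}G)$ meets $T^*_{\Gamma_f}(M\times N)$ only in the zero section, so part~(i) applied to $i_f$ (being a closed embedding, hence an equality) yields the bound $\SSi(\opb{f}G)\subset i_{f,\pi}\opb{i_{f,d}}(T^*_MM\times\SSi(G))$, which unwinds to $f_d\opb{f_\pi}\SSi(G)$. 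For $f$ submersive the previous local model already gave equality, so the composition preserves equality.

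The hardest step is the propagation-plus-base-change argument in (i): one must verify that the non-properness of $f$ globally does not spoil the local propagation, which is exactly why properness on $\supp(F)$ is imposed. Equivalently, the subtle point is the microlocal cutoff allowing one to replace $F$ locally with a compactly-supported variant so that $\roim{f}$ and $\reim{f}$ coincide on the relevant neighborhood of $\opb{f}(y_0)$; without this the stalk formula above fails. Once this is established, (ii) is essentially formal via the graph factorization.
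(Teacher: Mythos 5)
The paper does not prove this statement; it states it with a citation to~\cite{KS90}*{Props.~5.4.4, 5.4.5}, so you are attempting to reconstruct that argument blind.

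Your sketch of part~(i) follows the standard line: test-function characterization of $\SSi$, pull $\psi$ back along $f$, use properness on $\supp(F)$ plus base change to identify $(\RR\Gamma_{\{\psi\geq0\}}\roim{f}F)_{y_0}$ with cohomology of $\RR\Gamma_{\{\psi\circ f\geq0\}}F$ over the compact fibre, then propagate. Modulo minor slips ($\opb{f_d}\opb{f_\pi}(\eta_0)$ should be $f_d\opb{f_\pi}(\eta_0)$), and modulo the real technical work in the cutoff step which you correctly flag, this is the right shape, including the terse remark that the closed-embedding equality comes from $\opb{f}\oim{f}F\simeq F$ and stalk comparison.

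Part~(ii) has a genuine gap. After factoring $f=p\circ i_f$ and correctly handling the projection $p$ (giving $\SSi(\opb{p}G)=T^*_MM\times\SSi(G)$), you assert that ``part~(i) applied to $i_f$ (being a closed embedding, hence an equality) yields the bound $\SSi(\opb{f}G)\subset i_{f,\pi}\opb{i_{f,d}}(T^*_MM\times\SSi(G))$.'' This does not work. Part~(i) controls $\SSi(\roim{j}F)$ in terms of $\SSi(F)$; for a closed embedding $j$ it gives the \emph{equality}
\begin{equation*}
\SSi\bl\oim{j}\opb{j}H\br = j_\pi\opb{j_d}\bl\SSi(\opb{j}H)\br,
\end{equation*}
which expresses the microsupport of the pushforward in terms of $\SSi(\opb{j}H)$ --- the unknown quantity --- and says nothing on its own about how $\SSi(\opb{j}H)$ relates to $\SSi(H)$. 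To extract the non-characteristic estimate one would still have to bound $\SSi(\oim{j}\opb{j}H)$ in terms of $\SSi(H)$ using the non-characteristic hypothesis, which is precisely the content of~\cite{KS90}*{Prop.~5.4.5} and rests on the refined microlocal cutoff and $\gamma$-topology propagation (or, equivalently, on estimates such as~\cite{KS90}*{Prop.~5.4.8}), not on the direct-image statement. A symptom of the confusion is that your displayed inclusion is ill-typed: the left side $\SSi(\opb{f}G)$ is a subset of $T^*M$ while $i_{f,\pi}\opb{i_{f,d}}(\cdot)$ lies in $T^*(M\times N)$; you have swapped the roles of $i_{f,d}$ and $i_{f,\pi}$, which is exactly the inversion between the direct- and inverse-image formulas. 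In short, the inverse-image bound for a non-characteristic closed embedding is the hard core of the theorem and cannot be obtained as a corollary of part~(i); a separate argument is required.
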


Consider a morphism of manifolds $f\cl M\to N$ and let $F\in\Derb(\cor_M)$.
When $f$ is proper on $\supp(F)$, Theorem~\ref{th:oper2} gives a bound to the microsupport of $\roim{f}F$.
However, we shall have to consider a non proper situation.
The next lemma already appeared in~\cite{GS14} (with a slightly different formulation) but we give here another and more elementary proof.
It is a variation on~\cite{KS90}*{Exe.~V.7}.

\begin{lemma}\label{le:ssioimf}
Let $f\cl M\to N$ be a morphism of manifolds and let $F\in\Derb(\cor_M)$.
Assume to be given an increasing family $\{Z_n\}_n$ \lp$n\in\N$\rp\, of closed subsets of $M$ such that $Z_n\subset \Int(Z_{n+1})$ for all $n$ and $M=\bigcup_nZ_n$.
Then
\eq\label{eq:SSoim1}
&&\SSi(\roim{f}F)\subset \ol{\bigcup_n\SSi\bl\roim{f}(F_{Z_n})\br}.
\eneq
\end{lemma}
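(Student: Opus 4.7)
The plan is to combine the Morse characterization of microsupport with a derived inverse limit presentation of $\roim{f}F$. I would first fix $(x_0;\xi_0)$ outside the right-hand side; its closedness provides an open conic neighborhood $V$ of $(x_0;\xi_0)$ in $T^*N$ disjoint from every $\SSi(\roim{f}F_{Z_n})$. By the standard Morse characterization of microsupport (cf.~\cite{KS90}*{Prop.~5.1.1}), verifying that $(x_0;\xi_0)\notin\SSi(\roim{f}F)$ reduces to showing that $(\rsect_{\{\phi\geq 0\}}\roim{f}F)_{x_1}=0$ for every $(x_1;\xi_1)\in V$ and every $C^1$-function $\phi$ near $x_1$ with $\phi(x_1)=0$ and $d\phi(x_1)=\xi_1$.

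Second, I would identify $F\simeq\RR\prolim_n F_{Z_n}$ in $\Derb(\cor_M)$. The natural surjective tower $F_{Z_{n+1}}\twoheadrightarrow F_{Z_n}$, obtained from the canonical factorization of $F\twoheadrightarrow F_{Z_n}$ through $F\twoheadrightarrow F_{Z_{n+1}}$ (coming from the closed inclusion $Z_n\subset Z_{n+1}$), satisfies Mittag--Leffler trivially, and the hypothesis $Z_n\subset\Int(Z_{n+1})$ together with $\bigcup_n Z_n=M$ ensures stalk-wise stabilization (for each $x\in M$, one has $(F_{Z_n})_x=F_x$ for $n$ large enough, with identity transition maps), so that the resulting $\prolim$ really is $F$. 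Since both $\roim{f}$ and $\rsect_{\{\phi\geq 0\}}$ are right derived functors, they commute with derived inverse limits, yielding
\eqn
\rsect_{\{\phi\geq 0\}}\roim{f}F\simeq\RR\prolim_n\rsect_{\{\phi\geq 0\}}\roim{f}F_{Z_n}.
\eneqn

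Third, by continuity of $d\phi$ and openness of $V$, the set $W=\{x_2\in N:(x_2;d\phi(x_2))\in V\}$ is an open neighborhood of $x_1$ in $N$. For every $x_2\in W$ and every $n$, the assumption $V\cap\SSi(\roim{f}F_{Z_n})=\varnothing$ together with the Morse characterization applied to $\roim{f}F_{Z_n}$ gives $(\rsect_{\{\phi\geq 0\}}\roim{f}F_{Z_n})_{x_2}=0$. Hence $\rsect_{\{\phi\geq 0\}}\roim{f}F_{Z_n}|_W$ is a sheaf on $W$ with vanishing stalks, i.e., the zero sheaf. Taking the derived inverse limit of this zero tower on $W$ yields zero, and the stalk at $x_1$ is $(\rsect_{\{\phi\geq 0\}}\roim{f}F)_{x_1}=0$, as required.

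The main technical step I anticipate to require care is the identification $F\simeq\RR\prolim_n F_{Z_n}$ in the bounded derived category, specifically the passage to a bounded complex representing $F$: one must ensure that Mittag--Leffler holds term-wise and that the resulting $\RR\prolim$ is compatible with the identification $\prolim F_{Z_n}=F$ at the level of sheaves, which is precisely where the interior condition $Z_n\subset\Int(Z_{n+1})$ is essential, exactly as in the variant of~\cite{KS90}*{Exe.~V.7} alluded to by the authors.
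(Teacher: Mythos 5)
Your approach differs from the paper's: the paper establishes an isomorphism of sections over a well-chosen pair of open subsets $\Omega_0\subset\Omega_1$ for each $F_{Z_n}$ via \cite{KS90}*{Prop.~5.2.1}, then passes to the limit using the Grothendieck Mittag--Leffler criterion on global sections, whereas you try to work at the level of stalks through the Morse characterization \cite{KS90}*{Prop.~5.1.1} and a derived inverse limit $F\simeq\RR\prolim_n F_{Z_n}$. The identification $F\simeq\RR\prolim_n F_{Z_n}$ and the commutation of the right adjoints $\roim{f}$ and $\rsect_{\{\phi\geq0\}}=\rhom(\cor_{\{\phi\geq0\}},\scdot)$ with homotopy limits are sound. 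The problem is in the third step.

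You claim that for every $x_2\in W$ one has $(\rsect_{\{\phi\geq0\}}\roim{f}F_{Z_n})_{x_2}=0$, and deduce that $\rsect_{\{\phi\geq0\}}\roim{f}F_{Z_n}\vert_W$ is the zero sheaf. This is a misapplication of the Morse characterization. Proposition~5.1.1 of~\cite{KS90} gives the vanishing of $(\rsect_{\{\psi\geq0\}}G)_{x_2}$ only for test functions $\psi$ normalized so that $\psi(x_2)=0$. Your $\phi$ is a \emph{fixed} function with $\phi(x_1)=0$; at a point $x_2\in W$ with $\phi(x_2)>0$ one has $x_2\in\Int\{\phi\geq0\}$, hence
\[
(\rsect_{\{\phi\geq0\}}\roim{f}F_{Z_n})_{x_2}\simeq(\roim{f}F_{Z_n})_{x_2},
\]
which has no reason to vanish. (At points with $\phi(x_2)<0$ the stalk is trivially zero, and at points with $\phi(x_2)=0$ Morse does apply; but this only gives vanishing on the closed half $W\cap\{\phi\leq0\}$.) Consequently the tower $\rsect_{\{\phi\geq0\}}\roim{f}F_{Z_n}\vert_W$ is not a tower of zero objects, and you cannot conclude $(\rsect_{\{\phi\geq0\}}\roim{f}F)_{x_1}=0$ this way.

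The detour through the open set $W$ was precisely intended to circumvent the fact that stalk functors do \emph{not} commute with $\RR\prolim$, but the premise feeding into it fails. Without a sheaf-level vanishing on some open neighborhood of $x_1$, one is left with vanishing of stalks at $x_1$ for each $n$ separately, and that does not pass to the homotopy limit. To repair the argument you would need a section-level Morse/propagation statement over small open sets around $x_1$ that is stable under $\RR\prolim$ via Mittag--Leffler --- which is exactly the content of \cite{KS90}*{Prop.~5.2.1} combined with the Grothendieck ML criterion \cite{KS90}*{Prop.~2.7.1~(iii)}, i.e., the route the paper takes.
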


\begin{proof}
Let $p\in T^*N$ with
$p\notin\ol{\bigcup_n\SSi\bl\roim{f}(F_{Z_n})\br}$.

\spa
(i)
We may assume $p\notin T^*_NN$, otherwise the result is obvious.

\spa
(ii)
We may assume that $N$ is open in some vector space $\BBV$ and there exist an open neighborhood $V$ of $\pi(p)$ in $N$, an open cone $\lambda_0$ in $\BBV^*$ with $p\in V\times\lambda_0$ such that
\eqn
&&(V\times \lambda_0)\cap\SSi\bl\roim{f}(F_{Z_n})\br=\varnothing\mbox{ for all }n.
\eneqn
Let $\gamma_1$ be a closed convex proper cone contained in $(\lambda_0)^{\circ a}$ and let $\Omega_0\subset\Omega_1$ be two $\gamma_1$-open subsets of $\BBV$ with
\eqn
&&\Omega_1\setminus\Omega_0\subset N,\\
&&\mbox{for any $x\in\Omega_1, (x+\gamma_1)\setminus\Omega_0$ is compact.}
\eneqn
Applying~\cite{KS90}*{Prop.~5.2.1}, we obtain the isomorphism
\eq\label{eq:SSoim2}
&&\rsect(\Omega_1\cap N;\roim{f}(F_{Z_n}))\isoto\rsect(\Omega_0\cap N;\roim{f}(F_{Z_n})).
\eneq
or equivalently, the isomorphism
\eq\label{eq:SSoim3}
&&\rsect\bl\opb{f}(\Omega_1\cap N)\cap Z_n;F\br\isoto\rsect\bl\opb{f}(\Omega_0\cap N)\cap Z_n;F\br.
\eneq
Consider a distinguished triangle
\eqn
&&\rsect_{\opb{f}(\Omega_1\cap N)}F\to\rsect_{\opb{f}(\Omega_0\cap N)}F\to G\to[+1]
\eneqn
Then $\rsect(Z_n;G)\simeq0$ and by the Grothendieck Mittag-Leffler theorem (see for example~\cite{KS90}*{Prop.~2.7.1~(iii)}), we get $\rsect(X;G)\simeq0$, whence the isomorphism
\eq\label{eq:SSoim5}
&&\rsect(\Omega_1\cap N;\roim{f}F)\isoto\rsect(\Omega_0\cap N;\roim{f}F).
\eneq
It follows from the definition of the microsupport that $p\notin\SSi(\roim{f}F)$.
\end{proof}

The next result is well-known from the specialists, but, to our knowledge, is not in the literature.
Consider the Cartesian square of real manifolds
\eq\label{eq:cartsq1}
&&\xymatrix{
Y\ar[r]^-j\ar@{}[rd]|-\square\ar[d]_-{g}&X\ar[d]^-f\\
N\ar[r]_-i&M.
}\eneq
Let $F\in\Derb(\cor_X)$.
One has a natural isomorphism~\cite{KS90}*{Prop.~3.1.9}:
\eq\label{eq:epbbasech}
&&\epb{i}\roim{f}F\simeq\roim{g}\epb{j}F.
\eneq
\begin{lemma}[Non-characteristic base change formula]\label{le:ncbasech}
Let $F\in\Derb(\cor_X)$.
Assume that $j$ is non-characteristic for $F$, $i$ is non-characteristic for $\roim{f}F$ and $f$ is submersive.
Then the isomorphism~\eqref{eq:epbbasech} induces the isomorphism $\opb{i}\roim{f}F\simeq\roim{g}\opb{j}F$.
\end{lemma}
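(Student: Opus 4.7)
The plan is to deduce the result from~\eqref{eq:epbbasech} by converting the two occurrences of $\epb{}$ into $\opb{}$ using the non-characteristic hypotheses, and then cancelling a common tensor factor. The crucial input is~\cite{KS90}*{Prop.~5.4.13}: for a morphism $h\cl M'\to M''$ non-characteristic for $G\in\Derb(\cor_{M''})$, one has a natural isomorphism $\opb{h}G\tens\omega_{M'/M''}\isoto\epb{h}G$, with $\omega_{M'/M''}=\epb{h}\cor_{M''}$. Applied to our hypotheses this yields
\begin{equation*}
\epb{i}\roim{f}F\simeq\opb{i}\roim{f}F\tens\omega_{N/M},\qquad \epb{j}F\simeq\opb{j}F\tens\omega_{Y/X}.
\end{equation*}

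Since $f$ is submersive, so is its base change $g$, and both $\omega_{X/M}=\epb{f}\cor_M$ and $\omega_{Y/N}=\epb{g}\cor_N$ are invertible complexes. The auxiliary step, which I expect to be the main technical point, is the identification
\begin{equation*}
\omega_{Y/X}\simeq\opb{g}\omega_{N/M}.
\end{equation*}
This follows from the tautology $\epb{j}\epb{f}\cor_M = \epb{g}\epb{i}\cor_M$ (since $fj=ig$) together with the submersion formula $\epb{f}G\simeq\opb{f}G\tens\omega_{X/M}$ (and its analogue for $g$), applied to $\cor_M$ and $\omega_{N/M}$; this produces an identity $\opb{j}\omega_{X/M}\tens\omega_{Y/X}\simeq\opb{g}\omega_{N/M}\tens\omega_{Y/N}$ in which the invertible factor $\opb{j}\omega_{X/M}\simeq\omega_{Y/N}$ (smooth base change of relative dualizing complexes for submersions) may be cancelled. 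One should note that submersions are non-characteristic for every object, so Prop.~5.4.13 applies freely to $f$ and $g$.

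Granted this identification, the remainder is a formal chain. Starting from~\eqref{eq:epbbasech} and combining the two conversion formulas, the auxiliary identity, and the projection formula (valid because $\omega_{N/M}$ is dualizable in $\Derb(\cor_N)$),
\begin{equation*}
\opb{i}\roim{f}F\tens\omega_{N/M}\simeq\epb{i}\roim{f}F\simeq\roim{g}\epb{j}F\simeq\roim{g}\bl\opb{j}F\tens\opb{g}\omega_{N/M}\br\simeq\roim{g}\opb{j}F\tens\omega_{N/M}.
\end{equation*}
Tensoring both sides with the inverse of $\omega_{N/M}$ produces the desired isomorphism. A final verification, which I expect to be routine once the diagram of natural transformations is unwound, is that the isomorphism so obtained coincides with the canonical morphism $\opb{i}\roim{f}F\to\roim{g}\opb{j}F$ induced by adjunction, so that the statement is intrinsic rather than depending on the chosen chain.
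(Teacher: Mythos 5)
Your proof is correct and follows essentially the same route as the paper's: convert both $\epb{i}$ and $\epb{j}$ into $\opb{i}\otimes\omega_{N/M}$ and $\opb{j}\otimes\omega_{Y/X}$ via KS~Prop.~5.4.13, identify $\omega_{Y/X}\simeq\opb{g}\omega_{N/M}$, apply the projection formula to $g$ using that $\omega_{N/M}$ is invertible, and cancel. The only place you diverge is the auxiliary identification $\epb{j}\cor_X\simeq\opb{g}\epb{i}\cor_M$: the paper simply asserts it from the submersivity of $f$ (it is a local computation on relative orientation sheaves), whereas you derive it from the tautology $\epb{j}\epb{f}\cor_M\simeq\epb{g}\epb{i}\cor_M$ after cancelling $\opb{j}\omega_{X/M}\simeq\omega_{Y/N}$. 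This is a valid alternative derivation, but note that the fact you cancel is itself a base change statement for relative dualizing complexes along the Cartesian square, of the same nature and difficulty as the one you are proving; given the displayed identity $\opb{j}\omega_{X/M}\tens\omega_{Y/X}\simeq\opb{g}\omega_{N/M}\tens\omega_{Y/N}$, the two are logically equivalent. So nothing is actually gained — you have traded one local orientation-sheaf computation for an equivalent one. Also, a small wording point: to produce the left-hand side $\opb{j}\omega_{X/M}\otimes\omega_{Y/X}$ you need the $\epb{}\!=\opb{}\!\otimes$ formula for the map $j$ acting on the local system $\omega_{X/M}$ (non-characteristic because the microsupport of a local system lies in the zero section), not the submersion formula for $f$; your parenthetical ``applied to $\cor_M$ and $\omega_{N/M}$'' glosses over this, though the conclusion you write is right.
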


\begin{proof}
Using~\cite{KS90}*{Prop.~5.4.13} and the hypotheses that $i$ and $j$ are non-characteristic, we get the isomorphisms
\eqn
\epb{i}\roim{f}F&\simeq&\opb{i}\roim{f}F\tens\epb{i}\cor_M,\\
\roim{g}\epb{j}F&\simeq& \roim{g}(\opb{j}F\tens \epb{j}\cor_X).
\eneqn
On the other hand, we have the isomorphisms
\eqn
\roim{g}(\opb{j}F\tens \epb{j}\cor_X)&\simeq&\roim{g}(\opb{j}F\tens\opb{g}\epb{i}\cor_M)\\
&\simeq&\roim{g}\opb{j}F\tens\epb{i}\cor_M.
\eneqn
The isomorphism $\epb{j}\cor_X\simeq \opb{g}(\epb{i}\cor_M)$ follows from the hypothesis that $f$ is submersive.
The last isomorphism follows since locally on $N$, $\epb{i}\cor_M$ is free of finite rank (up to a shift).
Using~\eqref{eq:cartsq1}, we get the the isomorphism
\eqn
&&\opb{i}\roim{f}F\tens \epb{i}\cor_M\simeq\roim{g}\opb{j}F\tens \epb{i}\cor_M.
\eneqn
The result follows since $\epb{i}\cor_M$ is invertible.
\end{proof}

\subsection{Propagation and Cauchy problem}\label{subsect:propagcausal}

Recall Notation~\ref{not:lambda} in which  we set $\lambda=\gamma^\circ$.

\begin{proposition}\label{pro:SSgamcl}
Let $(M,\gamma)$ be a causal manifold.
Let $Z,U\subset M$.
Assume that $U$ is open and is a $\gamma$-set and that $Z$ is closed and is a $\gamma^a$-set.
Then $\SSi(\cor_U)\subset\lambda^{a}$ and $\SSi(\cor_Z)\subset\lambda^{a}$.
\end{proposition}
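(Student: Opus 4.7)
The proof reduces to combining Proposition~\ref{pro:KS538} with polar-cone manipulations, so the main task is bookkeeping of polars and the antipodal map; there is no real obstacle.

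The plan is to handle the two assertions in parallel. Recall that by definition of a $\gamma$-set (Definition~\ref{def:gammaset}), $U$ being a $\gamma$-set means $\gamma \subset N(U)$, and $Z$ being a $\gamma^a$-set means $\gamma^a \subset N(Z)$. Taking polars, the first inclusion gives
\eqn
&& N(U)^\circ \subset \gamma^\circ = \lambda,
\eneqn
and the second gives $N(Z)^\circ \subset (\gamma^a)^\circ = \lambda^a$, where the equality $(\gamma^a)^\circ = \lambda^a$ is a straightforward verification from the definitions~\eqref{eq:dualcone} and~\eqref{eq:opcone}.

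Next I would invoke Proposition~\ref{pro:KS538}: since $U$ is open, $\SSi(\cor_U) \subset N(U)^{\circ a}$; applying the antipodal map to the inclusion $N(U)^\circ \subset \lambda$ gives $N(U)^{\circ a} \subset \lambda^a$, whence
\eqn
&& \SSi(\cor_U) \subset \lambda^a.
\eneqn
Similarly, since $Z$ is closed, $\SSi(\cor_Z) \subset N(Z)^\circ \subset \lambda^a$, which is the second assertion.

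No step is delicate; the only point to be careful about is the interaction of the polar and the antipodal map, namely $(\gamma^a)^\circ = (\gamma^\circ)^a = \lambda^a$, which is immediate since $\langle \xi, -v\rangle \geq 0$ for all $v \in \gamma$ is equivalent to $-\xi \in \gamma^\circ$.
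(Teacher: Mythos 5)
Your proof is correct and follows exactly the paper's argument: apply Proposition~\ref{pro:KS538}, then use the inclusion $\gamma\subset N(U)$ (resp.\ $\gamma^a\subset N(Z)$) and contravariance of the polar cone to conclude. The only difference is that the paper leaves the case of $Z$ to the reader with ``the same proof applies,'' whereas you spell it out; the commutation $(\gamma^a)^\circ=(\gamma^\circ)^a$ that you check is indeed the small point being implicitly used.
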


\begin{proof}
By hypothesis, $\gamma\subset N(U)$.
Hence, $\SSi(\cor_U)\subset N(U)^{\circ a}\subset\lambda^{a}$ by Proposition~\ref{pro:KS538}.

\spa
The same proof applies to $Z$.
\end{proof}

\begin{corollary}\label{cor:SSgamclbis}
Let $(M,\gamma)$ be a causal manifold and $\preceq$ be a closed causal preorder on $M$.
Let $Z,U\subset M$.
Assume that $U$ is open and $U=\futo{U}$ and assume that $Z$ is closed and $Z=\paso{Z}$.
Then $\SSi(\cor_U)\subset\lambda^{a}$ and $\SSi(\cor_Z)\subset\lambda^{a}$.
\end{corollary}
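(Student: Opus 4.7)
The plan is to derive this corollary directly from Proposition~\ref{pro:SSgamcl} by showing that the hypotheses on $U$ and $Z$ stated in terms of the causal preorder $\preceq$ imply that $U$ is a $\gamma$-set and $Z$ is a $\gamma^a$-set. Once that reduction is made, Proposition~\ref{pro:SSgamcl} immediately yields the microsupport bounds.

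For the set $U$, the hypothesis $U = \futo{U}$ means $U = \bigcup_{x \in U} \futo{x}$. Since $\preceq$ is assumed to be a causal preorder, Theorem~\ref{th:gDel1}~(ii) says that $\futo{x}$ is a $\gamma$-set for every $x \in M$. Proposition~\ref{pro:gammaset}~(i) then allows me to conclude that the union $U$ is itself a $\gamma$-set. Being open and a $\gamma$-set, $U$ satisfies the hypotheses of Proposition~\ref{pro:SSgamcl}, which gives $\SSi(\cor_U) \subset \lambda^a$.

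For the set $Z$, I would run the dual argument. The hypothesis $Z = \paso{Z}$ gives $Z = \bigcup_{y \in Z} \paso{y}$. Since $\preceq$ is causal, Theorem~\ref{th:gDel1}~(iii) says that $\paso{y}$ is a $\gamma^a$-set for every $y \in M$, and again by Proposition~\ref{pro:gammaset}~(i) the union $Z$ is a $\gamma^a$-set. Being closed and a $\gamma^a$-set, $Z$ satisfies the second part of Proposition~\ref{pro:SSgamcl}, giving $\SSi(\cor_Z) \subset \lambda^a$.

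There is essentially no obstacle: the corollary is a clean bookkeeping consequence of Theorem~\ref{th:gDel1} (translating the preorder-theoretic condition $U = \futo{U}$ into the geometric condition of being a $\gamma$-set), the union-stability of the family of $\gamma$-sets from Proposition~\ref{pro:gammaset}, and Proposition~\ref{pro:SSgamcl}. The closedness of $\preceq$ is not actually needed for this argument in its stated form; only causality of $\preceq$ is used.
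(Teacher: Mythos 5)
Your proof is correct and follows essentially the same route as the paper's (which simply cites Proposition~\ref{pro:SSgamcl} and Theorem~\ref{th:gDel1}~(i)$\Rightarrow$(ii)); you merely spell out the implicit union step via Proposition~\ref{pro:gammaset}~(i). Your closing observation that only causality, not closedness, of $\preceq$ is used here is also accurate; the closedness hypothesis is carried along because it is needed where the corollary is applied (to ensure sets like $\paso{K}$ are closed, cf.\ Proposition~\ref{pro:proper}).
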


\begin{proof}
Apply Proposition~\ref{pro:SSgamcl} and Theorem~\ref{th:gDel1}, (i)$\Rightarrow$(ii).
\end{proof}

Recall that a morphism of manifolds $f\cl M\to N$ gives rise to the maps
\eqn
&&\xymatrix{
T^*M&\ar[l]_-{f_d}M\times_N T^*N\ar[r]^-{f_\pi}&T^*N.
}\eneqn

\begin{theorem}\label{th:GP1}
Let $f\cl (M,\gamma_M)\to (N,\gamma_N)$ be a morphism of causal manifolds, let $\preceq$ be a closed causal preorder on $M$ and let $F\in\Derb(\cor_M)$.
Assume that
\banum
\item
$f\cl M\to N$ is submersive,
\item
for any compact $K\subset M$, the map $f$ is proper on the closed set $\paso{K}$,
\item
$\SSi(F)\cap\lambda_M\subset T^*_MM$.
\eanum
Then
\eq\label{eq:ssoimtimF2}
&&\SSi(\roim{f}F)\cap\Int(\lambda_N)=\varnothing.
\eneq
\end{theorem}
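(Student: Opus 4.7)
My plan is to use the non-proper exhaustion technique of Lemma~\ref{le:ssioimf} to reduce to a proper situation, and then combine the microsupport bounds of Theorems~\ref{th:oper1} and~\ref{th:oper2} with a direct cone computation exploiting the causality of $f$ (via Proposition~\ref{pro:causalmor1}), the submersion hypothesis, and hypothesis~(c).

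For the exhaustion, I would take a compact exhaustion $\{K_n\}_{n\in\N}$ of $M$ with $K_n\subset\Int(K_{n+1})$ and set $Z_n=\pas{K_n}$. Each $Z_n$ is closed by Proposition~\ref{pro:proper}(i); it is a $\gamma_M^a$-set, since by Theorem~\ref{th:gDel1} each $\pas{x}$ is a $\gamma_M^a$-set and unions of $\gamma_M^a$-sets are $\gamma_M^a$-sets by Proposition~\ref{pro:gammaset}(i); the map $f$ is proper on $Z_n$ by hypothesis~(b); and clearly $\bigcup_n Z_n=M$. The further condition $Z_n\subset\Int(Z_{n+1})$ required by Lemma~\ref{le:ssioimf} can be arranged by leaving enough slack between successive $K_n$'s, using that $\pasai{V}$ is open for every open $V$ (a consequence of Proposition~\ref{pro:gammaset}(ii) applied to the smallest $\gamma_M^a$-set containing~$V$).

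Once the exhaustion is in place, the microsupport of each $\roim{f}(F_{Z_n})$ is bounded as follows. Since $Z_n$ is closed and a $\gamma_M^a$-set, Proposition~\ref{pro:SSgamcl} gives $\SSi(\cor_{Z_n})\subset\lambda_M^a$; hypothesis~(c) then yields $\SSi(F)\cap\SSi(\cor_{Z_n})^a\subset\SSi(F)\cap\lambda_M\subset T^*_MM$, so Theorem~\ref{th:oper1}(i) gives $\SSi(F_{Z_n})\subset\SSi(F)+\lambda_M^a$. Since $f$ is proper on $\supp(F_{Z_n})\subset Z_n$, Theorem~\ref{th:oper2}(i) gives $\SSi(\roim{f}(F_{Z_n}))\subset f_\pi\opb{f_d}(\SSi(F)+\lambda_M^a)$. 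To see that this set avoids $\Int(\lambda_N)$, take $(y,\xi)\in\Int(\lambda_N)$; since $\gamma_{N,y}\neq\varnothing$, one has $\xi\neq 0$. Assume some $x\in f^{-1}(y)$ admits a decomposition $\eta={}^t(T_xf)(\xi)=\eta_1+\eta_2$ with $(x,\eta_1)\in\SSi(F)$ and $\eta_2\in\lambda_{M,x}^a$. Proposition~\ref{pro:causalmor1} gives $\eta\in\lambda_{M,x}$, hence $\eta_1=\eta-\eta_2$ lies in the convex cone $\lambda_{M,x}$; hypothesis~(c) forces $\eta_1=0$, so $\eta=\eta_2\in\lambda_{M,x}\cap\lambda_{M,x}^a=\{0\}$ by properness of $\lambda_M$ (Lemma~\ref{le:lambdaclosed}). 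Then ${}^t(T_xf)\xi=0$, and the injectivity of ${}^t(T_xf)$ (since $f$ is submersive) forces $\xi=0$, a contradiction.

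Applying Lemma~\ref{le:ssioimf} and using that $\Int(\lambda_N)$ is open, one concludes that $\SSi(\roim{f}F)\cap\Int(\lambda_N)=\varnothing$. The main technical obstacle is the exhaustion step, specifically ensuring $Z_n\subset\Int(Z_{n+1})$ for a suitable choice of $K_n$; the cone calculation is elementary once the bounds from Theorems~\ref{th:oper1} and~\ref{th:oper2} are in place.
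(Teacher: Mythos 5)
Your cone computation is correct and is, modulo bookkeeping, the same microsupport bound as the paper's; the overall strategy (reduce to the proper case by cutting by past sets $Z_n=\paso{K_n}$, bound $\SSi(F_{Z_n})$ and $\SSi(\roim{f}F_{Z_n})$, then invoke Lemma~\ref{le:ssioimf}) also matches. The genuine gap is in the exhaustion step, which you yourself flag as ``the main technical obstacle'' but then do not resolve. You assert that one can choose compacts $K_n$ so that $\paso{K_n}\subset\Int(\paso{K_{n+1}})$ \emph{globally on $M$}, invoking only that $\pasai{V}$ is open for $V$ open. This does not suffice: the sets $\paso{K_n}$ are in general noncompact closed subsets of $M$, and to ``push'' $\paso{K_n}$ into the interior of $\paso{K_{n+1}}$ one wants, for each $x\in K_n$, a neighborhood $U_x$ and a point $y_x$ with $\paso{x}\subset\Int(\paso{y_x})$; but even when $U_x\subset\pasai{y_x}$, it does not follow that $\paso{x}$ (which can reach out to infinity) lies in $\Int(\paso{y_x})$, since the two closed sets may share boundary points far from $K_n$. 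The openness of $\pasai{V}$ controls behaviour near $K_n$, not near the unbounded ends of $\paso{K_n}$.

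The paper avoids this by first fixing an open \emph{relatively compact} $W\subset N$ and working over $V=\opb{f}(W)$. The properness hypothesis~(b) then makes $\paso{K_n}\cap\opb{f}(\ol W)$ \emph{compact}, and the inductive construction of $K_{n+1}$ (a finite cover of $\ol{\paso{K_n}\cap V}$ by neighborhoods $V_x\subset\pasai{y_x}\setminus\{y_x\}$, as in the proof of Proposition~\ref{pro:point}) only has to achieve $\paso{K_n}\cap V\subset\Int(\paso{K_{n+1}})\cap V$, which compactness makes possible. Lemma~\ref{le:ssioimf} is then applied to $f_V\cl V\to W$, giving the bound on $\SSi(\roim{f}F)$ over $T^*W$, and letting $W$ vary gives the global statement. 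To repair your proof, replace the global exhaustion by this localized one: this is not a cosmetic change, it is precisely where the hypothesis~(b) (properness of $f$ on pasts) enters, and it is the only place where the relative-compactness gives you the compactness needed to run the finite-cover argument.
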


\begin{proof}
(i)
Let $K$ be a compact subset of $M$ and let $Z = \paso{K}$.
Then $Z$ is closed  by Proposition~\ref{pro:proper}, $Z=\paso{Z}$ and $\SSi(\cor_{Z})\subset \lambda_M^{a}$ by Corollary~\ref{cor:SSgamclbis}.
Since the cone $\lambda_M$ is closed convex and proper, 
we obtain by  applying Proposition~\ref{th:oper1}~(i)
\eq\label{eq:ssiFn}
&&\SSi(F_Z)\cap\lambda_M\subset T^*_MM.
\eneq
Since $f$ is submersive (hence $f_d$ is injective), we deduce from~\eqref{eq:ssiFn}
\eqn
&&\opb{f_d}(\SSi(F_Z))\cap\opb{f_d}\lambda_M\subset M\times_NT^*_NN
\eneqn
and using the fact that $f$ is causal
\eqn
&&\opb{f_d}(\SSi(F_Z))\cap\opb{f_\pi}\lambda_N\subset M\times_NT^*_NN.
\eneqn
Since $f$ is proper on $\supp(F_Z)$, we get:
\eq\label{eq:ssiFn2}
&&\SSi(\roim{f}F_Z)\cap\lambda_N\subset f_\pi\opb{f_d}(\SSi(F_Z))\cap\lambda_N\subset T^*_NN.
\eneq

\spa
(ii)
Let $W$ be an open relatively compact subset of $N$ and set $V=\opb{f}W$, $f_V=f\vert_V$.
Since $f$ is proper on the sets $\paso{K}$ ($K$ compact in $M$), we may construct by induction an exhaustive sequence $\{K_n\}_n\in \N$ of compact subsets of $M$ such that
\eqn
&&\paso{K_n}\cap V\subset \Int (\paso{K_{n+1}}) \cap V.
\eneqn
Set $Z_n=\paso{K_n}$ and $F_n=F_{Z_n}$.
By~\eqref{eq:ssiFn2}
\eqn
&&\SSi(\roim{f}F_n)\cap T^*W\cap\Int(\lambda_N)=\varnothing.
\eneqn
Applying  Lemma~\ref{le:ssioimf} to the map $f_V\cl V\to W$, we get:
\eq\label{eq:ssoimtimF3}
&&\SSi(\roim{f}F)\cap T^*W\cap\Int(\lambda_N)=\varnothing.
\eneq
Since this result holds for any $W$ open relatively compact, the proof is complete.
\end{proof}

\begin{corollary}\label{cor:GP2}
Let $I$ be a finite set and let $\{\gamma_{M,i}\}_{i\in I}$ and $\{\gamma_{N,i}\}_{i\in I}$ be two families of open convex cones in $TM$ and $TN$ respectively.
Let $f\cl M\to N$ be a morphism of manifolds which defines morphisms of causal manifolds $f\cl(M,\gamma_{M,i})\to(N,\gamma_{N,i})$.
For each $i\in I$ let $\preceq_i$ be a closed preorder on $M$, causal for $\gamma_i$.
Let $F\in\Derb(\cor_M)$.
Assume
\banum
\item
$f$ is submersive,
\item
for any compact $K\subset M$ and all $i\in I$, the map $f$ is proper on $\pasii{K}$,
\item
$\SSi(F)\cap\lambda_{M,i}\subset T^*_MM$ for all $i\in I$,
\item
$\bigcup_i\Int(\lambda_{N,i})=T^*N\setminus T^*_NN$.
\eanum
Then
\eq\label{eq:ssoimtimF4}
&&\SSi(\roim{f}F)\subset T^*_NN.
\eneq
In other words, $\roim{f}F$ is a local system \lp in the derived sense\rp.
Moreover, $f$ is surjective.
\end{corollary}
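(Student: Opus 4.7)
The plan is to reduce Corollary~\ref{cor:GP2} to Theorem~\ref{th:GP1} index by index and then aggregate using hypothesis (d). First, I will fix $i \in I$ and check that the triple $(f,\preceq_i,F)$ satisfies the three hypotheses of Theorem~\ref{th:GP1}: item (a) here is exactly the submersivity hypothesis there, item (b) here gives properness of $f$ on $\pasii{K}$ for every compact $K$ (and $\preceq_i$ is closed and causal for $\gamma_{M,i}$ by assumption), and item (c) gives the microsupport bound $\SSi(F)\cap\lambda_{M,i}\subset T^*_MM$. The theorem then yields
\eq\label{eq:sketch1}
&&\SSi(\roim{f}F)\cap\Int(\lambda_{N,i})=\varnothing.
\eneq

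Second, since $I$ is finite I will simply take the union over $i\in I$ in~\eqref{eq:sketch1}:
\eqn
\SSi(\roim{f}F)\cap \bigcup_{i\in I}\Int(\lambda_{N,i})=\varnothing.
\eneqn
By hypothesis (d), the left-hand cone equals $T^*N\setminus T^*_NN$, so $\SSi(\roim{f}F)\subset T^*_NN$, which is exactly the local system condition. This is the ``main'' content of the statement, and it is essentially formal given Theorem~\ref{th:GP1}; I do not expect any genuine obstacle here.

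Third, for the surjectivity of $f$, I would apply what has just been proved to the particular sheaf $F=\cor_M$; hypothesis (c) is vacuous since $\SSi(\cor_M)=T^*_MM$, and (a)--(b) are independent of $F$. Thus $\roim{f}\cor_M$ is a local system on the connected manifold $N$. Since $f$ is a submersion, for any $x\in M$ a small enough neighborhood $V$ of $f(x)$ has $f^{-1}(V)$ containing a trivializing chart, so $H^0\bl f^{-1}(V);\cor\br\neq 0$; hence the stalk $(\roim{f}\cor_M)_{f(x)}$ is nonzero. By local constancy and connectedness of $N$, every stalk of $\roim{f}\cor_M$ is nonzero. But for $y\notin\ol{f(M)}$ one would have $f^{-1}(V)=\varnothing$ for some $V\ni y$, giving a zero stalk. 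So $\ol{f(M)}=N$; combined with the openness of $f(M)$ (submersion), I expect this to yield $f(M)=N$ via a standard connectedness argument.

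The only step requiring any care is the last, since passing from ``$f(M)$ open and dense'' to ``$f(M)=N$'' is not automatic in general; the causal hypothesis~(b) (properness on $\pasii{K}$) should be what pins the closure of $f(M)$ down, by showing that sequences $x_n\in M$ with $f(x_n)\to y\in\partial f(M)$ cannot escape to infinity. I expect this closedness check to be the only non-formal step in the argument.
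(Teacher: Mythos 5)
Your reduction of the microsupport bound to Theorem~\ref{th:GP1} (applying it for each fixed $i$ and then taking the finite union over $I$, and closing with hypothesis~(d)) is exactly the paper's argument; nothing to add on that part.

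For the surjectivity, your argument up through density is also correct and essentially matches the paper's terse justification: with $F=\cor_M$, the direct image $\roim{f}\cor_M$ is a (derived) local system on the connected manifold $N$, its stalk at $f(x_0)$ is nonzero because the constant section $1$ of $H^0\bl \opb{f}(V);\cor\br$ survives in the direct limit, hence by local constancy and connectedness every stalk is nonzero and $\ol{f(M)}=N$. However, your proposed finish --- using properness of $f$ on $\pasii{K}$ to rule out sequences $x_n$ with $f(x_n)\to y\in\partial f(M)$ escaping to infinity --- does not go through as stated: there is no reason for such a sequence to lie in $\pasii{K}$ for a single compact $K$, so hypothesis~(b) cannot be invoked on it. The step from density to surjectivity is genuinely missing. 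The completion that is consistent with the tools already developed in the paper uses Lemma~\ref{le:ncbasech} (non-characteristic base change): for any $a\in N$, the inclusion $\{a\}\hookrightarrow N$ is non-characteristic for $\roim{f}\cor_M$ (its microsupport lies in $T^*_NN$), the inclusion $\opb{f}(\{a\})\hookrightarrow M$ is trivially non-characteristic for $\cor_M$, and $f$ is submersive, so
\eqn
&&(\roim{f}\cor_M)_a\;\simeq\;\rsect\bl\opb{f}(\{a\});\cor\br,
\eneqn
which vanishes if $\opb{f}(\{a\})=\varnothing$. Since all stalks are nonzero, every fiber is nonempty, i.e., $f$ is surjective. (This is essentially the localized form of the argument proving Corollary~\ref{cor:GP3}.) To be fair, the paper's own proof of the corollary is also extremely brief on this point, but the base-change route is what makes it rigorous; the escaping-sequence route you sketched would not.
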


\begin{proof}
The inclusion~\eqref{eq:ssoimtimF4} follows from Theorem~\ref{th:GP1}.
Since $M$ is nonempty, $\roim{f}\cor_M$ is a non-zero local system on $N$.
Since $N$ is connected, the result follows.
\end{proof}

\begin{corollary}\label{cor:GP3}
We make the same hypotheses as in Corollary~\ref{cor:GP2} and we assume moreover that $N$ is contractible.
For $a\in N$, set $M_a=\opb{f}(\{a\})$.
Then the restriction morphism
\eq
&&\rsect(M;F)\to\rsect(M_a;F\vert_{M_a})
\eneq
is an isomorphism.
\end{corollary}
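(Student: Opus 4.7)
The plan is to split the restriction $\rsect(M;F)\to\rsect(M_a;F\vert_{M_a})$ as a composition of two isomorphisms of different natures. On the one hand, Corollary~\ref{cor:GP2} already gives $\SSi(\roim{f}F)\subset T^*_NN$, so $\roim{f}F$ is a local system on $N$ in the derived sense. Since $N$ is contractible, it is in particular simply connected, so any such object is globally trivialized by its stalk at $a$; that is, the natural morphism $\rsect(N;\roim{f}F)\to(\roim{f}F)_a$ is an isomorphism. Combined with $\rsect(M;F)\simeq\rsect(N;\roim{f}F)$, this yields $\rsect(M;F)\isoto(\roim{f}F)_a$.

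For the second half, I would apply the non-characteristic base change formula (Lemma~\ref{le:ncbasech}) to the Cartesian square
\[
\xymatrix{
M_a\ar[r]^-{j}\ar[d]_-{g}\ar@{}[rd]|-\square&M\ar[d]^-{f}\\
\{a\}\ar[r]_-{i_a}&N.
}
\]
The map $f$ is submersive by hypothesis~(a). The closed embedding $i_a$ is trivially non-characteristic for $\roim{f}F$ since its microsupport lies in the zero-section. Once the remaining condition that $j$ is non-characteristic for $F$ is verified, Lemma~\ref{le:ncbasech} yields $\opb{i_a}\roim{f}F\simeq\roim{g}\opb{j}F$. The left-hand side is $(\roim{f}F)_a$ and the right-hand side is $\rsect(M_a;F\vert_{M_a})$ since $g$ maps to a point. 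Composing with the first isomorphism then identifies the restriction morphism of the statement with an isomorphism.

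The main obstacle is the verification that $j$ is non-characteristic for $F$, i.e.\ that $T^*_{M_a}M\cap\SSi(F)\subset T^*_MM$. Since $f$ is submersive, the conormal identifies as $T^*_{M_a}M=f_d(M_a\times_NT^*_aN)$. By hypothesis~(d) of Corollary~\ref{cor:GP2}, every nonzero covector in $T^*_aN$ lies in $\Int(\lambda_{N,i})$ for some $i\in I$. Since each $f\cl(M,\gamma_{M,i})\to(N,\gamma_{N,i})$ is a causal morphism, Proposition~\ref{pro:causalmor1} gives $f_d(M\times_N\lambda_{N,i})\subset\lambda_{M,i}$. It follows that $T^*_{M_a}M\subset\bigcup_i\lambda_{M,i}$, and hypothesis~(c) then forces $T^*_{M_a}M\cap\SSi(F)\subset T^*_MM$, as required.
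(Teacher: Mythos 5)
Your proof follows the paper's argument exactly: reduce to $\SSi(\roim{f}F)\subset T^*_NN$ via Corollary~\ref{cor:GP2}, use contractibility of $N$ to get $\rsect(N;\roim{f}F)\simeq(\roim{f}F)_a$, and then apply the non-characteristic base change (Lemma~\ref{le:ncbasech}) to the Cartesian square. The only difference is that you spell out the verification that $j\cl M_a\into M$ is non-characteristic for $F$ (via Proposition~\ref{pro:causalmor1} and hypotheses (c), (d)), a step the paper states without detail; your verification is correct.
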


\begin{proof}
It follows from Corollary~\ref{cor:GP2} that $\SSi(\roim{f}F)\subset T^*_NN$.
In other words, all cohomology objects of $\roim{f}F$ are local systems on $N$.
This last manifold being contractible, we get:
\eq\label{eq:oimtim1}
&&\rsect(N;\roim{f}F)\simeq(\roim{f}F)_a.
\eneq
Consider the Cartesian square
\eqn
&&\xymatrix{
M_a\ar[r]^-\iota\ar@{}[rd]|-\square\ar[d]_-{f_a}&M\ar[d]^-f\\
\{a\}\ar[r]_-j&N.
}\eneqn
It follows from the hypotheses that $\iota$ is non-characteristic for $F$ and it follows from Corollary~\ref{cor:GP2} that $j$ is non-characteristic for $\roim{f}F$.
Applying Lemma~\ref{le:ncbasech}, we get
\eq\label{eq:oimtim2}
&&\opb{j}\roim{f}F\simeq\roim{f_a}\opb{\iota}F.
\eneq
By using~\eqref{eq:oimtim1}, we get
\eqn
&&\rsect(M;F)\simeq \rsect(N;\roim{f}F)\simeq (\roim{f}F)_a\simeq\opb{j}\roim{f}F\simeq\roim{f_a}\opb{\iota}F\simeq\rsect(M_a;F\vert_{M_a}).
\eneqn
\end{proof}

\subsection{Sheaves on G-causal manifolds}\label{subsect:ShvGcausal}

We shall particularize the results of Subsection~\ref{subsect:propagcausal} to the case where $N = \R$, the interesting case in practice.

In the sequel, we denote by $t$ a coordinate on $\R$ and by $(t;\tau)$ the associated coordinates on $T^*\R$.
We shall write for short $\{\tau\geq0\}$ instead of $\{(t;\tau)\in T^*\R;\tau\geq0\}$ and similarly with $\tau\leq0$.

\begin{proposition}\label{pro:GP1}
Let $(M,\gamma,\tim)$ be a G-causal manifold and let $F\in\Derb(\cor_M)$.
Assume that $\SSi(F)\cap\lambda\subset T^*_MM$.
Then
\eq\label{eq:ssoimtimF1}
&&\SSi(\roim{\tim}F)\subset\{\tau\leq0\}.
\eneq
\end{proposition}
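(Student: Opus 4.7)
The plan is to apply Theorem~\ref{th:GP1} directly, with the morphism $f$ taken to be the Cauchy time function $\tim \colon (M,\gamma) \to (\R,+)$ and with the closed causal preorder $\preceq$ taken to be the cc-preorder $\preceqccc$.

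First, I would identify the target data. Since $\gamma_N = \R \times \R_{>0}$ for $N = (\R,+)$, one computes $\lambda_N = \gamma_N^\circ = \{\tau \geq 0\}$, and in particular $\Int(\lambda_N) = \{\tau > 0\}$. Thus the conclusion of Theorem~\ref{th:GP1} in this setting reads
\eq
\SSi(\roim{\tim}F) \cap \{\tau > 0\} = \varnothing,
\eneq
which is precisely the bound $\SSi(\roim{\tim}F) \subset \{\tau \leq 0\}$ we need.

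Second, I would verify the three hypotheses of Theorem~\ref{th:GP1}. Hypothesis~(a), that $\tim$ is submersive, is built into Definition~\ref{def:Gcausal} of a Cauchy time function. Hypothesis~(c), that $\SSi(F) \cap \lambda_M \subset T^*_MM$, is exactly the assumption of the proposition. For hypothesis~(b), one takes $\preceq$ to be the cc-preorder $\preceqccc$ on $M$: this is a closed causal preorder by construction (Definition~\ref{def:causalorder}), and the Cauchy time function is by definition proper on $\pasccc{K}$ for every compact $K \subset M$. Thus $\tim$ is proper on the relevant past sets.

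Applying Theorem~\ref{th:GP1} with these choices then yields the desired inclusion. There is essentially no obstacle: the proposition is the specialization of the general theorem to $N=\R$, and the only content is the bookkeeping of the polar cones and the observation that the cc-preorder is the natural closed causal preorder on any G-causal manifold for which the properness hypothesis~(b) follows immediately from Definition~\ref{def:Gcausal}(a). If anything, the one point worth a line of justification is the computation $\lambda_N = \{\tau \geq 0\}$ (hence $\Int(\lambda_N) = \{\tau > 0\}$), which translates the abstract conclusion of Theorem~\ref{th:GP1} into the concrete half-space bound stated here.
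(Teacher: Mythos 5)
Your proof is correct and follows essentially the same route as the paper, which also applies Theorem~\ref{th:GP1} with $N=\R$, $f=\tim$ and reads off $\lambda_N=\{\tau\geq0\}$. The only difference is that you spell out explicitly the choice of the cc-preorder and the verification of hypotheses~(a)--(c), which the paper leaves implicit.
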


\begin{proof}
Apply Theorem~\ref{th:GP1} with $N=\R$, $f=\tim$, $\gamma_N=\{\tau\geq0\}$.
\end{proof}

\begin{theorem}\label{th:GglobalCP1}
Let $(M,\gamma,\tim)$ be a G-causal manifold and let $F\in\Derb(\cor_M)$.
\bnum
\item
Assume that $\SSi(F) \cap \lambda^{a} \subset T^*_MM$ and let $B$ be a closed subset satisfying $B = \paso{B}$ and $B \subset \opb{\tim}((-\infty,a])$ for some $a \in \R$.
Then
\eq\label{eq:sectAF0}
&&\rsect_B(M;F)\simeq0.
\eneq
\item
Assume that $\SSi(F)\cap(\lambda\cup\lambda^{a})\subset T^*_MM$.
Then, setting $M_0 = \opb{\tim}(0)$, the natural restriction morphism below is an isomorphism:
\eq\label{eq:GglobalCP}
&&\rsect(M;F)\isoto\rsect(M_0;F\vert_{M_0}).
\eneq
\enum
\end{theorem}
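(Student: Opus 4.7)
Part (ii) is an immediate application of Corollary~\ref{cor:GP3} with $N = \R$ (contractible), $f = \tim$, $a = 0$, and the two pairs of cones $(\gamma_{M,1}, \gamma_{N,1}) = (\gamma, \R_{>0})$ and $(\gamma_{M,2}, \gamma_{N,2}) = (\gamma^a, \R_{<0})$, equipped respectively with $\preceq_1 \,=\, \preceqccc$ and $\preceq_2$ the opposite preorder. Properness of $\tim$ on both $\pasccc{K}$ and $\futccc{K}$ is part of the Cauchy condition; the microsupport hypothesis (c) matches our assumption $\SSi(F)\cap(\lambda \cup \lambda^a) \subset T^*_MM$; and $\Int\lambda_{N,1} \cup \Int\lambda_{N,2} = \{\tau > 0\} \cup \{\tau < 0\} = T^*\R \setminus T^*_\R\R$ gives (d). Corollary~\ref{cor:GP3} then delivers~\eqref{eq:GglobalCP}.

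For part (i), set $G \seteq \rsect_B F \simeq \rhom(\cor_B, F)$ and $\mathcal H \seteq \roim{\tim} G$, so that $\rsect_B(M; F) \simeq \rsect(\R; \mathcal H)$; the goal becomes $\rsect(\R; \mathcal H) = 0$. Since $B$ is closed and $B = \paso{B}$, Corollary~\ref{cor:SSgamclbis} gives $\SSi(\cor_B) \subset \lambda^a$, and combined with the hypothesis $\SSi(F) \cap \lambda^a \subset T^*_MM$ this supplies the non-characteristic condition for Theorem~\ref{th:oper1}(ii), yielding $\SSi(G) \subset \SSi(\cor_B)^a + \SSi(F) \subset \lambda + \SSi(F)$. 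The principal technical step is a fiberwise decomposition showing $\SSi(G) \cap \lambda^a \subset T^*_MM$: for $p = q + r \in \SSi(G) \cap \lambda^a$ with $q \in \lambda$ and $r \in \SSi(F)$, one has $r = p - q \in \lambda^a + \lambda^a = \lambda^a$, so $r \in \SSi(F) \cap \lambda^a \subset T^*_MM$, forcing $p = q \in \lambda \cap \lambda^a = T^*_MM$ by the properness of the cone $\lambda$ (Lemma~\ref{le:lambdaclosed}).

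Proposition~\ref{pro:GP1} applied to the G-causal manifold $(M, \gamma^a, -\tim)$ (which is G-causal because $-\tim$ is causal for $\gamma^a$ and is proper on $\futccc{K}$, the past sets for the opposite cc-preorder) now gives $\SSi(\roim{(-\tim)} G) \subset \{\tau \leq 0\}$, equivalently $\SSi(\mathcal H) \subset \{\tau \geq 0\}$ via the involution $t \mapsto -t$ of $\R$. The support bound $\supp \mathcal H \subset (-\infty, a]$ is immediate from $\supp G \subset B \subset \opb{\tim}((-\infty, a])$. The proof concludes by applying~\cite{KS90}*{Prop.~5.2.1} on $\R$ with the closed proper convex cone $\gamma_1 = \R_{\geq 0}$ and the $\gamma_1$-open sets $\Omega_1 = \R$, $\Omega_0 = (c, \infty)$ for any $c > a$; the required compactness of $(x + \R_{\geq 0}) \setminus (c, \infty)$ is immediate, and the resulting isomorphism $\rsect(\R; \mathcal H) \isoto \rsect((c, \infty); \mathcal H) = 0$ finishes the argument.
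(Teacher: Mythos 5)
Your proof is correct and follows essentially the same route as the paper's: for part~(i), bound $\SSi(\rsect_BF)$ via Corollary~\ref{cor:SSgamclbis} and Theorem~\ref{th:oper1}(ii), deduce $\SSi(\rsect_BF)\cap\lambda^a\subset T^*_MM$, apply Proposition~\ref{pro:GP1} for the antipodal cone to get $\SSi(\roim{\tim}\rsect_BF)\subset\{\tau\geq0\}$, and finish by combining with the support bound; for part~(ii), apply Corollary~\ref{cor:GP3}. You supply two details the paper leaves implicit (the fiberwise cone decomposition showing $\SSi(\rsect_BF)\cap\lambda^a\subset T^*_MM$, and the explicit invocation of~\cite{KS90}*{Prop.~5.2.1} for the last vanishing), both of which are accurate.
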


\begin{proof}
(i)
By Corollary~\ref{cor:SSgamclbis}, $\SSi(\cor_{B})\subset \lambda^{a}$.
Applying Proposition~\ref{th:oper1}~(ii), we get
\eqn
&&\SSi(\rsect_B(F))\subset \SSi(F)+\lambda.
\eneqn
Therefore, $\SSi(\rsect_BF) \cap \lambda^{a}\subset T^*_MM$ and we may apply Proposition~\ref{pro:GP1} to this sheaf (with $\lambda^{a}$ instead of $\lambda$).
We obtain
\eqn
&&\SSi(\roim{\tim}\rsect_BF) \subset\{\tau\geq0\}.
\eneqn
Since $\supp(\rsect_BF) \subset (-\infty,a]$ for some $a \in \R$, this implies
\eqn
&&\rsect_B(M;F)\simeq\rsect(\R;\roim{\tim}\rsect_BF)\simeq0.
\eneqn

\spa
(ii)
Apply Corollary~\ref{cor:GP3}.
\end{proof}

\begin{remark}
In the paper~\cite{DS98}, the notion of a $\lambda$-propagator is introduced.
Essentially, on a causal manifold $(M,\gamma)$, an object $K\in\Derb(\cor_{M\times M})$ is a $\lambda$-propagator if it satisfies:
\eq\label{eq:propag1}
&&\left\{
\parbox{70ex}{
(i) the identity morphism $\cor_M\to\cor_M$ factors through $\reim{q_2}K$,\\
(ii) $\SSi(K)\subset T^*M\times\lambda^a$, \\
(iii) $\SSi(K)\cap (T^*M\times T^*_MM)\subset T^*_{M\times M}(M\times M)$.
} \right.
\eneq
Denote by $\preceq$ any closed causal preorder on $M$.
Then one can prove along the lines of loc.\ cit.\ that
\eq\label{eq:propag}
&&\left\{\parbox{70ex}{
Assume that there exists a $\lambda$-propagator $K$.
Let $B\subset M$ be a closed subset such that $B$ is $\gamma$-proper, $B\neq M$ and $B=\paso{B}$.
Then for any $F\in\Derb(\cor_M)$ satisfying $\SSi(F)\cap\lambda^{a}\subset T^*_MM$, one has $\rsect_B(M;F)\simeq0$.
}\right.\eneq
In other words, one recovers the conclusions of Theorem~\ref{th:GglobalCP1} (indeed, one can also treat the Cauchy problem) when assuming the existence of a propagator instead of that of a Cauchy time function.

However, it seems difficult to construct $\lambda$-propagators, contrarily to what is written in~\cite{DS98}*{Prop.~4.4~(ii)} (see below).
\end{remark}

\begin{erratum}\label{err:2}
In~\cite{DS98}*{Prop.~4.4~(ii)}, it is asserted that under mild conditions on the preorder, the constant sheaf \lp or a variant of this sheaf\rp\, on the graph of the causal preorder is a propagator.
However, the proof is not complete and indeed, the result is not correct without extra hypotheses, as seen in Example~\ref{exa:DS} below.
Note that the results of~\cite{Sc13}*{\S~6}, being built on this wrong statement, they should be replaced with those of this paper.
\end{erratum}

\begin{example}\label{exa:DS}
Let $M = \R^2$ be the plane with coordinates $(x,t)$.
Define the open convex cones
\begin{align*}
\gamma_0^- &= \{ (v,w) \in \R^2 ; w > \max(0,-v) \} \\
\gamma_0^+ &= \{ (v,w) \in \R^2 ; w > |v| \}
\end{align*}
and define the open convex cone $\gamma \subset TM \simeq M \times \R^2$ by
\begin{equation*}
\gamma = (\R \times \R_{<0}) \times \gamma_0^- \cup M \times \gamma_0^+
\end{equation*}
so that $(M,\gamma)$ is a causal manifold.
The cc-preorder is given by
\begin{equation*}
\futccc{(x,t)} = (x,t) + \ol{\gamma_0^+} \text{ if $t>0$, and } (x,t) + \ol{\gamma_0^-} \text{ if $t \leq 0$}.
\end{equation*}
In particular, $(M,\gamma)$ is easily seen to be globally hyperbolic.
One also checks that if $0<\alpha<1$, then $(x,t) \mapsto t + \alpha x$ is a Cauchy time function on $(M,\gamma)$.

We shall prove that $((0,-1),(0,0)) \in (\SSi(\cor_{\gDelccc}))_{((0,0),(1,0))}$, which implies that $\cor_{\gDelccc}$ is not a propagator (it does not satisfy~\eqref{eq:propag1}~(iii)).
Define the open balls $U_1 = B((0,0);1/4)$ and $U_2 = B((1,0);1/4)$.
Then $U_1 \times U_2$ is a neighborhood of $((0,0),(1,0)) \in M^2$ and one has
\begin{equation*}
\gDelccc \cap (U_1 \times U_2) = \{ ((x_1,t_1), (x_2,t_2)) \in U_1 \times U_2 ; t_1 \leq \min(0,t_2) \}.
\end{equation*}
By~\cite{KS90}*{Prop.~5.3.1}, this implies that
\begin{equation*}
(\SSi(\cor_{\gDelccc}))_{((0,0),(1,0))} = \{ ((\xi_1,\tau_1), (\xi_2,\tau_2)) \in \R^2 \times \R^2 ; 0 \leq \tau_2 \leq -\tau_1 \}
\end{equation*}
which contains $((0,-1),(0,0))$.

Note that one can obtain a similar counter-example with a continuous cone, namely, on $M = \R^2$, consider
\begin{equation*}
\gamma = (\R \times \R_{<0}) \times \gamma_0^- \cup (\R \times \R_{\geq 0}) \times \{ (v,w) \in \R^2 ; w > \max(-v,\sqrt{t}|v|) \},
\end{equation*}
but one cannot choose a Lipschitz continuous cone.
\end{example}

\section{Hyperbolic systems on causal manifolds}\label{section:hyperbolic}

\subsection{The Cauchy problem for $\shd$-modules}

Let $(X,\sho_X)$ be a complex manifold and denote as usual by $\shd_X$ the sheaf of rings of holomorphic (finite order) differential operators on $X$.
References for $\shd$-module theory are made to to~\cite{Ka03}.

Let $\shm$ be a left coherent $\shd_X$-module.
By~\cite{KS90}*{Thm.~11.3.3}, the characteristic variety of $\shm$ is equal to the microsupport of the complex of its holomorphic solutions:
\eqn
&&\chv(\shm)=\SSi(\rhom[\shd_X](\shm,\sho_X)).
\eneqn
Let $Y$ be a complex submanifold of the complex manifold $X$.
One says that $Y$ is non-characteristic for $\shm$ if
\eqn
&&\chv(\shm)\cap T^*_YX\subset T^*_XX.
\eneqn
With this hypothesis, the induced system $\shm_Y$ by $\shm$ on $Y$ is
a coherent $\shd_Y$-module and one has the Cauchy--Kowalesky--Kashiwara theorem~\cite{Ka70}:

\begin{theorem}\label{th:CKK}
Assume $Y$ is non-characteristic for $\shm$.
Then $\shm_Y$ is a coherent $\shd_Y$-module and the morphism
\eq\label{eq:CKK}
&&\rhom[\shd_X](\shm,\sho_X)\vert_Y\to\rhom[\shd_Y](\shm_Y,\sho_Y).
\eneq
is an isomorphism.
\end{theorem}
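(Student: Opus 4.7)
The plan is to follow Kashiwara's original strategy \cite{Ka70}, splitting the proof into a coherence statement and an isomorphism statement, and to use the classical Cauchy--Kowalevski theorem as the analytic input.

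First, the coherence of $\shm_Y$. Recall that $\shm_Y$ is computed as $\shd_{Y \to X} \ltens_{\opb{\iota}\shd_X} \opb{\iota}\shm$, where $\iota \colon Y \hookrightarrow X$ is the inclusion and $\shd_{Y\to X} = \sho_Y \tens_{\opb{\iota}\sho_X} \opb{\iota}\shd_X$ is the transfer bimodule. I would take a local presentation of $\shm$ and equip it with a good filtration $F_\bullet\shm$ compatible with the order filtration on $\shd_X$, so that $\gr^F\shm$ is a coherent $\gr\shd_X \simeq \pi_*\sho_{T^*X}$-module with support $\chv(\shm)$. The non-characteristic hypothesis $\chv(\shm)\cap T^*_YX\subset T^*_XX$ implies that the restriction map $\iota_\pi\cl X\times_Y T^*X \to T^*Y$ is finite on $\chv(\shm)\times_XY$, so the induced filtration on $\shm_Y$ has coherent graded pieces and $\shm_Y$ is concentrated in degree~$0$ and coherent over $\shd_Y$.

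Second, the isomorphism~\eqref{eq:CKK}. I would construct the natural morphism via the adjunction $\opb{\iota}\rhom[\shd_X](\shm,\sho_X)\to \rhom[\shd_Y](\shm_Y,\epb{\iota}\sho_X[-d])$ combined with the morphism $\epb{\iota}\sho_X[-d]\to \sho_Y$ coming from the identification of $\sho_Y$ as the tangential solutions of $\sho_X$. To check this morphism is an isomorphism, the standard move is to reduce to the case $\shm = \shd_X/\shd_X P$ for a single non-characteristic operator $P$: both functors $\shm\mapsto \opb{\iota}\rhom[\shd_X](\shm,\sho_X)$ and $\shm\mapsto \rhom[\shd_Y](\shm_Y,\sho_Y)$ send distinguished triangles to distinguished triangles and commute with finite direct sums, so by taking a local free resolution of $\shm$ it suffices to handle the case of a principal system.

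Third, the one-operator case. In local coordinates with $Y = \{z_1=0\}$ and $P$ of order $m$ having $\partial_1^m$ as its top-order symbol (by the non-characteristic hypothesis), the claim becomes the classical Cauchy--Kowalevski statement: the map
\eqn
&& \{u\in \sho_X ; Pu = 0\}\vert_Y \To \sho_Y^m,\quad u\mapsto (u,\partial_1 u,\dots,\partial_1^{m-1}u)\vert_Y
\eneqn
is an isomorphism of sheaves, and the corresponding cokernel statement for $P$ follows from the same theorem by the Spaeth--Weierstrass division of any $v\in\sho_X$ by $P$ modulo terms of tangential order $<m$.

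The main obstacle will be step one: getting coherence of $\shm_Y$ from the non-characteristic hypothesis requires handling the good filtration carefully and verifying that the restriction of the support behaves well; the algebraic input is the finiteness of $\iota_\pi$ on $\chv(\shm)\cap(X\times_YT^*X)$, which has to be translated into a finiteness statement for $\shd_Y$-modules. Once coherence is in hand, the reduction to a single non-characteristic operator and the invocation of classical Cauchy--Kowalevski are routine.
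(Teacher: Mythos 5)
The paper does not prove Theorem~\ref{th:CKK}; it states the theorem and cites Kashiwara's thesis \cite{Ka70}, so there is no proof in the paper against which to compare your sketch. That said, your proposal contains a genuine gap that would have to be fixed before it is a proof.

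The problem is in the reduction to a single operator. You claim that since both functors $\shm\mapsto \opb{\iota}\rhom[\shd_X](\shm,\sho_X)$ and $\shm\mapsto \rhom[\shd_Y](\shm_Y,\sho_Y)$ are triangulated and commute with finite direct sums, it suffices, after taking a local free resolution of $\shm$, to handle $\shm=\shd_X/\shd_X P$. But a free resolution replaces $\shm$ by modules of the form $\shd_X^{n}$, not by cyclic modules $\shd_X/\shd_X P$, and the conclusion of the theorem is simply false for $\shd_X^{n}$: one has $\chv(\shd_X)=T^*X$, so no proper submanifold $Y$ is non-characteristic; $\shd_X|_Y=\shd_{Y\to X}$ is not a coherent $\shd_Y$-module; and $\rhom[\shd_X](\shd_X,\sho_X)|_Y=\sho_X|_Y$ is not isomorphic to $\rhom[\shd_Y](\shd_{Y\to X},\sho_Y)$. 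In other words, the non-characteristic hypothesis is not preserved along the terms of a free resolution, so the d{\'e}vissage you propose does not get off the ground.

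The reduction that actually works in Kashiwara's argument (and in standard expositions) is different: one first reduces by induction on $\operatorname{codim}Y$ to the hypersurface case, and in codimension one uses the good filtration and a Weierstrass--Sp\"ath division argument to produce, locally, a ``prepared'' presentation of $\shm$ --- finitely many generators $u_i$ together with relations whose principal symbols are monic in a conormal derivative $\partial_1$, of orders $m_i$ --- so that $\shm$ is $\sho_X$-spanned by $\partial_1^j u_i$ for $0\le j<m_i$. The classical Cauchy--Kowalevski theorem is then applied to that prepared presentation, not to a free resolution; the non-characteristic hypothesis is exactly what guarantees such a presentation exists and stays within the class of modules to which the theorem applies. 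Your first step (coherence of $\shm_Y$ via good filtrations and finiteness of $\iota_\pi$ on $\chv(\shm)\cap(Y\times_XT^*X)$) is sound in outline, and your third step (the one-operator Cauchy--Kowalevski statement) is correct; it is the bridge between them that needs to be rebuilt along the lines just described.
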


\subsection{Review on hyperbolic systems}

We shall very briefly recall how to apply the preceding results to hyperbolic systems, using the results of~\cite{KS90}.
For a detailed exposition, we refer to~\cite{Sc13}.

Now let $M$ be a real analytic manifold, say of dimension $n$, and let $X$ be a complexification of $M$.
Consider a vector bundle $\tau\cl E\to M$.
It gives rise to a morphism of vector bundles over $M$, $\tau'\cl TE\to E\times_MTM$ which by duality gives the map $\tau_d\cl E\times_MT^*M\to T^*E$.
By restricting to the zero-section of $E$, we get the map:
\eqn
&& T^*M\into T^*E.
\eneqn
Applying this construction to the bundle $T^*_MX$ above $M$, and using the Hamiltonian isomorphism we get the maps
\eq\label{eq:hyp1}
&&T^*M\into T^*T^*_MX\simeq T_{T^*_MX}T^*X.
\eneq

\begin{definition}
Let $\shm$ be a coherent left $\shd_X$-module.
\banum
\item
We set
\eq\label{eq:hyp2}
&&\hchv_M(\shm)=T^*M\cap C_{T^*_MX}(\chv(\shm))
\eneq
and call $\hchv_M(\shm)$ the \define{hyperbolic characteristic variety} of $\shm$ along $M$.
\item
A vector $\theta\in T^*M$ such that $\theta\notin\hchv_M(\shm)$ is called \define{hyperbolic} with respect to $\shm$.
\item
A submanifold $N$ of $M$ is called \define{hyperbolic} for $\shm$ if
\eq\label{eq:hyp3}
&&T^*_NM\cap\hchv_M(\shm)\subset T^*_MM,
\eneq
that is, any nonzero vector of $T^*_NM$ is hyperbolic for $\shm$.
\item
For a differential operator $P$, we set $\hchv(P) = \hchv_M(\shd_X / \shd_X \cdot P)$.
\eanum
\end{definition}

By construction, $\hchv_M(\shm)$ is a closed conic subset of $T^*M$, invariant by the antipodal map $(x;\xi)\mapsto(x;-\xi)$.

\begin{example}
Assume we have a local coordinate system $(x+\sqrt{-1}y)$ on $X$ with $M=\{y=0\}$ as above and let $(x+\sqrt{-1}y;\xi+\sqrt{-1}\eta)$ be the coordinates on $T^*X$ so that $T^*_MX=\{y=\xi=0\}$.
Let $(x_0;\theta_0)\in T^*M$ with $\theta_0\not=0$.
Let $P$ be a differential operator with principal symbol $\sigma(P)$.
Applying the definition of the normal cone, we find that $(x_0;\theta_0)$ is hyperbolic for $P$ if and only if
\eq\label{eq:hyp4}
&&\left\{\parbox{65ex}{
there exist an open neighborhood $U$ of $x_0$ in $M$ and an open conic neighborhood $\gamma$ of $\theta_0\in\R^n$ such that $\sigma(P)(x;\theta+\sqrt{-1}\eta)\not=0$ for all $\eta\in\R^n$, $x\in U$ and $\theta\in\gamma$.
}\right.
\eneq
As noticed by M.~Kashiwara (see~\cite{BoS73}*{\S~2.}), it follows from the local Bochner's tube theorem that Condition~\eqref{eq:hyp4} can be simplified:
$(x_0;\theta_0)$ is hyperbolic for $P$ if and only if
\eq\label{eq:hyp4K}
&&\left\{\parbox{65ex}{
there exist an open neighborhood $U$ of $x_0$ in $M$ such that $\sigma(P)(x;\theta_0+\sqrt{-1}\eta)\not=0$ for all $\eta\in\R^n$, and $x\in U$.
}\right.
\eneq
Hence, one recovers the classical notion of a (weakly) hyperbolic operator (see~\cite{Le53}).
\end{example}

Now, consider the sheaves
\eq\label{eq:sato1}
&&\sha_M=\sho_X\vert_M,\quad \shb_M=H^n_M(\sho_X)\tens\ori_M\simeq\rsect_M(\sho_X)\tens\ori_M\,[n].
\eneq
Here, $\ori_M$ is the orientation sheaf on $M$ and $n = \dim M$.
The sheaf $\sha_M$ is the sheaf of (complex valued) real analytic functions on $M$ and the sheaf $\shb_M$ is the sheaf of Sato's hyperfunctions on $M$.
Recall that the sheaf $\shb_M$ is flabby and the sheaf $\sha_M$ is $\Gamma$-acyclic, that is, $\rsect(U;\sha_M)$ is concentrated in degree $0$ for each open set $U\subset M$.
Applying~\cite{KS90}*{Cor.~6.4.4}, we get:

\begin{theorem}[see~{KS90}]\label{th:hyp3}
Let $\shm$ be a coherent $\shd_X$-module.
Then
\eqn
&&\SSi(\rhom[\shd_X](\shm,\shb_M))\subset\hchv_M(\shm),\\
&&\SSi(\rhom[\shd_X](\shm,\sha_M))\subset\hchv_M(\shm).
\eneqn
\end{theorem}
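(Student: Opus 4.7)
The approach would be to reduce both microsupport bounds to a single application of~\cite{KS90}*{Cor.~6.4.4} by way of Kashiwara's identification of the characteristic variety with the microsupport of the holomorphic solution complex.

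First I would set $F \eqdot \rhom[\shd_X](\shm,\sho_X) \in \Derb(\cor_X)$, so that by Kashiwara's theorem~\cite{KS90}*{Thm.~11.3.3}, recalled just before the statement, $\SSi(F) = \chv(\shm)$. Next I would express the two solution complexes in terms of $F$ and the closed embedding $i \colon M \into X$. Since the $\shd_X$-actions on $\sha_M$ and $\shb_M$ factor through $\sho_X$, and since $\sha_M = \opb{i}\sho_X$ while $\shb_M \simeq \rsect_M(\sho_X) \tens \ori_M[n]$, the standard functorial formalism combined with the coherence of $\shm$ gives the natural isomorphisms
\begin{align*}
\rhom[\shd_X](\shm,\sha_M) &\simeq \opb{i}F, \\
\rhom[\shd_X](\shm,\shb_M) &\simeq \opb{i}\rsect_M(F) \tens \ori_M[n].
\end{align*}

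Third, I would apply~\cite{KS90}*{Cor.~6.4.4} to bound the microsupport of both right-hand sides by the normal cone of $\SSi(F)$ along the conormal bundle $T^*_MX$: through the canonical identification~\eqref{eq:hyp1} $T^*M \hookrightarrow T_{T^*_MX}T^*X$, this yields
\[
\SSi(\opb{i}F), \ \SSi(\opb{i}\rsect_M F) \subset T^*M \cap C_{T^*_MX}(\SSi(F)).
\]
The shift and orientation twist $\ori_M[n]$ are invisible to the microsupport, so substituting $\SSi(F) = \chv(\shm)$ gives exactly the bound $\hchv_M(\shm)$ in both cases.

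The main obstacle is not Step~2, whose identifications are routine consequences of the compatibility of $\opb{i}$ and $\rsect_M$ with the $\shd_X$-action once $\shm$ is coherent, but rather confirming that~\cite{KS90}*{Cor.~6.4.4} delivers precisely the normal-cone description~\eqref{eq:hyp2}. This rests on the behavior of Sato's microlocalization $\mu_M F$ with respect to microsupports developed in~\cite{KS90}*{Ch.~IV--VI}: the microsupport of the restriction to the zero-section of $\mu_M F$ is controlled by $C_{T^*_MX}(\SSi(F))$. Once that identification is in hand, the theorem follows immediately from Steps~1 and~2.
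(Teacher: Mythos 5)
Your proposal is correct and matches the paper's own (one-line) proof, which is precisely the application of~\cite{KS90}*{Cor.~6.4.4} to $F=\rhom[\shd_X](\shm,\sho_X)$ after identifying $\SSi(F)=\chv(\shm)$ via Kashiwara's theorem. You have simply spelled out the standard coherence-based identifications of the two solution complexes with $\opb{i}F$ and $\opb{i}\rsect_M(F)\tens\ori_M[n]$, which the paper leaves implicit.
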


In other words, hyperfunction (as well as real analytic) solutions of the system $\shm$ propagate in the hyperbolic directions.

Now we consider the following situation:
$N\into M$ is a real analytic smooth closed submanifold of $M$ of codimension $d$ and $Y\into X$ is a complexification of $N$ in $X$.

The next result was announced without proof in~\cite{Sc13}.
For sake of completeness, we give a proof here.

\begin{lemma}
Let $M,X,N,Y$ be as above and let $\shm$ be a coherent $\shd_X$-module.
Assume that $N$ is hyperbolic for $\shm$.
Then $Y$ is non-characteristic for $\shm$ in a neighborhood of $N$.
\end{lemma}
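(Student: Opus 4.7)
The plan is to argue by contradiction, extracting a sequence in $\chv(\shm)$ that violates the defining inclusion $T^*_N M \cap \hchv_M(\shm) \subset T^*_M M$. The two main ingredients are the $\C^*$-invariance of $\chv(\shm)$ (standard for characteristic varieties of coherent $\shd_X$-modules on a complex manifold) and the normal-cone description of $\hchv_M(\shm)$ through the embedding $T^*M \hookrightarrow T_{T^*_M X}T^*X$ of \eqref{eq:hyp1}.

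Choose local holomorphic coordinates $z = x + \sq\, y$ on $X$ with $x = (x', x'')$, $x' \in \R^d$, such that $M = \{y = 0\}$, $N = \{y = 0,\ x' = 0\}$, and $Y = \{z' = 0\}$. On $T^*X$ write $\zeta = \xi + \sq\, \eta$, so that $T^*_M X = \{y = 0,\ \xi = 0\}$, $T^*_N M = \{x' = 0,\ \xi'' = 0\}$, and $T^*_Y X = \{x' = 0,\ y' = 0,\ \xi'' = 0,\ \eta'' = 0\}$. Suppose $Y$ is not non-characteristic for $\shm$ in any neighborhood of $N$; working around a fixed point of $N$, there is a sequence $(z_n; \zeta_n) \in \chv(\shm) \cap T^*_Y X$ with $\zeta_n \neq 0$ and $z_n \to z_\infty \in N$. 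Write $z_n = (0,\ x''_n + \sq\, y''_n)$ with $y''_n \to 0$, and $\zeta_n = (\theta'_n + \sq\, \eta'_n,\ 0)$. Normalize by $\R_{>0}$-conicity so that $|(\theta'_n, \eta'_n)| = 1$; after extracting a subsequence, $(\theta'_n, \eta'_n) \to (\theta'_\infty, \eta'_\infty)$ has unit norm.

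Using the $\C^*$-invariance of $\chv(\shm)$ and the fact that $T^*_Y X$ is stable under fiberwise multiplication by $\sq$, I reduce to the case $\theta'_\infty \neq 0$: if $\theta'_\infty = 0$ then $\eta'_\infty \neq 0$, and replacing $\zeta_n$ by $\sq\, \zeta_n$ makes the new real part converge to $-\eta'_\infty \neq 0$. Now choose $t_n > 0$ with $t_n \to 0$ slowly enough that $y_n / t_n \to 0$, e.g.\ $t_n = |y''_n|^{1/2} + 1/n$. By $\R_{>0}$-conicity, $p_n \eqdot (x_n, y_n; t_n\xi_n, t_n\eta_n) \in \chv(\shm)$ and $p_n \to (x_\infty, 0; 0, 0) \in T^*_M X$. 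Setting $q_n \eqdot (x_n, 0; 0, t_n\eta_n) \in T^*_M X$ and $c_n \eqdot 1/t_n$, one computes
\[
c_n(p_n - q_n) = (0,\ y_n/t_n;\ \xi_n,\ 0) \longrightarrow (0,\ 0;\ (\theta'_\infty, 0),\ 0),
\]
which represents a nonzero element of $C_{T^*_M X}(\chv(\shm))$ at $(x_\infty, 0; 0, 0)$ whose class in the normal bundle $T_{T^*_M X}T^*X$ has coordinates $(0, (\theta'_\infty, 0))$ in the $(y,\xi)$-directions.

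A direct computation with the symplectic form $\omega = d\xi \wedge dx + d\eta \wedge dy$ shows that the embedding \eqref{eq:hyp1} sends $(x; \theta) \in T^*M$ precisely to the class $(0, \theta)$ at $(x, 0) \in T^*_M X$, so the normal direction constructed above is the image of $(x_\infty; (\theta'_\infty, 0)) \in T^*M$. Hence $(x_\infty; (\theta'_\infty, 0)) \in \hchv_M(\shm)$; since $x_\infty \in N$ and the $\theta''$-component vanishes, this point lies in $T^*_N M$, while $\theta'_\infty \neq 0$ puts it outside $T^*_M M$, contradicting the hyperbolicity of $N$ for $\shm$. The delicate step is the calibration of the two scalings $t_n$ and $c_n$ so that all convergence conditions in the normal-cone definition hold simultaneously; the $\C^*$-invariance of $\chv(\shm)$, which allows the initial rotation ensuring $\theta'_\infty \neq 0$, is the genuinely complex-analytic ingredient without which the argument would fail when the limit covector is purely imaginary.
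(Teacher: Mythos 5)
Your proof is correct and takes essentially the same route as the paper: both arguments boil down to producing a nonzero element of $C_{T^*_M X}(\chv(\shm)) \cap T^*M$ lying in $T^*_N M$ and contradicting hyperbolicity, with the $\C^\times$-conicity of $\chv(\shm)$ as the decisive complex-analytic ingredient. The paper works directly at a point of $N$ and computes $C_{T^*_M X}(L) = T^*_M X + L$ for the complex line $L = \C^\times\cdot\theta$ through a putative characteristic covector, the two real dimensions of $L$ automatically delivering both $\mathrm{Re}\,\theta$ and $\mathrm{Im}\,\theta$ upon intersection with $T^*M$; you instead build a single normal-cone vector by calibrated sequential scaling, and your preliminary $\sq$-rotation plays the role of the second real dimension of $L$. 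Two incidental gains in your version: you make explicit the passage from non-characteristicity over points of $N$ to non-characteristicity in a neighborhood of $N$ (implicit in the paper via closedness and conicity of $\chv(\shm)$), and you bypass the paper's auxiliary identity $C_{T^*_M X}(T^*_Y X)\cap T^*M = T^*_N M$ by tracking directly that the $\xi''$-component of the limiting covector vanishes.
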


\begin{proof}
(i)
Consider the exact sequence of vector bundles over $N$:
\eqn
&&0\to T^*_MX\times_MN\times_YT^*_YX\to N\times_YT^*_YX\to T^*_NM\to 0
\eneqn
It defines the inclusion $T^*_NM\subset C_{T^*_MX}(T^*_YX)$ hence the inclusion $T^*_NM\subset C_{T^*_MX}(T^*_YX)\cap T^*M$.
Using local coordinates one checks the equality:
\eq\label{eq:hyp5}
&&C_{T^*_MX}(T^*_YX)\cap T^*M=T^*_NM.
\eneq

\spa
(ii)
Now let $(x;\theta)\in \chv(\shm)\cap T^*_YX\cap(M\times_XT^*X)$ and denote by $L\subset T^*_xX$ the cone $(x;\C^\times\cdot\theta)$.
Using~\eqref{eq:hyp2},~\eqref{eq:hyp3} and~\eqref{eq:hyp5}, we get
\eq\label{eq:hyp6}
&&C_{T^*_MX}(L)\cap T^*M\subset \hchv(\shm)\cap T^*_NM\subset T^*_MM.
\eneq
Choose a local coordinate system $(x,y)$ on $X$ so that $M=\{y=0\}$ and let $(x,y;\xi,\eta)$ denote the associated coordinates.
Identifying $T_{T^*_MX}T^*X$ with $T^*X$, we get
\eqn
&&T^*M=\{(x,0;\xi,0)\},\quad T^*_MX=\{(x,0;0,\eta)\},\\
&&C_{T^*_MX}(L)=T^*_MX+L.
\eneqn
Then~\eqref{eq:hyp6} implies $\theta=0$.
\end{proof}

The following result is easily deduced from Theorem~\ref{th:hyp3}.
See~\cite{Sc13} for details.

\begin{theorem}\label{th:hyp4}
Let $M$ be a real analytic manifold, $X$ a complexification of $M$, $\shm$ a coherent $\shd_X$-module.
Let $N\into M$ be a real analytic smooth closed submanifold of $M$ and $Y\into X$ is a complexification of $N$ in $X$.
We assume
\eq\label{hyp:nonhyp}
&& T^*_NM\cap\hchv_M(\shm)\subset T^*_MM,
\eneq
that is, $N$ is hyperbolic for $\shm$.
Then $Y$ is non-characteristic for $\shm$ in a neighborhood of $N$ and
the isomorphism~\eqref{eq:CKK} induces the isomorphism
\eq\label{eq:CKhyp}
&&\rhom[\shd_X](\shm,\shb_M)\vert_N\isoto \rhom[\shd_Y](\shm_Y,\shb_N).
\eneq
\end{theorem}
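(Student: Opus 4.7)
The plan is to combine the holomorphic Cauchy--Kowalevskaya--Kashiwara isomorphism with Sato's presentation of hyperfunctions as local cohomology, and then to use the microlocal bound from Theorem~\ref{th:hyp3} to commute the restriction to $N$ past the local cohomology functor.

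First, the preceding lemma yields that $Y$ is non-characteristic for $\shm$ in an open neighborhood of $N$ in $X$, hence $\shm_Y$ is a coherent $\shd_Y$-module and Theorem~\ref{th:CKK} provides the isomorphism $F\vert_Y\isoto\rhom[\shd_Y](\shm_Y,\sho_Y)$, where $F\eqdot\rhom[\shd_X](\shm,\sho_X)$. Using Sato's presentation~\eqref{eq:sato1} together with the commutation of $\RR\Gamma_M=\rhom(\cor_M,\scdot)$ with the $\shd_X$-linear $\rhom$, one obtains on $M$
\eqn
&&\rhom[\shd_X](\shm,\shb_M)\simeq\RR\Gamma_M F\tens\ori_M[n],
\eneqn
and similarly, invoking the CKK isomorphism just obtained, on $N$
\eqn
&&\rhom[\shd_Y](\shm_Y,\shb_N)\simeq\RR\Gamma_N(F\vert_Y)\tens\ori_N[n-d].
\eneqn

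The key microlocal input is that Theorem~\ref{th:hyp3} gives the bound $\SSi(\shh)\subset\hchv_M(\shm)$ for $\shh\eqdot\rhom[\shd_X](\shm,\shb_M)$, so that the hyperbolicity assumption~\eqref{hyp:nonhyp} amounts precisely to the real embedding $i\cl N\into M$ being non-characteristic for $\shh$. By~\cite{KS90}*{Prop.~5.4.13}, the natural morphism $\opb{i}\shh\tens\epb{i}\cor_M\to\epb{i}\shh$ is then an isomorphism. Using $\epb{i}\cor_M\simeq\ori_{N/M}[-d]$ together with $\ori_M\vert_N\simeq\ori_N\tens\ori_{N/M}$, one checks that this non-characteristic restriction, when rewritten via the two Sato presentations above, coincides with the morphism~\eqref{eq:CKhyp} constructed from CKK, and is therefore an isomorphism.

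The main obstacle will be the bookkeeping of shifts and of the three orientation sheaves $\ori_M\vert_N$, $\ori_N$ and $\ori_{N/M}$ that appear when matching the two Sato presentations on $N$ against the non-characteristic restriction formula, and the verification of naturality of these identifications in the $\shd$-module $\shm$, so that the comparison map really is induced by the holomorphic CKK isomorphism for $F\vert_Y$. Substantively, however, the entire microlocal content is concentrated in the invocation of Theorem~\ref{th:hyp3}, which converts the $\shd$-module hyperbolicity hypothesis~\eqref{hyp:nonhyp} into the sheaf-theoretic non-characteristic property that makes~\cite{KS90}*{Prop.~5.4.13} applicable.
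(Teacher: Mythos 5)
Your plan is correct and matches the route the paper delegates to~\cite{Sc13} (the paper itself gives no proof, only the reference): reduce to Sato presentations via the holomorphic CKK isomorphism, invoke Theorem~\ref{th:hyp3} to translate hyperbolicity of $N$ into $i\cl N\into M$ being non-characteristic for the hyperfunction solution complex, and conclude by the non-characteristic deformation lemma~\cite{KS90}*{Prop.~5.4.13}.

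One caveat: the deferred ``bookkeeping'' is not purely orientation-sheaf and shift algebra. Writing $i_Y\cl Y\into X$, the two Sato presentations yield, after applying CKK, objects
$\epb{(i_M)}F\tens\ori_M[n]$ on $M$ and $\epb{(i_{N/Y})}\opb{(i_Y)}F\tens\ori_N[n-d]$ on $N$,
and matching $\epb{i}$ of the former against the latter requires converting $\opb{(i_Y)}F$ into $\epb{(i_Y)}F$. This is a second, genuine application of~\cite{KS90}*{Prop.~5.4.13}, this time to the complex embedding $i_Y$ and the sheaf $F=\rhom[\shd_X](\shm,\sho_X)$ (non-characteristic by the preceding lemma and $\SSi(F)=\chv(\shm)$). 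It produces the factor $\omega_{Y/X}^{\tens-1}\simeq\cor_Y[2d]$ that reconciles the shifts $[n]$ and $[n-d]$ with $\epb{i}\cor_M\simeq\ori_{N/M}[-d]$; only then does $\ori_M\vert_N\simeq\ori_N\tens\ori_{N/M}$ close the computation. So you should record this holomorphic non-characteristic step explicitly rather than fold it into the orientation-sheaf identifications.
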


In other words, the Cauchy problem in a neighborhood of $N$ for hyperfunctions on $M$ is well-posed for hyperbolic systems.

Note that for $f\in \ext[\shd_X]{j}(\shm,\shb_M)$, its wavefront set does not intersect $T^*_YX\cap T^*_MX$ and thus its restriction to $\ext[\shd_Y]{j}(\shm_Y,\shb_N)$ is well-defined.
In particular, if $\shm=\shd_X/\shd_X\cdot P$, this means that if $f$ is a hyperfunction on $M$ defined in a neighborhood of $N$ solution of $Pf=0$, then its wavefront set does not intersect $T^*_YX\cap T^*_MX$ and therefore its restriction is well-defined (see~\cite{SKK73}).

\subsection{Hyperbolic systems on causal manifolds}

\begin{theorem}\label{th:HypCPgl}
Let $M$ and $S$ be a real analytic manifolds, $X$ a complexification of $M$, $\shm$ a coherent $\shd_X$-module.
Let $I$ be a finite set and let $\{\gamma_{M,i}\}_{i\in I}$ and $\{\gamma_{S,i}\}_{i\in I}$ be two families of open convex cones in $TM$ and $TS$ respectively.
Let $f\cl M\to S$ be a morphism of manifolds which defines morphisms of causal manifolds $f\cl(M,\gamma_{M,i})\to(S,\gamma_{S,i})$.
For each $i\in I$ let $\preceq_i$ be a closed preorder on $M$, causal for $\gamma_i$.
Assume
\banum
\item
$f$ is submersive and surjective,
\item
$S$ is contractible,
\item
for any compact $K\subset M$ and all $i\in I$, the map $f$ is proper on $\pasii{K}$,
\item
$\hchv(\shm)\cap\lambda_{M,i}\subset T^*_MM$ for all $i\in I$,
\item
$\bigcup_i\Int(\lambda_{S,i})=T^*S\setminus T^*_SS$.
\eanum
Let $a\in S$ and assume that $N\eqdot\opb{f}(\{a\})$ is real analytic.
Let $Y$ be a complexification of $N$ in $X$.
Then the isomorphism~\eqref{eq:CKK} induces the isomorphism
\eq\label{eq:CKhypGl}
&&\RHom[\shd_X](\shm,\shb_M)\isoto \RHom[\shd_Y](\shm_Y,\shb_{N}).
\eneq
\end{theorem}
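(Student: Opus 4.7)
Set $F \eqdot \rhom[\shd_X](\shm, \shb_M) \in \Derb(\cor_M)$, so that $\rsect(M; F) \simeq \RHom[\shd_X](\shm, \shb_M)$. The plan is to apply the global propagation result (Corollary~\ref{cor:GP3}) to $F$ together with the morphism $f \colon M \to S$, and then to use the hyperbolic Cauchy--Kowalevsky--Kashiwara theorem (Theorem~\ref{th:hyp4}) on the fibre $N$ to identify the restriction $F\vert_N$ with $\rhom[\shd_Y](\shm_Y, \shb_N)$.

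First, by Theorem~\ref{th:hyp3}, one has $\SSi(F) \subset \hchv_M(\shm)$, hence hypothesis~(d) gives $\SSi(F) \cap \lambda_{M,i} \subset T^*_MM$ for each $i \in I$. Combined with the remaining hypotheses~(a), (c), (e) and the contractibility of $S$ in~(b), the assumptions of Corollary~\ref{cor:GP3} are met, and we obtain the isomorphism
\eq\label{eq:plan-prop}
&&\rsect(M; F) \isoto \rsect(N; F\vert_N).
\eneq

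The second ingredient is to verify that $N$ is hyperbolic for $\shm$, so that Theorem~\ref{th:hyp4} applies. Since $f$ is submersive, the conormal bundle $T^*_NM$ is exactly the image $f_d(M \times_S T^*_aS)$; more precisely every nonzero $p \in T^*_NM$ is of the form $p = f_d(q)$ with $q \in T^*_aS \setminus T^*_SS$. By~(e) there exists $i \in I$ with $q \in \Int(\lambda_{S,i}) \subset \lambda_{S,i}$. Since $f$ is a causal morphism $(M,\gamma_{M,i}) \to (S,\gamma_{S,i})$, Proposition~\ref{pro:causalmor1} gives $\Lambda_f \aconv \lambda_{S,i} \subset \lambda_{M,i}$, hence $p = f_d(q) \in \lambda_{M,i}$. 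Then~(d) forces $p \in T^*_MM$, a contradiction unless $p \in T^*_MM$. This shows $T^*_NM \cap \hchv_M(\shm) \subset T^*_MM$, i.e., $N$ is hyperbolic for $\shm$.

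Theorem~\ref{th:hyp4} then provides the isomorphism $F\vert_N \simeq \rhom[\shd_Y](\shm_Y, \shb_N)$, induced by~\eqref{eq:CKK}. Taking $\rsect(N; \scdot)$ and combining with~\eqref{eq:plan-prop} yields
\eqn
&&\RHom[\shd_X](\shm, \shb_M) \simeq \rsect(M; F) \isoto \rsect(N; F\vert_N) \simeq \RHom[\shd_Y](\shm_Y, \shb_N),
\eneqn
and by construction this composition is the morphism induced by~\eqref{eq:CKK}. The main technical obstacle is the verification that $N$ is hyperbolic for $\shm$: it requires chaining the causality of $f$ (through Proposition~\ref{pro:causalmor1}) with the covering condition~(e) on $T^*S \setminus T^*_SS$, and observing that submersivity of $f$ identifies $T^*_NM$ with the pullback of $T^*_aS$. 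Everything else is a direct application of the propagation machinery of Section~\ref{section:gcausal} and the hyperbolic Cauchy problem recalled above.
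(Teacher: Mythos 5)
Your proof is correct and follows the same route as the paper: set $F \eqdot \rhom[\shd_X](\shm,\shb_M)$, bound $\SSi(F)$ by $\hchv_M(\shm)$ via Theorem~\ref{th:hyp3}, apply Corollary~\ref{cor:GP3} for propagation to the fibre, and then Theorem~\ref{th:hyp4} for the hyperbolic Cauchy--Kowalevsky--Kashiwara isomorphism on $N$. The one place where you add genuine content is the explicit verification that $N$ is hyperbolic for $\shm$ (the hypothesis needed by Theorem~\ref{th:hyp4}), which the paper leaves implicit; your argument — identify $T^*_NM$ with $f_d(N\times_S T^*_aS)$ using submersivity, feed a nonzero $q\in T^*_aS$ through the covering condition~(e) into some $\Int(\lambda_{S,i})$, push it through $\Lambda_f\aconv\lambda_{S,i}\subset\lambda_{M,i}$ by Proposition~\ref{pro:causalmor1}, and conclude by~(d) — is correct and is essentially the same argument that makes $\iota$ non-characteristic for $F$ inside Corollary~\ref{cor:GP3}. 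This is a worthwhile clarification, not a deviation.
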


Recall that on a topological space $M$, $\RHom\simeq \rsect(M;\scbul)\circ \rhom$.

\begin{proof}
Set $F = \rhom[\shd_X](\shm,\shb_M)$.
Then $\SSi(F)\subset \hchv(\shm)$ by Theorem~\ref{th:hyp3}.
Applying Corollary~\ref{cor:GP3}, we get $\rsect(M;F)\simeq\rsect(N;F\vert_{N})$ and, applying Theorem~\ref{th:hyp4}, we get $\rsect(N;F\vert_{N})\simeq \RHom[\shd_Y](\shm_Y,\shb_{N})$.
\end{proof}

\begin{corollary}\label{cor:CKGcausal}
Let $(M,\gamma,\tim)$ be a G-causal manifold and assume that $M$ is real analytic.
Let $\shm$ be a coherent $\shd_X$-module satisfying $\hchv(\shm)\cap\lambda\subset T^*_MM$.
\banum
\item
Let $A$ be a closed subset satisfying either $A=\futccc{A}$ and $A\subset \opb{\tim}([a,+\infty))$ or $A=\pasccc{A}$ and $A\subset \opb{\tim}((-\infty,a])$ for some $a\in\R$.
Then $\RHom[\shd_X](\shm,\sect_A\shb_M)\simeq0$.
In particular, hyperfunction solutions of the system $\shm$ defined on $M\setminus A$ extend uniquely to the whole of $M$ as hyperfunction solutions of the system.
\item
Let $N=\opb{\tim}(0)$ and assume that $N$ is real analytic.
Let $Y$ be a complexification of $N$ in $X$.
Then the restriction morphism
$\RHom[\shd_X](\shm,\shb_M)\to\RHom[\shd_Y](\shm_Y,\shb_N)$ is an isomorphism.
In other words, the Cauchy problem for hyperfunctions with initial data on $N$ is globally well-posed.
\eanum
\end{corollary}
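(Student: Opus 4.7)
The plan is to set $F = \rhom[\shd_X](\shm, \shb_M)$ and deduce both parts from the sheaf-theoretic statements of Subsection~\ref{subsect:ShvGcausal}. By Theorem~\ref{th:hyp3} one has $\SSi(F) \subset \hchv_M(\shm)$, and since $\hchv_M(\shm)$ is invariant under the antipodal map (noted immediately after its definition), the one-sided hypothesis $\hchv(\shm) \cap \lambda \subset T^*_MM$ upgrades for free to the two-sided bound
$$\SSi(F) \cap (\lambda \cup \lambda^a) \subset T^*_MM.$$
This antipodal symmetry is the conceptual point that lets a single condition on $\shm$ control both past and future propagation simultaneously.

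For part (a), in the past case ($A = \pasccc{A}$ and $A \subset \opb{\tim}((-\infty,a])$), I would apply Theorem~\ref{th:GglobalCP1}(i) directly to $F$ (using $\SSi(F) \cap \lambda^a \subset T^*_MM$) to obtain $\rsect_A(M;F) \simeq 0$, which is exactly $\RHom[\shd_X](\shm, \sect_A \shb_M) \simeq 0$. In the future case ($A = \futccc{A}$ and $A \subset \opb{\tim}([a,+\infty))$), I would run the same argument on the reversed G-causal manifold $(M, \gamma^a, -\tim)$: swapping $\gamma$ for $\gamma^a$ exchanges the cc-past and cc-future, and replacing $\tim$ by $-\tim$ exchanges super- and sub-level sets, reducing to the past case already treated. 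The unique-extension consequence follows from the distinguished triangle $\rsect_A(M;F) \to \rsect(M;F) \to \rsect(M \setminus A; F) \to[+1]$ by passing to $H^0$.

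For part (b), I would invoke Theorem~\ref{th:HypCPgl} with $S = \R$, $f = \tim$, and index set $I = \{1,2\}$, taking $\gamma_{M,1} = \gamma$, $\gamma_{M,2} = \gamma^a$, $\gamma_{S,1} = \R \times \R_{>0}$, $\gamma_{S,2} = \R \times \R_{<0}$, with $\preceq_1$ the cc-preorder of $(M,\gamma)$ and $\preceq_2$ its opposite. The hypotheses then reduce to standard checks: $\tim$ is submersive by Definition~\ref{def:Gcausal} and surjective by Proposition~\ref{prop:timeSurj}; $\R$ is contractible; the polar cones satisfy $\Int(\lambda_{S,1}) \cup \Int(\lambda_{S,2}) = T^*\R \setminus T^*_\R \R$; the bound $\hchv(\shm) \cap \lambda_{M,i} \subset T^*_MM$ for $i=1,2$ is precisely the augmented hypothesis established at the outset; and the properness of $\tim$ on $\pasccc{K}$, respectively on $\futccc{K}$, for compact $K$ is exactly the content of the G-causal data. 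Theorem~\ref{th:HypCPgl} then yields the isomorphism~\eqref{eq:CKhypGl}, which is the asserted well-posedness.

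There is no serious technical obstacle here, since the substantive work has already been carried out in Theorems~\ref{th:GglobalCP1} and~\ref{th:HypCPgl}. The only care required is the initial observation that $\hchv_M(\shm)$ is antipodally symmetric, which is what lets the single one-sided hypothesis on $\shm$ feed into the two-cone input required by Theorem~\ref{th:HypCPgl} and simultaneously justify both directions of part (a).
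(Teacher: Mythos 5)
Your proof is correct and follows the same route as the paper (which, for part (b), itself points to Theorem~\ref{th:HypCPgl} as an alternative to Theorem~\ref{th:GglobalCP1}(ii)). You helpfully make explicit two points the paper's one-line proof leaves implicit: the antipodal invariance of $\hchv_M(\shm)$, which upgrades the one-sided hypothesis to the two-cone input, and the reduction of the future case of (a) to the past case via the reversed G-causal structure $(M,\gamma^a,-\tim)$.
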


Note that if $K$ is compact, then $A= \futccc{K}$ or $A=\pasccc{K}$ will satisfy the condition in (a).

\begin{proof}
(a)
Apply Theorem~\ref{th:GglobalCP1}~(i).

\spa
(b)
Apply Theorem~\ref{th:GglobalCP1}~(ii) or Theorem~\ref{th:HypCPgl}.
\end{proof}

\begin{remark}
Theorem~\ref{th:HypCPgl} and Corollary~\ref{cor:CKGcausal} remain true when replacing the sheaves of hyperfunctions by those of real analytic functions, that is, replacing $\shb_M$ and $\shb_N$ with $\sha_M$ and $\sha_N$, respectively.
\end{remark}

\subsection{Examples}\label{subsection:example2}

In this subsection, all manifolds will be real (or complex) analytic and the differential operators we consider will have analytic coefficients.

We shall often assume that $M=N\times\R$.
In this situation, for $(x,t)\in M$, the vector $(x,t;dt)=(x,t;0,1)$ is well-defined in $T^*M$.

\subsubsection{Product with a compact Cauchy hypersurface}

Let us translate Corollary~\ref{cor:CKGcausal} in a particular situation.

\begin{proposition}\label{pro:wave2}
Let $N$ be a real analytic compact manifold and let $M = N \times \R$.
Let $P$ be a differential operator on $M$ of order $m$ which is hyperbolic in the codirection $(x,t; dt)$ for all $(x,t) \in M$.
\banum
\item
The Cauchy problem
\eq\label{eq:CauchyProb}
&&\left\{\parbox{60ex}{
$Pf=0$\\
$(f, \dots, \partial_t^{m-1} f)|_{t=0} = (h_0, \dots, h_{m-1})$
}\right.
\eneq
is globally well-posed for hyperfunctions and for analytic functions.
In other words, for any $h=(h_0, \dots, h_{m-1})$ in $\shb(N)^m$ \lp resp.\ $\sha(N)^m$\rp\, there exists a unique $f\in\shb(M)$ \lp resp.\ $f\in\sha(N)$\rp\, solution of~\eqref{eq:CauchyProb}.
\item
The operator $P$ is a surjective endomorphism of $\sect(M; \shb_M)$ and of $\sect(M; \sha_M)$.
\item
Any hyperfunction \lp resp.\ analytic function\rp\, $f$ solution of the equation $Pf = 0$ defined on $N \times (a,b)$ with $-\infty\leq a<b\leq+\infty$ extends uniquely as a hyperfunction \lp resp.\ analytic function\rp\, solution of this equation on $M$.
\eanum
\end{proposition}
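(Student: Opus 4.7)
The plan is to equip $M = N \times \R$ with a G-causal structure whose Cauchy time function is the second projection $\tim(x,t) = t$, and then simply read off (a), (b), (c) from Corollary~\ref{cor:CKGcausal}.

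\textbf{Construction of the cone.} Fix a Riemannian metric $g$ on $N$. By the Kashiwara simplification~\eqref{eq:hyp4K}, the hyperbolicity of $P$ at $(x,t; dt)$ for every $(x,t) \in M$ means that, in a neighborhood of every point, there is an open conic neighborhood of $dt$ in $T^*M$ consisting of hyperbolic codirections. Since $N$ is compact, a standard covering argument applied slicewise yields a continuous positive function $c\cl \R \to \R_{>0}$ such that the closed convex proper cone
\eqn
&&\lambda_{(x,t)} = \{(\eta,\omega) \in T^*_{(x,t)}M ; \omega \geq c(t)\,|\eta|_{g^*}\}
\eneqn
satisfies $\hchv(P) \cap \lambda \subset T^*_M M$ and has $dt$ in its interior. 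Set $\gamma = \Int(\lambda^\circ)$, a nonempty open convex cone in $TM$ containing $\partial_t$; then $(M,\gamma)$ is a causal manifold and its associated $\lambda$ avoids $\hchv(P)$ outside the zero section. Since $T\tim(v,\tau) = \tau > 0$ for $(v,\tau) \in \gamma$, the projection $\tim$ is a submersive causal morphism to $(\R,+)$. Compactness of $N$ makes $\tim^{-1}([a,b]) = N \times [a,b]$ compact for all $a\leq b$, hence $\tim^{-1}([a,b]) \cap \futccc{K}$ is closed in a compact set, hence compact (and similarly for $\pasccc{K}$). By Proposition~\ref{pro:point}, $\tim$ is a Cauchy time function, and $(M,\gamma,\tim)$ is a G-causal manifold.

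\textbf{Identification of the Cauchy data.} For $\shm = \shd_X / \shd_X\cdot P$, the complex $\rhom[\shd_X](\shm,\shb_M)$ is represented by the two-term complex $\shb_M \xrightarrow{P} \shb_M$. Hyperbolicity at $dt$ gives $\sigma(P)(x,t;0,1) \neq 0$, so $P$ has order $m$ in $\partial_t$ and the Cauchy hypersurface $N\times\{0\}$ is non-characteristic in the classical sense. A complexification $Y$ of $N$ inside $X$ is then non-characteristic for $\shm$, and the induced module $\shm_Y$ is isomorphic to $\shd_Y^{\oplus m}$ via the symbol map $(a_0,\dots,a_{m-1}) \mapsto \sum a_j \partial_t^j$; hence $\rhom[\shd_Y](\shm_Y,\shb_N) \simeq \shb_N^{\oplus m}$ in degree $0$, and similarly for $\sha_N$.

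\textbf{Conclusion.} Apply Corollary~\ref{cor:CKGcausal}(b) to obtain the quasi-isomorphism
\eqn
&&\bigl[\shb(M)\xrightarrow{P}\shb(M)\bigr] \isoto \shb(N)^m[0],
\eneqn
where the arrow sends $f$ to its Cauchy data $(f,\dots,\partial_t^{m-1}f)|_{t=0}$. The zeroth cohomology gives the bijection of part~(a), the first cohomology gives $\coker P = 0$ which is part~(b), and the remark following Corollary~\ref{cor:CKGcausal} yields the same statement with $\shb$ replaced by $\sha$. For part~(c), apply Corollary~\ref{cor:CKGcausal}(a) twice: first with the closed past set $A_- = N \times (-\infty,a]$ (if $a > -\infty$), then with the closed future set $A_+ = N \times [b,+\infty)$ (if $b < +\infty$); both satisfy the support hypotheses of the corollary and are invariant under the cc-preorder since $\tim$ is monotonic along it, so any solution on $M \setminus (A_- \cup A_+) = N \times (a,b)$ extends uniquely to $M$.

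\textbf{Main obstacle.} The only substantive technical point is the construction of $\gamma$: extracting from the \emph{pointwise} hyperbolicity of $P$ at $dt$ a globally defined closed convex proper conic neighborhood $\lambda$ of $dt$ avoiding $\hchv(P)$. Compactness of $N$ makes the slicewise construction routine, and allowing $c$ to depend on $t$ removes any obstruction to globalizing along the non-compact factor.
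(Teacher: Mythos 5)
Your overall strategy — equip $M = N\times\R$ with a G-causal structure with $\tim(x,t)=t$ a Cauchy time function, apply Corollary~\ref{cor:CKGcausal}, and identify the Cauchy data via Sp\"ath--Weierstrass — is exactly the paper's. The genuine difference is the choice of cone, and there you have made your life unnecessarily hard. You build a ``fat'' Lorentzian-type cone $\lambda_{(x,t)} = \{\omega \geq c(t)|\eta|_{g^*}\}$ which is a conic \emph{neighborhood} of $dt$, and to arrange $\hchv(P)\cap\lambda\subset T^*_MM$ you must extract a uniform aperture $c(t)$ from the pointwise hyperbolicity — the ``main obstacle'' you flag and then handle by a hand-waved slicewise covering argument. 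The paper instead takes the degenerate product cone $\gamma = TN\times\{v>0\}$, whose polar is the thinnest possible: $\gamma^\circ = T^*_NN\times\{\tau\geq 0\}$, i.e.\ exactly the nonnegative multiples of $dt$. With this choice, $\hchv(P)\cap\gamma^\circ\subset T^*_MM$ is \emph{verbatim} the stated hyperbolicity hypothesis (plus the antipodal invariance of $\hchv$), with zero extraction work, and properness of $\tim$ on causal futures/pasts is immediate from compactness of $N$ since $\futccc{(x,t)} = N\times[t,\infty)$. The lesson: Corollary~\ref{cor:CKGcausal} does not ask that $\lambda$ be a neighborhood of $dt$, only that it be the polar of some open convex cone and be contained in the set of hyperbolic codirections, and the ray $\R_{\geq 0}\,dt$ already qualifies.

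One small imprecision in (c): Corollary~\ref{cor:CKGcausal}(a) concerns sections on $M\setminus A$ for a single future- or past-invariant closed set $A$, so ``apply it twice to $A_-$ and $A_+$'' needs either to run the first application inside the sub-G-causal manifold $N\times(a,\infty)$, or to observe directly that $A_-$ and $A_+$ are disjoint closed, hence $\rsect_{A_-\cup A_+}(M;F)\simeq\rsect_{A_-}(M;F)\oplus\rsect_{A_+}(M;F)\simeq 0$, and then use the distinguished triangle $\rsect_{A_-\cup A_+}(M;F)\to\rsect(M;F)\to\rsect(N\times(a,b);F)\to+1$. The paper is equally terse here, so this is a minor matter. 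Apart from these points your argument is correct.
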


\begin{proof}
(a)-(b)
Consider the open convex cone $\gamma = TN \times \{ (t;v) \in T\R ; v>0 \}$ in $TM$.
The projection on the second factor, $q \colon M \to \R, (x,t) \mapsto t$, is a Cauchy time function on $(M,\gamma)$.
One has $\gamma^\circ = T^*_NN \times \{ (t;\tau) \in T^*\R ; \tau \geq 0 \}$.
Since $(x,t; 0, 1)$ is a hyperbolic codirection for all $(x,t) \in M$, one has $\hchv(P) \cap \gamma^\circ \subset T^*_MM$, so we can apply Corollary~\ref{cor:CKGcausal}~(b).

Since $\RHom[\shd_Y](\shm_Y,\shb_N)\simeq \rsect(N;\shb_N^{m})$ is concentrated in degree $0$, we get that $\Ext[\shd]{1}(\shm,\shb_M)=0$, that is, the operator $P\cl \shb(M)\to\shb(M)$ is surjective and its kernel is isomorphic to $\shb(N)^m$, the isomorphism being given (for example) by $f\mapsto (f, \dots, \partial_t^{m-1} f)|_{t=0}$.
Indeed, an isomorphism $\shm_Y\simeq \shd_X/t\cdot\shd_X+\shd_X\cdot P$ is described as follows: any $R(x,t;\partial_x,\partial_t)\in\shd_X\vert_Y$ may be written uniquely as
\eqn
&&R(x,t;\partial_x,\partial_t)=t\cdot Q + S\cdot P+\sum_{j=0}^{m-1}R_j(x,\partial_x)\cdot \partial_t^j
\eneqn
by the Sp\"ath--Weierstrass division theorem (see~\cites{Ka70,SKK73} for details).

\spa
(c) follows from Corollary~\ref{cor:CKGcausal}~(a).
\end{proof}

\begin{example}\label{exa:loop1}
Let $P$ be a differentoial operator of order $2$ such that
\eq\label{eq:hypforR}
&&\left\{\parbox{70ex}{
\vspace{1.ex}$P = \partial_t^2 - R$,\\
\vspace{1.ex}$ \sigma_2(R)\vert_{T^*_MX}\leq0$,\\
$\sigma_2(R)$ does not depend on $\tau$.
}\right.\eneq
Then $P$ is hyperbolic in the codirections $(x,t;\pm dt)$ for all $(x,t) \in M$.
Indeed, choose a local coordinate system $(z;\zeta)$ on $T^*Y$, $z=x+\sqrt{-1}y$, $\zeta=\xi+\sqrt{-1}\eta$.
Denote by $(t+\sqrt{-1}t';\tau+\sqrt{-1}\tau')$ the coordinates on $T^*\C$.
Hence, $(x,t;\sqrt{-1}\eta,\sqrt{-1}\tau')$ is a local coordinate system on $T^*_MX$.
Then, denoting by $\sigma(P)$ the principal symbol of $P$,
\eqn
\sigma(P)(x,t;\sqrt{-1}\eta,\sqrt{-1}\tau' +\theta)&=&(\theta+\sqrt{-1}\tau')^2-\sigma_2(R)(x,t;\sqrt{-1}\eta)\\
&=&\theta^2-\tau^{\prime 2}+\sigma_2(R)(x,t;\eta)+ 2\sq\theta\tau'
\neq0\mbox{ for }\theta\neq0.
\eneqn
If $(g_t)_{t \in \R}$ is an analytic family of Riemannian metrics on $N$ and $(\Delta_t)_{t \in \R}$ are the associated Laplace--Beltrami operators, then
\begin{equation}
P = \partial_t^2 - \Delta_t
\end{equation}
is such an example.

Being hyperbolic in a given codirection depends only on the top-order part of the operator.
If a differential operator is hyperbolic in a given codirection, then so are its powers.
Therefore, all operators of the form $P^r+Q$ where $Q$ is a differential operator on $M$ of order at most $2r-1$ are also examples.
\end{example}

\begin{remark}
Similar results do not hold in general with the sheaves of distributions or of $C^\infty$-functions.
For example, it is well-known since Hadamard that the Cauchy problem is not well-posed in the space of $C^\infty$-functions on $\R^2$ for the operator $\partial_t^2-\partial_x$.
However, if one assumes that the operator $R$ in Example~\ref{exa:loop2} is elliptic, then the operator $P = \partial_t^2 - R$ is hyperbolic in the classical sense and the Cauchy problem is well-posed in the spaces of $C^\infty$-functions and of distributions on $N \times \R$.
\end{remark}

\begin{example}\label{exa:loop2}
Let us particularize Example~\ref{exa:loop1} to the case $N=\BBS^1$, hence $M=\BBS^1\times\R$.
We define $\gamma,\preceq,\tim$ as above and we denote denote by $x$ a coordinate on $\BBS^1$ (hence, $x+2\pi=x$).
Then the path $[0,2\pi]\ni s\mapsto (s,0)\in M$ is a causal loop.

Consider the differential operator $P(x,t;\partial_x,\partial_t)=\partial_t^2-\partial_x^2$.
The Cauchy problem~\eqref{eq:CauchyProb} (with $m=2$) is globally well-posed in various spaces of functions or generalized functions.
In fact, writing $f(x,t)=f_0(x+t)+f_1(x-t)$, we get
\eqn
&& f_0(x) + f_1(x)=h_0(x),\quad f'_0(x)-f'_1(x)=h_1(x).
\eneqn
Therefore, $2f_0=h_0+\int h_1$ and $2f_1(x)=h_0-\int h_1$.
Note that $\int h_1$ is not necessarily periodic, but replacing $f(x,t)$ with $f(x,t) + (\int_0^{2\pi} h_1) t$, we may assume from the beginning that $\int_0^{2\pi} h_1=0$ so that $\int h_1$ is $2\pi$-periodic.
If one works in the space of real analytic functions or in the space of hyperfunctions, this result is in accordance with Corollary~\ref{cor:CKGcausal}.
\end{example}

\begin{example}\label{exa:loop3}
We consider Example~\ref{exa:loop2} and replace $\R$ with coordinate $t$ with the circle $\R/a\cdot\Z$ for some $a>0$.
Hence, now $M$ is a torus.
Set $S= \R/a\cdot\Z$ and keep the notation of the previous example.
Hence $\tim\cl M\to S$ is a submersive morphism of causal manifolds, but not a time function since $S\neq\R$.
Moreover, the Cauchy problem~\eqref{eq:CauchyProb} (with $m=2$) is not globally well-posed, except for $a=2\pi$.
Note that Theorem~\ref{th:HypCPgl} does not apply since $S$ is not contractible.
\end{example}

\subsubsection{Complex time}

\begin{example}\label{exa:loop1C}
In this example, we treat the case where the time is complex.
For simplicity, we restrict ourselves to an elementary situation.

Let $N$ be a real analytic compact manifold and let $M=N\times\C$.
Let $Y$ be a complexification of $N$.
We denote by $w=t+\sq t'$ the complex coordinate on $\C$ and by $(w;\tau+\sq\tau')$ the coordinates on $T^*\C$.
Let $M=N\times\C$ viewed as a real manifold.
Consider the left ideal $\shi$ and the left $\shd_M$-module $\shm$:
\eqn
&&\shi= \shd_M\cdot P+\shd_M\cdot\ol\partial_w,\quad\shm=\shd_M/\shi,
\eneqn
where $\ol\partial_w$ is the Cauchy--Riemann operator on $\C$ and
\eqn
&&P = \partial_w^2 - R
\eneqn
where $R$ is a differential operator on $M$ (holomorphic in $w$) of order $\leq 2$ whose symbol of order 2 depends neither on $w$ nor on $\tau+\sq\tau'$ and satisfies
\eqn
&&\sigma_2(R)\vert_{T^*_NY} \leq 0.
\eneqn
Then any $(x,w;0,\theta + \sq \theta' )\in T^*M$, $ \theta \neq 0$ is hyperbolic for $\shm$.
Indeed, the system of equations
\eqn
&&\sigma(\ol{\partial}_w) \big( x,w; \sq\eta, (\theta + \sq \theta') + \sq(\tau + \sq \tau') \big) = 0,\\
&&\sigma(P) \big( x,w; \sq\eta, (\theta + \sq \theta') + \sq(\tau + \sq \tau') \big) = 0
\eneqn
is equivalent to
\eqn
(\theta + \sq \tau)^2 - \sigma_2(R)(x; \sq \eta) = 0
\eneqn
which has no solutions for $\theta \neq 0$.
In other words,
\eq\label{eq:hchvExa3}
&&\hchv_M(\shm)\cap (T^*_NN\times T^*\C)\subset T^*_NN\times (\C\times\sq \R).
\eneq
Since the codirections $\sq\theta'$ are not hyperbolic for the system, we cannot apply Theorem~\ref{th:HypCPgl} to solve the Cauchy problem with data on the submanifold $N\times\{0\}$ of $M$ and, indeed, one easily sees that the Cauchy problem
\eqn
&&\left\{\parbox{60ex}{
$\;Pf=0$,\vspace{1ex}\\
$\ol{\partial_w}f=0$,\vspace{1ex}\\
$(f, \partial_w f)|_{w=0} = (h_0,h_{1})$
}\right.
\eneqn
is not well posed.
However, one has propagation results:
\eq\label{eq:Ctimespropag}
&&\left\{
\parbox{75ex}{
Let $\Omega_0$ be an open subset of $\C$ whose intersection with any line $\R+\sq a$, $a \in \R$, is connected and let $\Omega_1=\Omega_0+\R_{\geq0}\times\{0\}$.
Set $U_i=N\times \Omega_i$ ($i=0,1$).
Then one has the restriction isomorphism \\
$\rsect(U_1;\rhom[\shd_X](\shm,\shb_M))\isoto \rsect(U_0;\rhom[\shd_X](\shm,\shb_M))$.
}
\right.
\eneq
To prove~\eqref{eq:Ctimespropag}, one may proceed as follows.
Let $\tim\cl N\times\C\to\C$ denote the projection and set $F=\roim{\tim}\rhom[\shd_X](\shm,\shb_M)$.
Since $q$ is proper, it follows from~\eqref{eq:hchvExa3} and Theorem~\ref{th:hyp3} that $\SSi(F)\subset \C\times\sq\R\subset T^*\C$.
Then the isomorphism $\rsect(\Omega_1;F)\isoto\rsect(\Omega_0;F)$ follows from~\cite{KS90}*{Prop.~5.2.1}.
Indeed, with the notations of loc.\ cit., choose $\gamma=\R_{\leq0}\times \{0\}$ and $U=\Omega_1$.
Then, for any $x\in\Omega_1$, the set $(x+\gamma) \setminus \Omega_0$ is compact.

Of course, one may interchange the cones $\R_{\geq0}\times\{0\}$ and $\R_{\leq0}\times\{0\}$.
For an open subset $\Omega$ of $\C$ whose intersection with any line $\R+\sq a$, $a\in\R$ is connected, denote by $\tw\Omega$ the open set $\Omega+\R\times\{0\}$ and for $U=N\times\Omega$ set $\tw U=N\times\tw\Omega$.
One gets the isomorphism
\eqn
&&\rsect(\tw U;\rhom[\shd_X](\shm,\shb_M))\isoto \rsect(U;\rhom[\shd_X](\shm,\shb_M)).
\eneqn
\end{example}

\subsubsection{Product with a Riemannian hypersurface}

We shall need the following result.

\begin{lemma}[\cite{BEE96}*{Thm~3.66}]\label{lem:compl}
Let $(N,g)$ be a Riemannian manifold and let $f \colon \R \to \R_{>0}$ be a smooth function.
Then the Lorentzian spacetime $(N \times \R, dt^2 - f(t)g)$ is globally hyperbolic if and only if $g$ is complete, and in this case, the projection on the second factor $q \colon N \times \R \to \R$ is a Cauchy time function.
\end{lemma}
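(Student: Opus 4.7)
The plan is to reduce Lorentzian global hyperbolicity of the warped product to metric completeness of $g$ via a simple kinematic inequality. The future-directed causal cone at $(y,t)$ is
\[ \ol{\gamma}_{(y,t)} = \bigl\{ (v,\tau) \in T_yN \times \R \,;\, \tau \geq 0,\ \tau^2 \geq f(t)\, g_y(v,v) \bigr\}, \]
and $Tq$ maps it into $\R_{\geq 0}$, so the projection $q$ is always submersive and causal. By Proposition~\ref{pro:point}, $q$ is a Cauchy time function as soon as it is proper on $\futccc{x_0}$ and $\pasccc{x_0}$ for each $x_0 \in M$, and then Proposition~\ref{pro:Gcausal3} upgrades this to global hyperbolicity. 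The pivotal ingredient is that for a future-directed strictly causal path $c(s) = (\sigma(s),\tau(s))$ one has $\dot\tau > 0$ and $\dot\tau^2 \geq f(\tau)\, g(\dot\sigma,\dot\sigma)$, so reparametrizing by $\tau$ gives $g(d\sigma/d\tau,d\sigma/d\tau) \leq 1/f(\tau)$ and hence
\[ d_g\bigl(\sigma(\tau_0),\sigma(\tau_1)\bigr) \leq \int_{\tau_0}^{\tau_1} f(t)^{-1/2}\, dt. \]

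Assume $g$ is complete. I will define an auxiliary relation $R$ on $M$ by $(y_0,t_0)\, R\, (y_1,t_1)$ iff $t_0 \leq t_1$ and $d_g(y_0,y_1) \leq \int_{t_0}^{t_1} f(t)^{-1/2}\, dt$. Transitivity follows from the triangle inequality for $d_g$ together with additivity of the integral; reflexivity and closedness are immediate. For $x_0 = (y_0,t_0)$, the set $A_{x_0} = \{y_1 : x_0\, R\, y_1\}$ contains every endpoint of a strictly causal path issued from $x_0$ by the kinematic inequality above, and transitivity of $R$ then yields $\futai{y} \subset A_{x_0}$ for every $y \in A_{x_0}$, so $A_{x_0} = \futai{A_{x_0}}$ is a $\gamma$-set. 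By Lemma~\ref{le:gDel1} and its symmetric past analogue, $R$ is a closed causal preorder, hence $\gDelccc \subset R$ by minimality of $\preceqccc$. Thus $\futccc{x_0} \cap q^{-1}([a,b])$ sits inside $\ol{B_g(y_0,D)} \times [\max(a,t_0),b]$ for a finite $D$; Hopf--Rinow (completeness) makes this compact, and the intersection, being closed by Proposition~\ref{pro:proper}, is compact. The case of the past is symmetric.

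For the converse, suppose $g$ is not complete. By Hopf--Rinow I pick $y_0 \in N$ and $r > 0$ with $\ol{B_g(y_0,r)}$ non-compact, and a sequence $(y_n)$ in this ball with no convergent subsequence. Choose $t_1 > 0$ such that $D \eqdot \int_0^{t_1} f(t)^{-1/2}\, dt > r$ and $\int_{t_1}^{2t_1} f(t)^{-1/2}\, dt > r$. For each $n$, I select a piecewise smooth $g$-arc-length-parametrized path $\beta_n \colon [0,L_n] \to N$ from $y_0$ to $y_n$ with $L_n < r$, and lift it to $c_n \colon [0,t_1] \to M$ via $c_n(s) = (\beta_n(\alpha_n(s)),s)$ with $\alpha_n(s) = (L_n/D) \int_0^s f(t)^{-1/2}\, dt$; the identity $f(s)\,\alpha_n'(s)^2 = (L_n/D)^2 < 1$ makes $c_n$ strictly causal, running from $(y_0,0)$ to $(y_n,t_1)$. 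A symmetric construction on $[t_1,2t_1]$ yields a strictly causal path back to $(y_0,2t_1)$. Using the inclusion $\gDelps \subset \gDelccc$ valid in Lorentzian spacetimes, each $(y_n,t_1)$ lies in $\futccc{(y_0,0)} \cap \pasccc{(y_0,2t_1)}$, a causal diamond that is therefore non-compact, contradicting global hyperbolicity. The hard part is precisely this lifting step: minimizing geodesics may fail to exist in the incomplete setting, but the arc-length reparametrization above circumvents the issue by using almost-minimizing piecewise smooth curves instead.
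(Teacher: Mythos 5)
The paper gives no proof for this lemma and simply attributes it to [BEE96, Thm~3.66], so your argument is an independent proof rather than a reconstruction of the paper's. Your proof of the implication (completeness $\Rightarrow$ global hyperbolicity and $q$ Cauchy) is correct and sits very naturally inside the paper's framework: the auxiliary relation $R$ gives a closed causal preorder that dominates $\gDelccc$, whence $\futccc{x_0}\cap\opb{q}([a,b])$ is a closed subset of a metric ball times an interval, and Hopf--Rinow together with Propositions~\ref{pro:proper}, \ref{pro:point} and \ref{pro:Gcausal3} finishes. The kinematic bound and the lifting construction are both sound.

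There is, however, a gap in the converse direction, in the order of quantifiers. You fix $y_0$ and $r$ first (from a bad ball supplied by Hopf--Rinow) and only then ask for $t_1$ satisfying $\int_0^{t_1} f^{-1/2}\,dt > r$ and $\int_{t_1}^{2t_1} f^{-1/2}\,dt > r$. Such a $t_1$ need not exist: the hypotheses put no growth restriction on $f$, so $\int_{-\infty}^{\infty} f(t)^{-1/2}\,dt$ can be finite (for instance $f(t)=e^{t^2}$), making both integrals bounded above by a constant that could well be smaller than $r$. The remedy is to reverse the choices: first fix $t_1>0$ and set $r < \min\bigl(\int_0^{t_1} f^{-1/2}\,dt,\;\int_{t_1}^{2t_1} f^{-1/2}\,dt\bigr)$; then use the fact that, in a metrically incomplete Riemannian manifold, a non-convergent Cauchy sequence supplies, for \emph{every} radius $\epsilon>0$, a point $y_\ast$ with $\ol{B_g(y_\ast,\epsilon)}$ non-compact, and take $y_0 = y_\ast$ for $\epsilon = r$. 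With this reordering your construction goes through unchanged. (A small additional point worth making explicit: take the sequence $(y_n)$ in the open ball $B_g(y_0,r)$, so that the lengths $L_n$ can indeed be chosen with $L_n < r < D$, as your argument needs $L_n/D<1$ strictly.)
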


\begin{example}\label{exa:loop4}
Let $N$ be a real analytic manifold and set $M = N\times\R$ as in Example~\ref{exa:loop1} but now, {\em we do not assume any more that $N$ is compact}.

We still denote by $t$ a coordinate on $\R$, by $(t;w)$ the coordinates on $T\R$ and by $(t;\tau)$ the coordinates on $T^*\R$
and we still consider a differential operator $P = \partial_t^2 - R$ as in~\eqref{eq:hypforR}.
Since $N$ is no more assumed to be compact, we need another hypothesis:
\eq\label{eq:hypR8}
&&\parbox{65ex}{there exist a smooth function $f\cl\R\to\R_{>0}$ and a smooth complete Riemannian metric $g$ on $N$ such that $\sigma_2(R)(x,t;\xi)\leq f(t)\vert\xi\vert_{g_x}^2$.
}\eneq
Note that this condition is automatically satisfied if $N$ is compact.
We want to solve the homogeneous Cauchy problem~\eqref{eq:CauchyProb} for hyperfunctions (or analytic functions).
If we chose $\gamma$ as in the proof of Proposition~\ref{pro:wave2}, then the projection on the second factor would not be proper on the sets $\futo{x}$ for any $x \in M$.
We set
\begin{equation}
\gamma = \{(x,t;v,w)\in TM; w > 1/(2 f(t)) |v|_g\}.
\end{equation}

One has $\gamma^\circ = \{(x,t;\xi,\tau)\in T^*M; \tau \geq 2 f(t) |\xi|_g\}$.
By  Lemma~\ref{lem:quadra0}, $\hchv(P) \cap \gamma^\circ \subset T^*_MM$.

One checks that $\gamma$ is the future cone of the Lorentzian spacetime $(M,dt^2-(1/2f(t)) g)$, which is globally hyperbolic by Lemma~\ref{lem:compl}.
Therefore, $(M,\gamma,q)$ is a G-causal manifold and we can apply Corollary~\ref{cor:CKGcausal} which asserts that the Cauchy problem~\eqref{eq:CauchyProb} for hyperfunctions (or analytic functions) is globally well-posed.

As in the compact case, if $(g_t)_{t \in \R}$ is an analytic family of \emph{complete} Riemannian metrics on $N$ and $(\Delta_t)_{t \in \R}$ are the associated Laplace--Beltrami operators, then the operator $P = \partial_t^2 - \Delta_t$ is such an example.
\end{example}

\subsubsection{The wave operator on a globally hyperbolic Lorentzian spacetime}

\begin{definition}
Let $(M,g)$ be a real analytic Lorentzian manifold.
The \define{wave operator} is defined by
\begin{equation}
\square = - \operatorname{div} \grad.
\end{equation}
A \define{wave-type operator} on $(M,g)$ is a differential operator $P$ whose symbol satisfies
\begin{equation}
\sigma(P)(x; \xi) = |\xi|_{g_x}^2
\end{equation}
for $(x;\xi) \in T^*M$.
\end{definition}

A standard calculation shows that the wave operator on a Lorentzian manifold is a wave-type operator on that Lorentzian manifold.

\begin{lemma}\label{lem:hypchar}
Let $(M,g)$ be a Lorentzian spacetime and let $P$ be a wave-type operator, then $\hchv(P) \cap \Int(\gamma_g^\circ) \subset T^*_MM$.
\end{lemma}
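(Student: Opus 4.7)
The plan is to apply the simplified local characterization of hyperbolicity given in~\eqref{eq:hyp4K}: a codirection $(x_0;\theta_0)$ with $\theta_0 \neq 0$ is hyperbolic for $P$ if and only if, in some coordinate neighborhood $U$ of $x_0$, the principal symbol $\sigma(P)(x;\theta_0+\sq\eta)$ does not vanish for any $x\in U$ and $\eta\in\R^n$. Since points of $T^*_MM$ automatically lie outside $\hchv(P)$ only matter via $\theta_0=0$, we may and do assume $\theta_0\neq0$.

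First, I would translate the algebraic data. For a wave-type operator, $\sigma(P)(x;\xi)=|\xi|_{g_x}^2$ equals $Q_x^\sharp(\xi)$, where $Q=g$ and $Q^\sharp$ is the induced quadratic form on $T^*M$; it has the same signature $(+,-,\dots,-)$ as $g$, so at each point it has exactly one positive eigenvalue. Complexifying in a chart, $\sigma(P)(x;\theta_0+\sq\eta)=Q_x^\sharp(\theta_0+\sq\eta)$.

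Next, I would use Lemma~\ref{lem:quadra}(b)(iii), which asserts $\Int(\gamma_g^\circ)=(\gamma_g)^\sharp$: the assumption $(x_0;\theta_0)\in\Int(\gamma_g^\circ)$ with $\theta_0\neq0$ means $\theta_0=v^\sharp$ for some $v\in(\gamma_g)_{x_0}$, and in particular $Q_{x_0}^\sharp(\theta_0)=Q_{x_0}(v)>0$. Since $\Int(\gamma_g^\circ)$ is open, after fixing a coordinate chart we find a neighborhood $U$ of $x_0$ such that, for every $x\in U$, the fixed covector $\theta_0$ still lies in $\Int(\gamma_g^\circ)_x$, so $Q_x^\sharp(\theta_0)>0$. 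Now Lemma~\ref{lem:quadra0}(a), applied fiberwise to $Q_x^\sharp$ (which has exactly one positive eigenvalue), yields
\[
Q_x^\sharp(\theta_0+\sq\eta)\neq 0\qquad\text{for all }x\in U,\ \eta\in\R^n,
\]
which is exactly the condition~\eqref{eq:hyp4K}. Hence $(x_0;\theta_0)$ is hyperbolic for $P$, i.e.\ $(x_0;\theta_0)\notin\hchv(P)$.

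There is essentially no obstacle here; the proof is a straightforward chain of three identifications (wave-type $\Leftrightarrow$ $Q^\sharp$, $\Int(\gamma_g^\circ)\Leftrightarrow Q^\sharp>0$ via Lemma~\ref{lem:quadra}, hyperbolicity $\Leftrightarrow$ nonvanishing on the Bochner tube via Lemma~\ref{lem:quadra0}(a) and~\eqref{eq:hyp4K}). The only point that warrants a line of care is verifying that the same $\theta_0$ stays in $\Int(\gamma_g^\circ)_x$ as $x$ ranges over a neighborhood of $x_0$, but this is immediate from openness after trivializing $T^*M$ in a chart.
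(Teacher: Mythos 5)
Your proof is correct and follows the same route as the paper, whose entire proof is the single line ``This follows directly from Lemma~\ref{lem:quadra0}.'' You have usefully filled in the implicit intermediate steps: the reduction to the characterization~\eqref{eq:hyp4K}, the identification of $\sigma(P)$ with $Q^\sharp$, and the use of Lemma~\ref{lem:quadra}~(b)(iii) to get $\Int(\gamma_g^\circ)=(\gamma_g)^\sharp$ and hence $Q_x^\sharp(\theta_0)>0$ near $x_0$, after which Lemma~\ref{lem:quadra0}~(a) gives the nonvanishing on the Bochner tube.
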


\begin{proof}
This follows directly from Lemma~\ref{lem:quadra0}.
\end{proof}

\begin{theorem}\label{th:wave1}
Let $(M,g)$ be a real analytic globally hyperbolic Lorentzian spacetime and let $P$ be a wave-type operator on $M$.
Let $N \subset M$ be a real analytic Cauchy hypersurface and let $v$ be an analytic vector field defined in a neighborhood of $N$ and transversal to $N$.
Then the Cauchy problem
\eq\label{eq:CauchyProb2}
&&\left\{\parbox{60ex}{
$Pf = 0$\\
$(f|_N, v (f)\vert_N)= (h_0, h_1)$
}\right.
\eneq
with $h_0, h_1 \in \shb(N)$ \lp resp.\ $\sha(N)$\rp\ has a unique global solution in $\shb(M)$ \lp resp.\ $\sha(M)$\rp.

Furthermore, the operator $P$ is a surjective endomorphism of $\shb(M)$ and of $\sha(M)$.
\end{theorem}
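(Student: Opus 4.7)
The plan is to reduce the theorem to Corollary~\ref{cor:CKGcausal} applied to the coherent left $\shd_X$-module $\shm=\shd_X/\shd_X\cdot P$. The main technical mismatch is that Lemma~\ref{lem:hypchar} only controls $\hchv(P)$ on the \emph{interior} $\Int(\gamma_g^\circ)$ of the polar of the Lorentzian future cone, while Corollary~\ref{cor:CKGcausal} requires the corresponding inclusion on the \emph{closed} polar cone; the obstruction is exactly the null covectors on $\partial\gamma_g^\circ$, which are characteristic for any wave-type operator.

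To get around this, I would first invoke the refined Bernal--S\'anchez theorem (\cite{FS11}, strengthening Theorem~\ref{th:GBS}) to produce a Cauchy time function $q\cl M\to\R$ with $q^{-1}(0)=N$ and with $\nabla^g q$ everywhere $g$-timelike, equivalently an orthogonal splitting $g=\beta\,dq\otimes dq-h$ on $M\cong N\times\R$, with $\beta>0$ and $h$ a $q$-dependent complete Riemannian metric on $N$. I would then replace $g$ by the slightly wider Lorentzian metric $g_1=\beta\,dq\otimes dq-(1-\varepsilon)h$ for $\varepsilon>0$ small, and set $\tilde\gamma=\gamma_{g_1}$ and $\tilde\lambda=\tilde\gamma^\circ$. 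A direct linear-algebra check (in adapted coordinates) shows that $\overline{\gamma_g}$ minus the zero section is contained in $\tilde\gamma$, and dually that $\tilde\lambda$ minus the zero section is contained in $\Int(\gamma_g^\circ)$. Combined with Lemma~\ref{lem:hypchar}, this yields the desired inclusion $\hchv(P)\cap\tilde\lambda\subset T^*_MM$. Verifying that $(M,\tilde\gamma,q)$ remains a G-causal manifold is, I expect, the \emph{main obstacle}: submersivity and strict causality of $q$ are automatic since $dq\in\Int(\tilde\lambda)$, but the properness of $q$ on the cc-future and cc-past of each point has to be re-established for the wider cone. It should follow from the product splitting together with global hyperbolicity of $(M,g)$, since any $g_1$-causal curve has spatial displacement controlled by its $q$-displacement up to a factor of $(1-\varepsilon)^{-1/2}$, which is enough to bound it inside the compact causal diamonds of $g$ enlarged by a bounded amount.

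Once this is in place, Corollary~\ref{cor:CKGcausal}~(b) applied to $(M,\tilde\gamma,q)$ and $\shm$ yields the isomorphism
\eqn
&&\RHom[\shd_X](\shm,\shb_M)\isoto\RHom[\shd_Y](\shm_Y,\shb_N).
\eneqn
By the Sp\"ath--Weierstrass division theorem, as in the proof of Proposition~\ref{pro:wave2}, the analytic transversal vector field $v$ identifies $\shm_Y$ with $\shd_Y^{\,2}$ in such a way that the restriction morphism becomes $f\mapsto(f\vert_N,v(f)\vert_N)$; since $\shb_N$ is flabby, the right-hand side is concentrated in degree $0$ with global sections $\shb(N)^2$. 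Applying $\rsect(M;\scdot)$ to the short exact sequence $0\to\shd_X\to[\cdot P]\shd_X\to\shm\to0$ then yields both the surjectivity of $P$ on $\shb(M)$ (from the vanishing of $\Ext[\shd_X]{1}(\shm,\shb_M)$) and the well-posedness~\eqref{eq:CauchyProb2} of the Cauchy problem (from the identification of $\Hom[\shd_X](\shm,\shb_M)$ with $\shb(N)^2$ via $(f\vert_N,v(f)\vert_N)$). The analytic case is identical, with $\sha$ in place of $\shb$ and using $\Gamma$-acyclicity instead of flabbiness.
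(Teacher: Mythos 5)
Your overall strategy matches the paper's: widen the light cone $\gamma_g$ to some $\tilde\gamma$ so that the closed polar $\tilde\lambda=\tilde\gamma^\circ$ sits inside $\Int(\gamma_g^\circ)\cup T^*_MM$; then Lemma~\ref{lem:hypchar} gives $\hchv(P)\cap\tilde\lambda\subset T^*_MM$, Corollary~\ref{cor:CKGcausal} applies to $(M,\tilde\gamma,q)$, and the identification of $\shm_Y$ with $\shd_Y^{\,2}$ via Sp\"ath--Weierstrass together with flabbiness (resp.\ $\Gamma$-acyclicity) finishes the argument exactly as in Proposition~\ref{pro:wave2}. The linear-algebra check that $\ol{\gamma_g}\setminus T_MM\subset\tilde\gamma$ dualizes to $\tilde\lambda\setminus T^*_MM\subset\Int(\gamma_g^\circ)$, and the final reduction, are both correct.

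The gap you flag is a genuine one, and it is precisely the point the paper does not reprove: it invokes in one sentence the stability of global hyperbolicity under widening of the light cones, which yields a globally hyperbolic $\tilde g$ with $\ol{\gamma_g}\subset\tilde\gamma\cup\{0\}$ for which $N$ remains a Cauchy hypersurface and hence supports a Cauchy time function $q$ with $q^{-1}(0)=N$. Your alternative --- build $g_1=\beta\,dq\otimes dq-(1-\varepsilon)h$ from a Bernal--S\'anchez splitting and re-establish properness of $q$ directly --- is attempting to reprove this stability theorem, and the sketch does not close it. The estimate $|\dot c|_{h_q}\leq\sqrt{\beta/(1-\varepsilon)}$ gives no uniform control on the spatial excursion of a $g_1$-causal curve because $\beta$ need not be bounded over the relevant $q$-interval and $h_q$ is a $q$-dependent Riemannian metric; thus ``the compact $g$-diamond enlarged by a bounded amount'' is not manifestly relatively compact, and making it so is essentially the content of the stability result (Benavides Navarro--Minguzzi, or a careful Geroch-type exhaustion). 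You should either cite that result, as the paper does, or carry out its proof in full; the rest of your argument then goes through unchanged.
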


\begin{proof}
Since global hyperbolicity is a stable property, there exists a Lorentzian metric $\tilde{g}$ on $M$ such that $(M,\tilde{g})$ is globally hyperbolic and $\ol{\gamma} \subset \tilde{\gamma} \cup \{0\}$, so $\tilde{\gamma}^\circ \subset \Int(\gamma^\circ) \cup \{0\}$.
By Lemma~\ref{lem:hypchar}, $\hchv(P) \cap \tilde{\gamma}^\circ \subset \hchv(P) \cap (\Int(\gamma^\circ) \cup T^*_MM) \subset T^*_MM$.

Let $q$ be a Cauchy time function such that $q^{-1}(0) = N$.
We apply Corollary~\ref{cor:CKGcausal} to the G-causal manifold $(M,\tilde{\gamma},q)$ and the $\shd_X$-module $\shd_X / \shd_X P$.
Then the proof goes as for Proposition~\ref{pro:wave2}.
\end{proof}

One shall notice that in Theorem~\ref{th:wave1}, there is no assumption that the initial data be compactly supported.

\appendix
\section{Appendix: normal cones}
References are made to~\cite{KS90}.

Let $A,B$ be two subsets of $M$.
The \define{Whitney cone} $C(A,B)$ (see~\cite{KS90}*{Def.~4.1.1}) is a closed conic subset of $TM$.
In a chart at $x_0\in M$, it is described as follows.
\eq\label{eq:normalC}
&&v\in C_{x_0}(A,B)\subset T_{x_0}M\Leftrightarrow\left\{\parbox{45ex}{
there exists a sequence $\{(x_n,y_n,\lambda_n)\}_n\subset A\times B\times\R_{>0}$ such that\\
$x_n\to[n]x_0$, $y_n\to[n]x_0$, $\lambda_n(x_n-y_n)\to[n]v$.
}\right.
\eneq
For short, we define $C_x(A)=C_x(A,\{x\})\subset T_xM$ and often identify it with $C(A, \{x\}) \subset TM$.
This is a closed cone of $T_xM$, the set of limits when $y\in A$ goes to $x$ of half-lines issued at $x$ and passing through $y$.
More generally, for $N$ a smooth submanifold of $M$, the set $C(A,N)$ satisfies $N \times_M C(A,N) +TN = N \times_M C(A,N)$ and one denotes by $C_N(A)$ the image of $N \times_M C(A,N)$ in $T_NM = (N \times_M TM)/TN$.

Let us recall without proof some elementary properties of Whitney cones that we will need later.

\begin{proposition}\label{prop:cones0}
Let $L,M,N$ be manifolds and let $g\cl L\to M$ and $f\cl M\to N$ be morphisms of manifolds.
Let $A, A_1, A_2, B \subset M$.
Then
\bnum
\item the Whitney cone $C(A, B) \subset TM$ is a closed cone,
\item $C(A, B) = - C(B, A)$,
\item if $A_1 \subset A_2$, then $C(A_1, B) \subset C(A_2, B)$,
\item $C(A_1 \cup A_2, B) = C(A_1, B) \cup C(A_2, B)$,
\item $C(\clos{A}, B) = C(A, B)$,
\item $C(A,B)\cap T_MM = \clos{A} \cap \clos{B}$,
\item $C(g^{-1}(A),g^{-1}(B)) \subset \opb{(Tg)}C(A,B)$,
\item $Tf(C(A,B))\subset C(f(A),f(B))$.
\enum
\end{proposition}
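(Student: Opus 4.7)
The plan is as follows. Every item is local in $M$, so I fix a chart near a point $x_0$ and argue directly from the sequential characterization~\eqref{eq:normalC}, identifying $T_{x_0}M$ with $\R^d$. Items (i)--(vi) are purely formal manipulations of the defining sequences; the geometric content sits in (viii), from which (vii) follows.

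I would first dispose of (ii)--(iv): (ii) is the swap $(x_n,y_n) \leftrightarrow (y_n,x_n)$, which sends a realization of $v$ to a realization of $-v$; (iii) is tautological since any sequence in $A_1$ is a sequence in $A_2$; (iv) is $\supset$ from (iii), and $\subset$ by extracting from $x_n \in A_1 \cup A_2$ an infinite subsequence lying entirely in some $A_i$. For (i), the cone property follows by the rescaling $\lambda_n \mapsto c\lambda_n$ for $c > 0$, and closedness at each fiber is a diagonal extraction: given $v^{(k)} \in C_{x_0}(A,B)$ with $v^{(k)} \to v$, pick for each $k$ a triple $(x_k,y_k,\lambda_k) \in A \times B \times \R_{>0}$ satisfying $\|x_k-x_0\|+\|y_k-x_0\|<1/k$ and $\|\lambda_k(x_k-y_k)-v^{(k)}\|<1/k$; this triple realizes $v$. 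For (v), $\supset$ is (iii), and for $\subset$ one perturbs: given $(x_n,y_n,\lambda_n) \in \ol{A} \times B \times \R_{>0}$ realizing $v$, choose $x_n' \in A$ with $\|x_n'-x_n\| \leq 1/(n(1+\lambda_n))$, so $(x_n',y_n,\lambda_n)$ still realizes $v$. Item (vi) follows since $0 \in C_{x_0}(A,B)$ if and only if there exist $A\ni x_n\to x_0$ and $B\ni y_n\to x_0$ (take $\lambda_n = 1$), equivalently $x_0 \in \ol{A}\cap\ol{B}$.

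For (viii), let $v \in C_{x_0}(A,B)$ be realized by $(x_n,y_n,\lambda_n)$. Working in charts near $x_0$ and $f(x_0)$, Taylor's formula gives
\[
f(x_n)-f(y_n) = Df(y_n)(x_n-y_n) + R_n,
\qquad
\|R_n\| \leq \varepsilon_n\|x_n-y_n\|,
\]
with $\varepsilon_n \to 0$. Since $\lambda_n\|x_n-y_n\|$ converges to $\|v\|$ and is therefore bounded, multiplying by $\lambda_n$ yields $\lambda_n(f(x_n)-f(y_n)) \to Df(x_0)v = Tf(v)$, whence $Tf(v) \in C_{f(x_0)}(f(A),f(B))$. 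Item (vii) then drops out of (viii) and (iii): if $w \in C_{x_0}(\opb{g}A,\opb{g}B)$, then by (viii) $Tg(w) \in C(g(\opb{g}A), g(\opb{g}B))$, and since $g(\opb{g}A) \subset A$ and $g(\opb{g}B) \subset B$, (iii) gives $Tg(w) \in C(A,B)$, i.e.\ $w \in \opb{(Tg)}C(A,B)$.

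The only step with genuine analytic content is the Taylor estimate in (viii), which is standard; all remaining items are sequential bookkeeping, and I do not foresee any substantive obstacle.
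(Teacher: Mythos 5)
The paper states this proposition \emph{without proof} (``Let us recall without proof some elementary properties of Whitney cones that we will need later''), so there is no paper argument to compare against; I am evaluating your proof on its own merits. Your argument is correct and is the natural elementary verification from the sequential characterization~\eqref{eq:normalC}, with the right reduction order (in particular, deriving (vii) from (viii) and (iii) via $g(\opb{g}A)\subset A$ is clean and avoids a separate Taylor estimate for $g$).

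One small imprecision in item (i): you prove only \emph{fiberwise} closedness, fixing the base point $x_0$ and taking $v^{(k)}\to v$ in $T_{x_0}M$. The statement is that $C(A,B)$ is closed in $TM$, so you must allow a sequence $(x^{(k)},v^{(k)})\in C(A,B)$ with $x^{(k)}\to x_0$ as well; a fiberwise-closed subset of a bundle need not be closed in the total space. The repair is immediate and uses exactly your extraction: for each $k$ choose $(a_k,b_k,\lambda_k)\in A\times B\times\R_{>0}$ with $\|a_k-x^{(k)}\|+\|b_k-x^{(k)}\|<1/k$ and $\|\lambda_k(a_k-b_k)-v^{(k)}\|<1/k$; then $a_k,b_k\to x_0$ and $\lambda_k(a_k-b_k)\to v$, so $(x_0,v)\in C(A,B)$. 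Apart from this wording issue, the proof is sound; the Taylor estimate in (viii) is justified since $x_n,y_n\to x_0$ keeps the segment $[y_n,x_n]$ in a region where $Df$ is uniformly close to $Df(x_0)$, making $\varepsilon_n\to 0$ legitimate.
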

Let $A$ be a subset of $M$.
Recall~(\cite{KS90}*{Def.~5.3.6}) that the \define{strict normal cone} of $A$ is the set
\eq\label{eq:strictNC}
&&N(A) = TM \setminus C(M \setminus A, A).
\eneq
This is an open convex cone of $TM$ (see below).
In a chart at $x \in M$, one has:
\eq\label{eq:strictNC2}
&&(x,v)\in N(A)\Leftrightarrow\left\{\parbox{40ex}{there exists an open cone $\gamma_0$ with $v\in\gamma_0$ and an open neighborhood $U$ of $x$ such that $U\cap(U\cap A+\gamma_0)\subset A$.
}\right.
\eneq
As usual, for $x \in M$, one sets $N_x(A)= T_xM \cap N(A)$.

\begin{proposition}\label{pro:NinC}
Let $A\subset M$ and let $x \in \ol{A}$.
Then $\ol{N_x(A)}\subset C_x(A)$.
\end{proposition}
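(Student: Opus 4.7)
The plan is to prove the stronger statement $N_x(A)\subset C_x(A)$; since $C_x(A)$ is closed in $T_xM$ (by Proposition~\ref{prop:cones0}~(i)), taking closures will then yield the result.

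Fix $v\in N_x(A)$ and work in a chart at $x$. By the characterization~\eqref{eq:strictNC2}, there exist an open cone $\gamma_0\subset T_xM$ with $v\in\gamma_0$ and an open neighborhood $U$ of $x$ such that
\[
U\cap(U\cap A+\gamma_0)\subset A.
\]
Since $x\in\ol{A}$, I can pick a sequence $(y_n)_n$ in $A\cap U$ converging to $x$. The idea is to perturb $y_n$ in the direction $v$ by an amount $t_n>0$ that goes to zero, but much more slowly than $\|y_n-x\|$, so that the resulting point lies in $A$ (by the displayed inclusion) and its normalized displacement from $x$ converges to $v$.

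More precisely, choose positive reals $t_n\to 0$ with $\|y_n-x\|/t_n\to 0$, for instance $t_n=\max\bigl(\sqrt{\|y_n-x\|},1/n\bigr)$. For $n$ large enough we have $y_n+t_nv\in U$, because $y_n\to x$ and $t_nv\to 0$. Since $\gamma_0$ is a cone and $v\in\gamma_0$, one has $t_nv\in\gamma_0$, so
\[
z_n\eqdot y_n+t_nv\;\in\;U\cap(U\cap A+\gamma_0)\;\subset\;A.
\]
Moreover $z_n\to x$ and
\[
\frac{1}{t_n}(z_n-x)=\frac{y_n-x}{t_n}+v\;\longrightarrow\;v,
\]
which by~\eqref{eq:normalC} means $v\in C_x(A,\{x\})=C_x(A)$. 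The case $v=0$ is already covered by Proposition~\ref{prop:cones0}~(vi) (since $x\in\ol{A}\cap\ol{\{x\}}$), or alternatively by taking $z_n=y_n$ directly.

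The only nontrivial step is the choice of $t_n$: one must scale the perturbation so that it dominates $\|y_n-x\|$ (to kill the spurious term $(y_n-x)/t_n$ in the limit) yet still tends to zero (so that $z_n\to x$ and $z_n\in U$). This is the content of the auxiliary sequence chosen above, after which everything follows directly from the definitions.
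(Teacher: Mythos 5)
Your proof is correct and follows essentially the same route as the paper's: reduce to $N_x(A)\subset C_x(A)$ using closedness of $C_x(A)$, pick a sequence in $A$ converging to $x$, and translate it in the direction $v$ by an amount tending to zero slower than the distance to $x$, using the strict-normal-cone inclusion to stay inside $A$. Your $(y_n,t_n,z_n)$ correspond to the paper's $(x_n,c_n^{-1},y_n)$ with $c_n\to+\infty$ and $c_n(x-x_n)\to 0$; the only difference is that you spell out a concrete choice of $t_n$ and treat $v=0$ separately (which is indeed prudent, since $0\notin\gamma_0$ for an open cone $\gamma_0$).
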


\begin{proof}
Let us choose a chart at $x$.
Since $C_x(A)$ is closed, it is enough to check the inclusion $N_x(A) \subset C_x(A)$.
Let $v \in N_x(A)$.
There is a neighborhood $U$ of $x$ and a conic neighborhood $\gamma_0$ of $v$ such that $U \cap (U \cap A + \gamma_0) \subset A$.
Since $x \in \ol{A}$, there is a sequence $x_n \to x$ with $x_n \in A$.
Let $c_n>0$ be a sequence with $c_n\to[n]+\infty$ and $c_n(x-x_n)\to[n]0$.
Set $y_n=x_n+ \opb{c_n}v$.
Then for $n$ large enough, $y_n \in A$ and
\eqn
&&c_n(y_n-x) = c_n (y_n - x_n) + c_n (x_n - x) = v + c_n (x_n - x) \to[n] v.
\eneqn
\end{proof}

For short, we set
\eq\label{eq:outgNC}
&&D(A) = C(M \setminus A, A)
\eneq
and we call $D(A)$ the \define{cone of outgoing vectors of $A$}.
Most of the following properties are direct consequences of the corresponding properties of the Whitney cone.
We gather them in a proposition for later reference.

\begin{proposition}\label{prop:cones}
Let $L,M,N$ be manifolds and let $g\cl L\to M$ be a morphism of manifolds.
Let $A, A_1, A_2 \subset M$ and $B\subset N$.
Then
\bnum
\item
$N(A)$ is an open convex cone,
\item
$N(M \setminus A ) = N(A)^a$,
\item
$N(\varnothing) = N(M) = TM$,
\item
$N_x(A)=T_xM$ if and only if $x\notin\partial A$,
\item
$N(A_1) \cap N(A_2)\subset N(A_1 \cup A_2)$
and $N(A_1) \cap N(A_2)\subset N(A_1 \cap A_2)$,
\item
$N(A) \subset N(\clos{A})$ and $N(A) \subset N(\Int{A})$,
\item
if $N_x(A) \neq \varnothing$ for all $x \in \partial A$, then $\clos{\Int{A}} = \clos{A}$ and $\Int{\clos{A}} = \Int{A}$,
\item
if $\clos{A_1} = \clos{A_2}$ and $\Int{A_1} = \Int{A_2}$, then $N(A_1) = N(A_2)$,
\item
$\opb{(Tg)}(N(A))\subset N(\opb{g}(A))$,
\item
$N(A)\times N(B)\subset N(A\times B)$.
\enum
\end{proposition}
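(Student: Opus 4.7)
The plan is to derive each assertion from the definition $N(A) = TM \setminus C(M \setminus A, A)$, the local characterization~\eqref{eq:strictNC2}, and the corresponding properties of the Whitney cone gathered in Proposition~\ref{prop:cones0}. Items~(i)--(iv) come almost for free. Openness of $N(A)$ follows from closedness of $C$ in Prop.~\ref{prop:cones0}~(i); convexity is a routine (if fiddly) local check that given witnesses $(U_i, \gamma_{0,i})$ for $v_1, v_2 \in N_x(A)$, one can produce a common witness $(U, \gamma_0)$ for $v_1 + v_2$ by shrinking $U$ to a small ball, choosing $\gamma_0 \subset \gamma_{0,1} + \gamma_{0,2}$ with $v_1 + v_2 \in \gamma_0$, and, for $x' \in U \cap A$ and $v' = v'_1 + v'_2 \in \gamma_0$ with $x'+v' \in U$, applying the $v_1$- and $v_2$-witnesses in succession. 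Item~(ii) follows from $C(B,A) = -C(A,B)$; (iii) is trivial; for (iv), if $x \notin \partial A$ then a neighborhood of $x$ is contained in $A$ or in $M \setminus A$, making~\eqref{eq:strictNC2} valid for every $v \in T_xM$, whereas if $x \in \partial A$ one takes sequences $x_n \in A$ and $y_n \in M \setminus A$ with $x_n, y_n \to x$ and $\lambda_n = 1$, giving $0 \in C_x(M \setminus A, A)$, hence $0 \notin N_x(A)$.

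Items~(v), (vi), (viii), (ix), (x) reduce formally to properties of $C$ via complementation and products. For (v), use $M \setminus (A_1 \cup A_2) = (M \setminus A_1) \cap (M \setminus A_2)$ together with property~(iv) of Prop.~\ref{prop:cones0} (applied to the second argument) and the monotonicity~(iii); the intersection case then follows from the union case via (ii). For (vi), Prop.~\ref{prop:cones0}~(v) yields $C(M \setminus \clos{A}, \clos{A}) = C(M \setminus \clos{A}, A) \subset C(M \setminus A, A)$, and the interior version follows by complementation using (ii). Item~(viii) holds since the hypotheses imply $\clos{M \setminus A_1} = \clos{M \setminus A_2}$, and $C$ depends only on closures by Prop.~\ref{prop:cones0}~(v). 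Item~(ix) follows from Prop.~\ref{prop:cones0}~(vii) applied with $\opb{g}(M \setminus A) = L \setminus \opb{g}(A)$. For (x), given $(v, w) \in N_x(A) \times N_y(B)$ with witnesses $(U, \gamma_1)$ and $(V, \gamma_2)$, I verify that $(U \times V, \gamma_1 \times \gamma_2)$ witnesses $(v, w) \in N_{(x,y)}(A \times B)$ via the elementary identity
\[
(U \times V) \cap \bl (U \cap A) \times (V \cap B) + \gamma_1 \times \gamma_2 \br = \bl U \cap (U \cap A + \gamma_1) \br \times \bl V \cap (V \cap B + \gamma_2) \br.
\]

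The main obstacle is (vii), which requires a genuine geometric argument rather than formal manipulation. Given $x \in \partial A$ and $0 \neq v \in N_x(A)$ with witness $(U, \gamma_0)$, I approximate $x$ by points of $\Int A$. Since $x \in \clos{A}$, pick a sequence $x_n \in A$ with $x_n \to x$, and set $y_n = x_n + \varepsilon_n v$ with $\varepsilon_n \to 0^+$ small enough that $y_n \in U$. For $n$ large, any $y$ in a sufficiently small neighborhood $W_n \subset U$ of $y_n$ satisfies $y - x_n \in \gamma_0$ (since $\gamma_0$ is open and contains $\varepsilon_n v$), hence $y = x_n + (y - x_n) \in U \cap (U \cap A + \gamma_0) \subset A$, so $W_n \subset A$ and $y_n \in \Int A$. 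Thus $y_n \to x$ through $\Int A$, giving $x \in \clos{\Int A}$ and so $\clos{A} = \clos{\Int A}$. The second equality $\Int \clos{A} = \Int A$ follows by applying the first to $M \setminus A$: by (ii), $N_x(M \setminus A) = N_x(A)^a$ is nonempty on $\partial(M \setminus A) = \partial A$, so $\clos{M \setminus A} = \clos{\Int(M \setminus A)}$, which on taking complements reads $\Int A = \Int \clos{A}$.
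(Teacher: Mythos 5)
Your overall strategy (everything through the outgoing cone $D(A)=C(M\setminus A,A)$ and Proposition~\ref{prop:cones0}) is essentially the paper's, and items (ii)--(vi) and (viii)--(x) are fine. In item~(vii) you are in fact slightly more careful than the paper: you do not assume $x\in A$, and instead approximate $x$ by points $x_n\in A$ before pushing into the cone, which correctly handles $x\in\ol A\setminus A$. The complementation step for $\Int\ol A=\Int A$ is also correct.

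The one genuine gap is convexity in item~(i). Your plan is to take witnesses $(U_i,\gamma_{0,i})$ for $v_1$ and $v_2$, choose $\gamma_0\subset\gamma_{0,1}+\gamma_{0,2}$ around $v_1+v_2$, and ``apply the two witnesses in succession'' for $x'\in U\cap A$ and $v'=v'_1+v'_2\in\gamma_0$. But this requires that the intermediate point $x'+v'_1$ lie inside the witness neighborhood (say $U_1$), and nothing controls $\|v'_1\|$ in terms of $\|v'\|$: two large vectors $v'_1\in\gamma_{0,1}$, $v'_2\in\gamma_{0,2}$ pointing in nearly opposite directions can sum to a small $v'\in\gamma_0$, sending the intermediate point far outside $U$. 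The step is salvageable (shrink $\gamma_{0,1}$, $\gamma_{0,2}$ to narrow conical neighborhoods of $v_1$, $v_2$, then take $\gamma_0=\R_{>0}(V_1+V_2)$ for small convex $V_i\ni v_i$, which forces a controlled decomposition), but as written the ``routine local check'' is not routine and no bound is supplied. The paper sidesteps all of this by arguing contrapositively at the level of the Whitney cone: if $v+w\in D_x(A)$, take a witnessing sequence $(x_n,y_n,c_n)\in A\times(M\setminus A)\times\R_{>0}$ and consider the midpoints $z_n=x_n+(1/c_n)v$; infinitely many $z_n$ lie in $A$ (giving $w\in D_x(A)$ via $(z_n,y_n,c_n)$) or infinitely many lie in $M\setminus A$ (giving $v\in D_x(A)$ via $(x_n,z_n,c_n)$), so $v\in N_x(A)$ and $w\in N_x(A)$ force $v+w\in N_x(A)$. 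This is both shorter and avoids the decomposition issue entirely; I'd recommend adopting it.
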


\begin{proof}
(i)
The set $N(A)$ is an open cone by its definition.
Let us choose a chart at $x \in M$.
Let $v, w \in T_xM$ with $v+w \in D_x(A)$.
Then there is a sequence $\{(x_n,y_n,c_n)\}_n$ in $A\times(M\setminus A)\times\R_{>0}$ with $x_n\to[n]x$, $y_n\to[n]x$ and $c_n (y_n - x_n) \to[n] v+w$.
There are infinitely many $x_n + (1/c_n) v$ contained either in $A$ or in $M\setminus A$.
In the second case, $v \in D_x(A)$, and in the first case, $w \in D_x(A)$.
This shows that if both $v$ and $w$ belong to $N_x(A)$, then so does $v+w$.

\spa
(ii)--(iv) are obvious.

\spa
(v)
We have
\eqn
D(A_1 \cup A_2)&= &C(M \setminus (A_1 \cup A_2), A_1 \cup A_2)\\
&=& C((M \setminus A_1) \cap (M \setminus A_2), A_1) \cup C((M \setminus A_1) \cap (M \setminus A_2), A_2)\\
&\subset& C(M \setminus A_1, A_1) \cup C(M \setminus A_2, A_2)
= D(A_1) \cup D(A_2).
\eneqn
The second inclusion follows, using (ii).

\spa
(vi)
We have
\eqn
D(\clos{A}) &=& C(\clos{A}, M \setminus \clos{A}) \\
&=& C(A, M \setminus \clos{A}) \subset C(A, M \setminus A) = D(A).
\eneqn
This proves the first inclusion.
One deduces the second inclusion by using (ii).

\spa
(vii)
Let $x\in\partial A$ and assume that $N_x(A)\neq \varnothing$.
We shall prove that $x\in\ol{\Int A}$.
We choose a chart at $x$.
Let $v \in N_x(A)$.
Then there exist a neighborhood $V$ of $x$ and a conic open neighborhood $C$ of $v$ such that $V \cap (x + C) \subset A$.
Hence, there exists a sequence $\{ t_n\}_n$, $t_n>0$, $t_n \to[n] 0$ such that $x + t_n v \in \Int{A}$.
Therefore, $x \in \clos{\Int{A}}$.
The other inclusion follows, using (ii).

\spa
(viii)
By Proposition~\ref{prop:cones}~(v), using $\clos{M \setminus A} = M \setminus \Int{A}$, we get
\eqn
&&D(A) = C(M \setminus A, A) = C(\clos{M \setminus A}, \clos{A}) = C(M \setminus \Int{A}, \clos{A}).
\eneqn
Hence, $D(A)$ depends only on $\ol A$ and $\Int A$.

\spa
(ix)
By Proposition~\ref{prop:cones}~(vii), we have
\eqn
D(f^{-1}(A)) &=& C(M \setminus f^{-1}(A), f^{-1}(A))\\
&=& C(f^{-1}(L \setminus A), f^{-1}(A))\\
&\subset &\opb{Tf}(C(L \setminus A, A)) = \opb{Tf}(D(A)),
\eneqn
and the result follows.

\spa
(x)
Let $(v_1, v_2)\in T_{(x,y)}(M \times N)$.
We choose local charts centered at $x$ and $y$.
Then $(x, y; v_1, v_2)\in N(A) \times N(B)$ if and only if there exist neighborhoods $U$ of $x$, $V$ of $y$ and conic open neighborhoods $\gamma_1$ of $v_1$ and $\gamma_2$ of $v_2$, such that, setting $W=U\times V$
\eqn
&&W \cap \bl(W \cap A \times B) + \gamma_1 \times \gamma_2 \br \subset A \times B.
\eneqn
Therefore, $(v_1, v_2) \in N_{(x,y)}(A \times B)$.
\end{proof}

\begin{remark}
One could improve Proposition~\ref{prop:cones}~(x) and show that for $(x,y)\in\ol{A\times B}$, $N_{(x,y)}(A\times B)=N_x(A)\times N_y(B)$ but we do not use this result.
\end{remark}

\providecommand{\bysame}{\leavevmode\hbox to3em{\hrulefill}\thinspace}
\begin{bibdiv}
\begin{biblist}

\bib{BGP07}{book}{
 author={B{\"a}r, Christian},
 author={Ginoux, Nicolas},
 author={Pf{\"a}ffle, Frank},
 title={Wave equations on Lorentzian manifolds and quantization},
 series={ESI Lectures in Mathematics and Physics},
 publisher={European Mathematical Society (EMS), Z\"urich},
 date={2007},
 pages={viii+194},
% isbn={978-3-03719-037-1},
% review={\MR{2298021 (2008j:58041)}},
% doi={10.4171/037},
}

\bib{BEE96}{book}{
 author={Beem, John K.},
 author={Ehrlich, Paul E.},
 author={Easley, Kevin L.},
 title={Global Lorentzian geometry},
 series={Monographs and Textbooks in Pure and Applied Mathematics},
 volume={202},
 edition={2},
 publisher={Marcel Dekker, Inc., New York},
 date={1996},
 pages={xiv+635},
% isbn={0-8247-9324-2},
% review={\MR{1384756 (97f:53100)}},
}

\bib{BS05}{article}{
 author={Bernal, Antonio N.},
 author={S{\'a}nchez, Miguel},
 title={Smoothness of time functions and the metric splitting of globally
 hyperbolic spacetimes},
 journal={Comm. Math. Phys.},
 volume={257},
 date={2005},
 number={1},
 pages={43--50},
% issn={0010-3616},
% review={\MR{2163568 (2006g:53105)}},
% doi={10.1007/s00220-005-1346-1},
}

\bib{BoS73}{article}{
 author={Bony, Jean-Michel},
 author={Schapira, Pierre},
 title={Prolongement et existence des solutions des syst\`emes
 hyperboliques non stricts \`a coefficients analytiques},
 language={French},
 conference={
  title={Partial differential equations},
  address={Proc. Sympos. Pure Math., Vol. XXIII, Univ. California, Berkeley, Calif.},
  date={1971},
 },
 book={publisher={Amer. Math. Soc., Providence, R.I.},},
 date={1973},
 pages={85--95},
}

\bib{BF00}{article}{
 author={Brunetti, Bruno},
 author={Fredenhagen, Klaus},
 title={Microlocal Analysis and Interacting Quantum Field Theories: Renormalization on Physical Backgrounds},
 journal={Commun.Math.Phys.},
 date={2000},
 pages={623-661},
}

\bib{BF11}{article}{
 author={Brunetti, Bruno},
 author={Fredenhagen, Klaus},
 title={Algebraic approach to Quantum Field Theory},
 date={2011},
 eprint={arXiv:math-ph/0411072},
}	

\bib{DS98}{article}{
 author={D'Agnolo, Andrea},
 author={Schapira, Pierre},
 title={Global propagation on causal manifolds},
 note={Mikio Sato: a great Japanese mathematician of the twentieth
 century},
 journal={Asian J. Math.},
 volume={2},
 date={1998},
 number={4},
 pages={641--653},
% issn={1093-6106},
% review={\MR{1734124 (2000m:58036)}},
}
\bib{FS11}{article}{
 author={Fathi, Albert},
 author={Siconolfi, Antonio},
 title={On smooth time functions},
 journal={Mathematical Proceedings, CUP},
 %volume={11},
 date={2011},
 eprint={hal-00660452},
%pages={437--449},
% issn={0022-2488},
% review={\MR{0270697 (42 \#5585)}},
}

\bib{Ge70}{article}{
 author={Geroch, Robert},
 title={Domain of dependence},
 journal={J. Mathematical Phys.},
 volume={11},
 date={1970},
 pages={437--449},
% issn={0022-2488},
% review={\MR{0270697 (42 \#5585)}},
}

\bib{GS14}{article}{
 author={Guillermou, St\'ethane},
 author={Schapira, Pierre},
 title={Microlocal theory of sheaves and Tamarkin's non displaceability theorem},
 eprint={arXiv:1106.1576},
 journal={LN of the UMI},
 date={2014},
 pages={43--85},
}

\bib{HE73}{book}{
 author={Hawking, Stephen W.},
 author={Ellis, George F. R.},
 title={The large scale structure of space-time},
 note={Cambridge Monographs on Mathematical Physics, No. 1},
 publisher={Cambridge University Press, London-New York},
 date={1973},
 pages={xi+391},
% review={\MR{0424186 (54 \#12154)}},
}

\bib{Ka70} {book}{
 author={Kashiwara, Masaki},
 title={Algebraic study of systems of partial differential equations},
 note={Thesis, Tokyo 1970 (in Japanese)},
 series={Mem. Soc. Math. France},
 volume={63, 1995},
 date={1970},
}

\bib{Ka03}{book}{
 author={Kashiwara, Masaki},
 title={$D$-modules and microlocal calculus},
 series={Translations of Mathematical Monographs},
 volume={217},
 date={2003},
 pages={xvi+254},
}

\bib{KS90}{book}{
 author={Kashiwara, Masaki},
 author={Schapira, Pierre},
 title={Sheaves on manifolds},
 series={Grundlehren der Mathematischen Wissenschaften [Fundamental Principles of Mathematical Sciences]},
 volume={292},
 publisher={Springer-Verlag, Berlin},
 date={1990},
 pages={x+512},
}

\bib{Le53}{book}{
 author={Leray, Jean},
 title={Hyperbolic differential equations},
 publisher={The Institute for Advanced Study, Princeton, N. J.},
 date={1953},
 pages={240},
}

\bib{MS08}{article}{
 author={Minguzzi, Ettore},
 author={S{\'a}nchez, Miguel},
 title={The causal hierarchy of spacetimes},
 conference={
 title={Recent developments in pseudo-Riemannian geometry},
 },
 book={
 series={ESI Lect. Math. Phys.},
 publisher={Eur. Math. Soc., Z\"urich},
 },
 date={2008},
 pages={299--358},
}

\bib{SKK73}{article}{
 author={Sato, Mikio},
 author={Kawai, Takahiro},
 author={Kashiwara, Masaki},
 title={Microfunctions and pseudo-differential equations},
 conference={
 title={Hyperfunctions and pseudo-differential equations (Proc. Conf., Katata, 1971; dedicated to the memory of Andr\'e Martineau)},
 },
 book={
  publisher={Springer, Berlin},
 },
 date={1973},
 pages={265--529. Lecture Notes in Math., Vol. 287},
% review={\MR{0420735 (54 \#8747)}},
}

\bib{Sc13}{article}{
 author={Schapira, Pierre},
 title={Hyperbolic systems and propagation on causal manifolds},
 journal={Lett. Math. Phys.},
 volume={103},
 date={2013},
 number={10},
 pages={1149--1164},
  eprint={arXiv:1305.3535},
}

\bib{SW96}{article}{
 author={Sorkin, Rafael D.},
 author={Woolgar, Eric},
 title={A causal order for spacetimes with $C^0$ Lorentzian metrics: proof of compactness of the space of causal curves},
 journal={Classical Quantum Gravity},
 volume={13},
 date={1996},
 number={7},
 pages={1971--1993},
}

\end{biblist}
\end{bibdiv}

\vspace*{1cm}
\noindent
\parbox[t]{21em}
{\scriptsize{

Beno{\^i}t Jubin\\
Sorbonne Universit{\'e}s, UPMC Univ Paris 6\\
Institut de Math{\'e}matiques de Jussieu\\
e-mail: benoit.jubin@imj-prg.fr\\

\vspace{1ex}
\noindent
Pierre Schapira\\
Sorbonne Universit{\'e}s, UPMC Univ Paris 6\\
Institut de Math{\'e}matiques de Jussieu\\
e-mail: pierre.schapira@imj-prg.fr\\
http://www.math.jussieu.fr/\textasciitilde schapira/
}}
\end{document}